\newtheorem{defn}{Definition}[section]
\newtheorem{theo}[defn]{Theorem}
\newtheorem{lem}[defn]{Lemma}
\newtheorem{prop}[defn]{Proposition}
\newtheorem{cor}[defn]{Corollary}
\newtheorem{rem}[defn]{Remark}
\newtheorem{exam}[defn]{Example}
\newenvironment{proof}{{\bf Proof }}{{\vskip 0.1cm \hfill$\Box$}}
\begin{document} 

\noindent
{\Large \bf Local elliptic regularity for solutions to stationary Fokker-Planck equations via Dirichlet forms and resolvents}
\\ \\
\bigskip
\noindent
{\bf Haesung Lee}  \\
\noindent
{\bf Abstract.}  
In this paper, we show that, for a solution to the stationary Fokker-Planck equation with general coefficients, defined as a measure with an $L^2$-density, this density not only exhibits $H^{1,2}$-regularity but also H\"{o}lder continuity.
To achieve this, we first construct a reference measure $\mu=\rho dx$ by utilizing existence and elliptic regularity results, ensuring that the given divergence-type operator corresponds to a sectorial Dirichlet form. By employing elliptic regularity results for homogeneous boundary value problems in both divergence and non-divergence type equations, we demonstrate that the image of the resolvent operator associated with the sectorial Dirichlet form has $H^{2,2}$-regularity. Furthermore, through calculations based on the Dirichlet form and the $H^{2,2}$-regularity of the resolvent operator, we prove that the density of the solution measure for the stationary Fokker-Planck equation is, indeed, the weak limit of $H^{1,2}$-functions defined via the resolvent operator. Our results highlight the central role of Dirichlet form theory and resolvent approximations in establishing the regularity of solutions to stationary Fokker-Planck equations with general coefficients.
\\ \\
\noindent
{Mathematics Subject Classification (2020): {Primary: 35B65, 35Q84, 31C25, Secondary: 60J60, 60J35,  35D30}}\\

\noindent 
{Keywords: Stationary Fokker-Planck equations, infinitesimally invariant measures, elliptic regularity, Dirichlet forms, resolvents, weak solutions
}

\section{Introduction} 

The connection between stochastic differential equations (SDEs) and partial differential equations (PDEs) is often established through the Fokker-Planck equations. To explain this connection, consider a diffusion process  
\[
\mathbb{M}=(\Omega, \mathcal{F}, (\mathbb{P}_x)_{x \in \mathbb{R}^d}, (\mathcal{F}_t)_{t \geq 0}, (X_t)_{t \geq 0}),
\]  
satisfying the following It\^{o}-stochastic differential equation: for each $x \in \mathbb{R}^d$,
\[
X_t = x + \int_0^t \sigma(X_s) \, dW_s + \int_0^t \mathbf{G}(X_s) \, ds, \quad 0 \leq t < \infty, \quad \mathbb{P}_x\text{-a.s.},
\]  
where \( (W_t)_{t \geq 0} \) is a standard \( (\mathcal{F}_t)_{t \geq 0} \)-Brownian motion, \( \sigma = (\sigma_{ij})_{1 \leq i,j \leq d} \) is a \( d \times d \) matrix of functions, and \( \mathbf{G} \) is a vector field. Here, both \( \sigma \) and \( \mathbf{G} \) are assumed to satisfy certain regularity and growth conditions. If we additionally assume that the one-dimensional marginal distribution of \( (X_t)_{t > 0} \) under \( \mathbb{P}_x \) has a density \( (p^x_t(y))_{t > 0} \) with respect to \( dy \), i.e.
\[
\quad \mathbb{P}_x(X_t \in E) = \int_{E} p_t^x(y) \, dy, \quad \text{for any } E \in \mathcal{B}(\mathbb{R}^d),\, t>0, \,x \in \mathbb{R}^d,
\]
then it can be derived by It\^{o}'s formula and some calculations that, for any \( f \in C_0^{\infty}(\mathbb{R}^d) \) and \( g \in C_0^{\infty}((0, \infty)) \),
\begin{equation} \label{fokplanck}
\int_0^{\infty}\int_{\mathbb{R}^d} \Big(\partial_t (f(y)g(t)) + \frac12 \text{trace} (A \nabla^2 (f(y)g(t))) + \Big\langle \mathbf{G}, \nabla (f(y)g(t)) \Big\rangle \Big) p_t^x(y) \, dy \, dt = 0,
\end{equation}
where \( A =(a_{ij})_{1 \leq i,j \leq d}:= \sigma \sigma^T \). In that case, \( (p^x_t(y) \, dy \, dt)_{t > 0} \) is called a solution to the Fokker-Planck equation. Various analytic properties of solutions to Fokker-Planck equations with rough coefficients are systematically studied in \cite{BKR09, BKRS15, BRS23}. Furthermore, one can investigate the stationary counterpart to the above Fokker-Planck equation. For instance, if \( \nu \) is an invariant probability measure for \( \mathbb{M} \) (cf. \cite[Definition 3.43]{LST22}), i.e. \( \nu \) is a probability measure on \( \mathcal{B}(\mathbb{R}^d) \) satisfying that
\[
\int_{\mathbb{R}^d} \int_{\mathbb{R}^d} p_t^x(y) f(y) \, dy \, \nu(dx) = \int_{\mathbb{R}^d} f(x) \, \nu(dx), \quad \text{for all } f \in C_0^{\infty}(\mathbb{R}^d),
\]
then integrating \eqref{fokplanck} with respect to \( \nu(dx) \) and applying Fubini's theorem, we obtain that
\[
\int_{\mathbb{R}^d} Lf \, d\nu = 0, \quad \text{for all } f \in C_0^{\infty}(\mathbb{R}^d),
\]
where
\[
Lf := \frac12 \text{trace}(A \nabla^2 f)  + \langle \mathbf{G},  \nabla f \rangle, \quad f \in C_0^{\infty}(\mathbb{R}^d).
\]
In this regard, \( \nu \) is referred to as a solution to the stationary Fokker-Planck equation or an infinitesimally invariant measure (cf. \cite{LT22}), and formally it is written as \( L^* \nu = 0 \) (note that for the equation to make sense, it is required that $\|\mathbf{G}\|+|c| \in L^1_{loc}(\mathbb{R}^d, \nu)$).
Concerning this type of stationary Fokker-Planck equations, extensive studies have been conducted by \cite{BKR09, BRS12, BKRS15, BS17, B18, BRS23f}.\\
When appropriate regularity conditions are fulfilled for the diffusion process $\mathbb{M}$, the invariant probability measure $\nu$ can correspond to the limiting probability measure. In other words, the measure $\nu$ is represented as the limiting measure of the one-dimensional marginal distribution $p_t^x(y) \, dy$ as \( t \to \infty \) (see \cite[Section 3.2.3]{LST22}). This property enables invariant measures to provide a deterministic framework for analyzing random phenomena, making them highly applicable across various fields. Recently, they have been actively utilized in applications such as Markov Chain Monte Carlo (MCMC) (cf. \cite{HHS05,MCF15}) and image generation (cf. \cite{S19, SS21}). \\
If the existence and uniqueness of a solution $\nu$ to the stationary Fokker-Planck equation are ensured, one can further study precise numerical computations to approximate $\nu$. From a probabilistic perspective, a priori admissible regularity of $\nu$ is sufficiently described by its absolute continuity with respect to the Lebesgue measure, or more generally, by considering $\nu$ as a locally finite measure. However, such a level of low regularity is inadequate from a numerical standpoint. Specifically, when approximating solutions using Galerkin methods, it is natural to require the solution to exhibit at least local $H^{1,2}$-regularity (cf. \cite{BS08}). 
This requirement is not limited to classical numerical methods, but is also necessary for recent numerical methods, such as physics-informed neural networks (see \cite{YL24} and references therein). In recent approaches involving physics-informed neural networks, where trial functions are constructed using neural networks, the continuity of the true solution is expected to play an important role in ensuring the stability of error analysis, particularly from the perspective of the universal approximation theorem (cf. \cite{Cy89, HSW89}). Consequently, determining whether the density of $\nu$ with respect to the Lebesgue measure belongs to $H^{1,2}_{\text{loc}}(\mathbb{R}^d) \cap C(\mathbb{R}^d)$ emerges as a mathematically significant problem. \\
It is well known that if the coefficients of $L$ and $c$ are all smooth and $\text{det}(A)>0$, then any $\nu$ satisfying $(L+c)^* \nu = 0$ must admit a smooth density with respect to $dx$, as established in \cite[Chapter III]{Ta81}, which is considered as a generalization of Weyl's lemma (\cite[Corollary 2.2.1]{Jo13}).
On the other hand, when the coefficients of $L$ and $c$ are locally H\"{o}lder continuous and $\det(A) > 0$, any $\nu$ satisfying $(L+c)^* \nu = 0$ is guaranteed to have a locally H\"{o}lder continuous density with respect to $dx$, as verified in \cite{S73}. Recent results ensuring the continuity of the density in cases with more singular drifts and zero-order terms have been extensively studied by V.I. Bogachev and S.V. Shaposhnikov. Precisely, in \cite[Theorem 3.1]{BS17}, it was established that when the components of $A$ are locally H\"{o}lder continuous, $\det(A) > 0$, $\mathbf{G} \in L^p_{\text{loc}}(\mathbb{R}^d, \mathbb{R}^d)$, and $c \in L^p_{\text{loc}}(\mathbb{R}^d)$, the density of $\nu$ satisfying $(L + c)^* \nu = 0$ is locally H\"{o}lder continuous. Moreover, the same theorem demonstrates that the continuity of the density of $\nu$ remains robust even when the components of $A$ are relaxed to being Dini-continuous. \\
Meanwhile, various results regarding the local \( H^{1,2} \)-regularity of the density of \(\nu\) have been also presented. It has been shown in \cite{F77} that if the components of \(A\) are locally Lipschitz continuous, \( \text{det}(A) > 0 \), and \( c \in L^2_{\text{loc}}(\mathbb{R}^d) \), then any \(\nu=hdx\) with $h \in L^2_{loc}(\mathbb{R}^d)$ satisfying $\big(-\text{div}(A \nabla) + c\big)^* \nu = 0$ always possesses a density $h \in  H^{1,2}_{\text{loc}}(\mathbb{R}^d)$. Indeed, this local \( H^{1,2} \)-regularity result is motivated by the purpose of demonstrating the essential self-adjointness of the corresponding operator. From this perspective, \cite{BKR97} study the case where  $A =id$, establishing the general Sobolev regularity of the solution \( \nu \) and essential self-adjointness of the corresponding Dirichlet operators. Precisely, it is shown in \cite[Theorem 1]{BKR97} that if \( A = id \), \( \mathbf{G} \in L^p_{\text{loc}}(\mathbb{R}^d, \mathbb{R}^d) \), and \( c \in L^{\frac{pd}{p+d}}_{loc}(\mathbb{R}^d) \) with \( p \in (d, \infty) \), then \(\nu\) which is a solution to $(L + c)^* \nu = 0$ has a density in \( H^{1,p}_{\text{loc}}(\mathbb{R}^d) \cap C_{loc}^{0, 1-d/p}(\mathbb{R}^d) \) with respect to the Lebesgue measure. Subsequently, this result is generalized to the case where the components of \( A \) satisfy \( H^{1,p}_{\text{loc}}(\mathbb{R}^d) \) with \( p \in (d, \infty) \) and \( \text{det}(A) > 0 \) and detailed results on this generalization are provided in \cite{BKR01, BKR09, BKRS15}. As additional references, we refer to \cite{KO23, DKK} for recent results on the regularity of very weak solutions in the double divergence form, which is another name for the stationary Fokker-Planck equations.
\\
Meanwhile, a novel approach to show the $H^{1,2}$-regularity of the density of $\nu$ which is a solution to $L^* \nu=0$ was introduced by W. Stannat. Through the results of \cite[Theorem 2.1]{St99}(cf. \cite[Theorem 2.20]{LST22}), it is shown that if the components of $A$ are locally H\"{o}lder continuous, \(\mathbf{G}\) merely satisfies \(L^2_{\text{loc}}(\mathbb{R}^d, \mathbb{R}^d)\), and the density of \(\nu\), which satisfies \(L^* \nu = 0\), is assumed to be a priori in \(L^{\infty}(\mathbb{R}^d)\), then the density of \(\nu\) with respect to the Lebesgue measure is shown to belong to \(H^{1,2}_{\text{loc}}(\mathbb{R}^d)\). The main idea employed in this approach is to construct a resolvent operator $(V_{\alpha})_{\alpha > 0}$ corresponding to the non-divergence type operator $\sum_{i,j=1}^d a_{ij} \partial_{i} \partial_j$ and to demonstrate, via the Banach-Alaoglu theorem, that for $\nu = h \, dx$ with $L^* \nu=0$, the function $h$ is indeed a local $H^{1,2}$-weak limit of $(\alpha V_{\alpha} h)_{\alpha > 0}$ as $\alpha \rightarrow \infty$. This idea was subsequently inherited in \cite{L23ID} via the Dirichlet form theory. Indeed, it was shown in \cite[Theorem 3.3]{L23ID} that if $A = id$ and $\mathbf{G} = \frac{1}{\rho} \nabla \rho$ with $d \geq 3$, where $\rho$ is locally bounded below and above by strictly positive constants and $\nabla \rho \in L^d_{\mathrm{loc}}(\mathbb{R}^d, \mathbb{R}^d)$, then for any $\nu = h \, dx$ satisfying $L^* \nu = 0$ with $h \in L^2_{\mathrm{loc}}(\mathbb{R}^d)$, it follows from resolvent approximations that $h \in H^{1,2}_{\mathrm{loc}}(\mathbb{R}^d) \cap C(\mathbb{R}^d)$. Building upon the above results, we now introduce our main results as follows:
\begin{theo} \label{intromainth}
Let $U$ be a (possibly unbounded) open subset of $\mathbb{R}^d$ with $d \geq 3$. Let $\mathbf{H} \in L^p_{loc}(U, \mathbb{R}^d)$ with $p \in (d, \infty)$, and let $A = (a_{ij})_{1 \leq i,j \leq d}$ be a (possibly non-symmetric) matrix of functions, where $a_{ij} \in VMO_{loc}(U)$ for all $1 \leq i,j \leq d$ (see Definition \ref{defnvmo}), and ${\rm div} A \in L^d_{loc}(U, \mathbb{R}^d)$ (see Definition \ref{basidefn}). Assume that $A$ is locally uniformly strictly elliptic and bounded on $U$, i.e.  for each open ball $V$ with $\overline{V} \subset U$ there exist strictly positive constants $\lambda_V$ and $M_V$ such that
\begin{equation} \label{locellst}
\lambda_V \| \xi \|^2 \leq \langle A(x) \xi, \xi \rangle, \quad \max_{1 \leq i,j \leq d} |a_{ij}(x)| \leq M_V, \quad \text{ for a.e. $x \in V$ and for all $\xi \in \mathbb{R}^d$}. 
\end{equation}
As in Proposition \ref{convdivnon}, consider a partial differential operator $(\mathcal{L}, C_0^{\infty}(U))$ given by
\begin{align*}
\mathcal{L} f &= \text{\rm div} (A \nabla f) + \langle \mathbf{H}, \nabla f \rangle \nonumber \\
&= \text{\rm trace}(A \nabla^2  f) + \langle \text{\rm div} A + \mathbf{H}, \nabla f \rangle, \quad f \in C^{\infty}_0(U). 
\end{align*}
Let $c \in L^d_{loc}(U)$, $\mathbf{F} \in L^q_{loc}(U, \mathbb{R}^d)$ with $q \in [2, \infty)$ and $f \in L^{\frac{qd}{q+d}}_{loc}(U)$.
Assume that $h \in L^2_{loc}(U)$ satisfies
\begin{equation} \label{basicintideneq}
\int_{U} \big(\mathcal{L} \varphi +c\varphi \big) \, h dx = \int_{U} f \varphi dx   + \int_{U} \langle \mathbf{F}, \nabla \varphi \rangle dx , \quad \text{ for all $\varphi \in C_0^{\infty}(U)$}.
\end{equation}
Then, $h \in H^{1,p \wedge q}_{loc}(U)$. In particular, if $q \in (d, \infty)$, then $h \in C^{0, \beta}_{loc}(U)$ with $\beta=1-\frac{d}{p \wedge q}$ by the Sobolev embedding.
\end{theo}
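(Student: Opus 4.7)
The plan is to localize the problem, build a suitable reference measure $\mu=\rho\,dx$ so that $\mathcal{L}$ becomes the generator of a sectorial Dirichlet form, and then exploit the smoothing properties of the associated resolvent to show that $h$ is the weak $H^{1,2}$-limit of its resolvent approximations. Fix concentric open balls $V\Subset V'\Subset U$. On $V'$ I would construct a strictly positive density $\rho$, bounded and bounded away from zero and of sufficient Sobolev regularity, by solving the homogeneous Dirichlet boundary value problem for the formal adjoint $\mathcal{L}^*$ and invoking elliptic regularity for divergence-form equations with VMO coefficients. Proposition~\ref{convdivnon} then places $\mathcal{L}$ in the framework of a sectorial Dirichlet form $(\mathcal{E},D(\mathcal{E}))$ on $L^2(V',\rho\,dx)$, whose resolvent $(G_\alpha)_{\alpha>0}$ will be the main analytic tool.

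The second step is to establish that $G_\alpha$ genuinely improves regularity: for any $g\in L^2(V',\rho\,dx)$ one has $G_\alpha g\in H^{2,2}_{loc}(V')$. Indeed, the resolvent identity $(\alpha-\mathcal{L})G_\alpha g=g$ can be rewritten in non-divergence form
\[
\mathrm{trace}(A\nabla^2 G_\alpha g)+\langle \mathrm{div}\,A+\mathbf{H},\nabla G_\alpha g\rangle=\alpha G_\alpha g - g,
\]
and the VMO assumption on $a_{ij}$, together with $\mathrm{div}\,A\in L^d_{loc}(U,\mathbb{R}^d)$ and $\mathbf{H}\in L^p_{loc}(U,\mathbb{R}^d)$ with $p>d$, places us in the Chiarenza--Frasca--Longo $W^{2,2}$-theory, whose interior estimates give $G_\alpha g\in H^{2,2}_{loc}(V')$ once the preliminary $H^{1,2}$-bound built into $D(\mathcal{E})$ is used to control the lower-order terms.

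For the given $h\in L^2_{loc}(U)$, choose a cutoff $\chi\in C_0^\infty(V')$ with $\chi\equiv 1$ on $V$ and set $u_\alpha:=\alpha G_\alpha(\chi h)\in H^{2,2}_{loc}(V')$. By standard Dirichlet form theory, $u_\alpha\to \chi h$ strongly in $L^2(V',\rho\,dx)$ as $\alpha\to\infty$. The crucial point is a uniform bound $\|u_\alpha\|_{H^{1,2}(V)}\le C$. After a density/mollification argument exploiting the interior $H^{2,2}$-regularity to extend the identity \eqref{basicintideneq} to test functions of the form $\zeta\, u_\alpha$ (with $\zeta\in C_0^\infty(V')$, $\zeta\equiv 1$ on $V$), I would substitute $\mathcal{L} u_\alpha = \alpha u_\alpha - \alpha\chi h$ via the resolvent equation, pair the result with the coercivity of the symmetric part of $\mathcal{E}$, and estimate the cutoff commutators and lower-order contributions from $c,f,\mathbf{F}$ by Cauchy--Schwarz and the sector condition. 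The output should be an $\alpha$-uniform $H^{1,2}(V)$-bound, after which Banach--Alaoglu furnishes a weakly convergent subsequence whose strong $L^2$-limit is $\chi h$. Hence $h\in H^{1,2}_{loc}(U)$.

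With $h\in H^{1,2}_{loc}(U)$ in hand, integration by parts in \eqref{basicintideneq} recasts the identity as the distributional divergence-form equation
\[
\operatorname{div}\bigl(A^T\nabla h - \mathbf{H}h + \mathbf{F}\bigr) = f - ch.
\]
Since $d\ge 3$ and $\mathbf{H}\in L^p_{loc}$ with $p>d$, a Moser iteration yields $h\in L^\infty_{loc}(U)$, hence $\mathbf{H}h\in L^p_{loc}$ and $ch\in L^d_{loc}$. The classical $L^q$-theory for linear divergence-form equations with VMO principal part then upgrades the regularity to $h\in H^{1,p\wedge q}_{loc}(U)$, and Morrey's embedding delivers the H\"older continuity when $q>d$. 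I expect the main obstacle to lie in the third step: tracking the commutator errors produced by the cutoff $\chi$ and by the antisymmetric part of $\mathcal{E}$, and showing that they can be absorbed by the symmetric coercive part together with the assumed integrabilities of the data, so that the interior $H^{2,2}$-regularity of $u_\alpha$ on $V'$ converts into a genuine gradient estimate on $V$ using only information on $V'$.
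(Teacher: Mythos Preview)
Your proposal follows the same strategic arc as the paper: construct a reference density $\rho$ so that $\mathcal{L}$ becomes the generator of a sectorial Dirichlet form, use the $H^{2,2}$-smoothing of the associated resolvent together with the equation for $h$ to obtain an $\alpha$-uniform $H^{1,2}$ bound on the resolvent approximants, pass to a weak limit via Banach--Alaoglu, and then bootstrap to $H^{1,p\wedge q}_{loc}$ through the divergence-form equation obtained by integration by parts.

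A few tactical differences are worth flagging. First, the paper constructs $\rho$ globally on $\mathbb{R}^d$ (after extending $A$ by a cutoff) as an infinitesimally invariant measure, not via a local Dirichlet problem; this avoids having to argue strict positivity separately. Second, the paper applies the resolvent to $h$ itself on a ball $B$ and only afterwards localizes, working with $\chi\,\alpha G_\alpha^B h$ rather than your $\alpha G_\alpha(\chi h)$; this order is what makes the key inequality $-\alpha\int(\alpha G_\alpha^B h-h)\,\alpha G_\alpha^B h\,\chi^2\,d\mu\le -\alpha\int(\alpha G_\alpha^B h-h)\,h\,\chi^2\,d\mu$ available, after which the right-hand side is controlled by testing the PDE with $\chi^2\alpha G_\alpha^B h$. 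Your ordering can be made to work as well, via the analogous inequality $\mathcal{E}(u_\alpha,u_\alpha)\le\alpha(\chi h-u_\alpha,\chi h)_\mu$, but you should be aware that the computation is organized differently. Third, the paper establishes global-on-the-ball regularity $G_\alpha^B f\in H^{2,2}(B)\cap H^{1,2d/(d-2)}_0(B)$ (not just interior), combining the divergence-form $L^p$ theory of Kang--Kim with the non-divergence $W^{2,2}$ theory of Chiarenza--Frasca--Longo and a weak maximum principle; this is what allows $\chi\,\alpha G_\alpha^B h$ to be used directly as a test function in the PDE. Finally, for the upgrade step the paper applies \cite[Theorem 1.8.3]{BKRS15} directly to the divergence-form equation for $h$; your explicit Moser iteration to $L^\infty_{loc}$ before invoking VMO $L^q$-theory is a valid alternative route to the same end.
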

\centerline{}
\noindent
The main idea for the proof of Theorem \ref{intromainth} is to find a suitable reference measure $\mu=\rho dx$ (see Theorem \ref{exisinfinvar}), so that
we can convert the divergence type operator $\mathcal{L}$ to $L^0+ \langle \mathbf{B}, \nabla \rangle$ (see \eqref{mathcallrep}), where $L^0$ is an operator associated to a sectorial Dirichlet form on $L^2(B, \mu)$ (see \eqref{secdirich}), $B$ is an open ball and $\mathbf{B}$ is a weakly divergence-free vector field with respect to $\mu$. As a result, due to the sufficient integrability of $\mathbf{B}$, the sectorial Dirichlet form $(\mathcal{E}^B, D(\mathcal{E}^B))$ corresponding to $L^0 + \langle \mathbf{B}, \nabla \rangle$ can be constructed as the closure of \eqref{predirici} in $L^2(B, \mu)$. Ultimately, by leveraging the corresponding resolvent through Dirichlet form theory and elliptic regularity results for homogeneous boundary value problems in Theorem \ref{resolregul}, we inherit the original idea of \cite[Theorem 3.3]{St99} and, through detailed computations, obtain the main result, Theorem \ref{intromainth}. Ultimately, by resolvent approximations, we demonstrate that \( h \) is indeed the $ H^{1,2}$-weak limit of a subsequence of $(\alpha G_{\alpha}^B h)_{\alpha > 0}$ as $\alpha \to 0^+$, where $(G^B_{\alpha})_{\alpha > 0}$ denotes the resolvent associated with the Dirichlet form $(\mathcal{E}^B, D(\mathcal{E}^B))$. As a direct consequence of Theorem \ref{intromainth}, we present the following local elliptic regularity result for non-divergence type operators:
\centerline{}
\begin{cor} \label{directcorol}
Let $U$ be a (possibly unbounded) open subset of $\mathbb{R}^d$ with $d \geq 3$. Let $\tilde{\mathbf{H}} \in L^p_{loc}(U, \mathbb{R}^d)$ with $p \in (d, \infty)$, $c \in L^d_{loc}(U)$, and let $A = (a_{ij})_{1 \leq i,j \leq d}$ be a (possibly non-symmetric) matrix of functions where $a_{ij} \in VMO_{loc}(U)$ for all $1 \leq i,j \leq d$ (see Definition \ref{defnvmo}), and ${\rm div} A \in L^p_{loc}(U, \mathbb{R}^d)$ (see Definition \ref{basidefn}). Assume that $A$ is locally uniformly strictly elliptic and bounded on $U$, i.e. \eqref{locellst} holds. Let $\mathbf{F} \in L^q_{loc}(U, \mathbb{R}^d)$ with $q \in [2, \infty)$, and $f \in L^{\frac{qd}{q+d}}_{loc}(U)$. Assume that $h \in L^2_{loc}(U)$ satisfies
$$
\int_{U} \Big( {\rm trace}(A \nabla^2 \varphi) + \langle \tilde{\mathbf{H}}, \nabla \varphi \rangle + c \varphi \Big) \, h \, dx = \int_{U} f \varphi \, dx + \int_{U} \langle \mathbf{F}, \nabla \varphi \rangle \, dx , \quad \text{for all $\varphi \in C_0^{\infty}(U)$}.
$$
Then, $h \in H^{1,p \wedge q}_{loc}(U)$. In particular, if $q \in (d, \infty)$, then $h \in C^{0, \beta}_{loc}(U)$ with $\beta=1-\frac{d}{p \wedge q}$ by the Sobolev embedding.
\end{cor}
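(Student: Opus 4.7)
The plan is to reduce Corollary \ref{directcorol} directly to Theorem \ref{intromainth} by rewriting the non-divergence form operator as a divergence form one, exploiting the additional hypothesis that $\text{div} A \in L^p_{loc}(U, \mathbb{R}^d)$ in the corollary (which is strictly stronger than the $L^d_{loc}$ assumption of the theorem, since $p>d$). The key observation is the elementary identity, valid on $C_0^\infty(U)$ and already recorded in the statement of Theorem \ref{intromainth},
\[
\text{div}(A \nabla \varphi) = \text{trace}(A \nabla^2 \varphi) + \langle \text{div} A, \nabla \varphi \rangle,
\]
which, after rearrangement, gives
\[
\text{trace}(A \nabla^2 \varphi) + \langle \tilde{\mathbf{H}}, \nabla \varphi \rangle = \text{div}(A \nabla \varphi) + \langle \tilde{\mathbf{H}} - \text{div} A, \nabla \varphi \rangle.
\]

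Next, I would set $\mathbf{H} := \tilde{\mathbf{H}} - \text{div} A$. Since $\tilde{\mathbf{H}} \in L^p_{loc}(U, \mathbb{R}^d)$ and, by the new hypothesis, $\text{div} A \in L^p_{loc}(U, \mathbb{R}^d)$, it follows that $\mathbf{H} \in L^p_{loc}(U, \mathbb{R}^d)$. Moreover, $p > d$ automatically gives $\text{div} A \in L^d_{loc}(U, \mathbb{R}^d)$, so with this choice of $\mathbf{H}$ the divergence-type operator
\[
\mathcal{L} \varphi := \text{div}(A \nabla \varphi) + \langle \mathbf{H}, \nabla \varphi \rangle
\]
satisfies all the structural assumptions of Theorem \ref{intromainth}; the VMO, ellipticity, and boundedness assumptions on $A$, as well as the integrability assumptions on $c$, $f$, and $\mathbf{F}$, are carried over verbatim from one statement to the other. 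The integral identity postulated in the corollary then reads
\[
\int_U \big( \mathcal{L} \varphi + c \varphi \big) h \, dx = \int_U f \varphi \, dx + \int_U \langle \mathbf{F}, \nabla \varphi \rangle \, dx, \qquad \text{for all } \varphi \in C_0^\infty(U),
\]
which is exactly \eqref{basicintideneq}. Applying Theorem \ref{intromainth} directly yields $h \in H^{1, p \wedge q}_{loc}(U)$, and when $q \in (d, \infty)$ the standard Sobolev embedding $H^{1,r}_{loc} \hookrightarrow C^{0,1-d/r}_{loc}$ with $r = p \wedge q > d$ delivers the claimed Hölder regularity with exponent $\beta = 1 - d/(p \wedge q)$.

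There is no real obstacle beyond this bookkeeping, because the substantive analytic work is already carried out in Theorem \ref{intromainth}. The only conceptual point worth emphasizing is that the strengthening of the hypothesis from $\text{div} A \in L^d_{loc}$ (in the theorem) to $\text{div} A \in L^p_{loc}$ (in the corollary) is precisely what the reduction requires: it is exactly this stronger integrability that ensures $\mathbf{H} = \tilde{\mathbf{H}} - \text{div} A$ remains in $L^p_{loc}(U, \mathbb{R}^d)$, so that the sharp exponent $p \wedge q$ survives the passage from non-divergence to divergence form without being degraded to $d \wedge q$.
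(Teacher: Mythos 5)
Your proposal is correct and coincides with the paper's own proof: the paper likewise sets $\mathbf{H} := \tilde{\mathbf{H}} - \operatorname{div} A$, notes that $\mathbf{H} \in L^p_{loc}(U,\mathbb{R}^d)$ thanks to the strengthened hypothesis $\operatorname{div} A \in L^p_{loc}(U,\mathbb{R}^d)$, rewrites the non-divergence operator in divergence form via Proposition \ref{convdivnon}, and invokes Theorem \ref{intromainth}. No gaps.
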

\centerline{}
Combining Corollary \ref{directcorol} and \cite[Theorem 2.1]{BS17}, we directly obtain the following regularity result for general signed measure $\nu$:
\begin{cor} \label{finmaincor}
Let $U$ be a (possibly unbounded) open subset of $\mathbb{R}^d$ with $d \geq 3$. Let $\tilde{\mathbf{H}} \in L^p_{loc}(U, \mathbb{R}^d)$, $c \in L^p_{loc}(U)$ with $p \in (d, \infty)$, and let $A = (a_{ij})_{1 \leq i,j \leq d}$ be a (possibly non-symmetric) matrix of functions where $a_{ij} \in VMO_{loc}(U)$ for all $1 \leq i,j \leq d$ (see Definition \ref{defnvmo}), and ${\rm div} A \in L^p_{loc}(U, \mathbb{R}^d)$ (see Definition \ref{basidefn}). Assume that $A$ is locally uniformly strictly elliptic and bounded on $U$, i.e. \eqref{locellst} holds. Let $\nu$ be a locally finite signed measure satisfying that $\|\tilde{\mathbf{H}} \|  + |c| \in L^1_{loc}(U, \nu)$ and
$$
\int_{U} \Big( {\rm trace}(A \nabla^2 \varphi) + \langle \tilde{\mathbf{H}}, \nabla \varphi \rangle + c \varphi \Big) \,d\nu = 0, \quad \text{for all $\varphi \in C_0^{\infty}(U)$}.
$$
Then, there exists $h \in H^{1,p}_{loc}(U) \cap C^{0, 1-d/p}_{loc}(U)$ such that $\nu=h dx$.
\end{cor}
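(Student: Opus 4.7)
The plan is to combine the two cited results in the natural two-step manner. First, I would apply \cite[Theorem 2.1]{BS17} to deduce that the signed measure $\nu$ is absolutely continuous with respect to Lebesgue measure, with a Lebesgue density $h$ belonging at least to $L^2_{loc}(U)$. Second, I would feed $h$ into Corollary \ref{directcorol} with vanishing source terms and read off the desired $H^{1,p}_{loc} \cap C^{0,1-d/p}_{loc}$ regularity.

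For the first step, I would fix an arbitrary open ball $V$ with $\overline{V} \subset U$ and work on $V$. There, the matrix $A$ has VMO entries, is uniformly elliptic and bounded, and $\text{div}\,A \in L^p_{loc}$; moreover $\tilde{\mathbf{H}} \in L^p_{loc}$ and $c \in L^p_{loc}$ with $p > d$, and by hypothesis $\|\tilde{\mathbf{H}}\| + |c| \in L^1_{loc}(U, \nu)$. These are precisely the integrability and ellipticity requirements under which \cite[Theorem 2.1]{BS17} asserts the existence of a Lebesgue density $h_V$ for $\nu|_V$, lying in a sufficiently integrable local class. Patching the $h_V$ across an exhaustion of $U$ by such balls (they agree a.e.\ on overlaps by uniqueness of Radon--Nikod\'ym derivatives) produces a single $h \in L^2_{loc}(U)$ with $\nu = h\,dx$.

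For the second step, the identity in the hypothesis of the corollary rewrites as
$$\int_{U} \big( \text{trace}(A \nabla^2 \varphi) + \langle \tilde{\mathbf{H}}, \nabla \varphi \rangle + c \varphi \big)\, h \, dx = 0, \quad \varphi \in C_0^\infty(U),$$
which is exactly the equation appearing in Corollary \ref{directcorol} with $f \equiv 0$ and $\mathbf{F} \equiv 0$. Since the zero source terms lie vacuously in every local Lebesgue class, I am free to choose $q = p \in (d, \infty)$, so that $p \wedge q = p$. Corollary \ref{directcorol} then yields $h \in H^{1,p}_{loc}(U)$, and the Sobolev embedding included in its statement gives $h \in C^{0, 1-d/p}_{loc}(U)$, completing the proof.

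The main obstacle, essentially the only nontrivial step, is the application of \cite[Theorem 2.1]{BS17}: one must verify that it indeed delivers a density of class $L^2_{loc}$ under the present VMO assumption on $A$ (rather than local H\"older continuity as in \cite[Theorem 3.1]{BS17}). Should that theorem only provide a density in some weaker class $L^r_{loc}$ with $r < 2$, a short preliminary bootstrap using interior $L^r$-estimates for elliptic operators with VMO coefficients would be needed to push the integrability up to $L^2_{loc}$ before Corollary \ref{directcorol} can be invoked. Once that threshold is reached, the remainder of the argument is a direct substitution into a theorem already proved.
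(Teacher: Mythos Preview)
Your proposal is correct and follows essentially the same two-step argument as the paper: invoke \cite[Theorem 2.1]{BS17} to obtain a density $h \in L^2_{loc}(U)$, then apply Corollary \ref{directcorol} with $q=p$, $\mathbf{F}=0$, $f=0$. The paper's proof is the same but terser; it takes for granted that \cite[Theorem 2.1]{BS17} already delivers $h \in L^2_{loc}(U)$ under the VMO hypothesis, so your hedge about a possible preliminary bootstrap is unnecessary.
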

\centerline{}
This paper is structured as follows: Section \ref{sec2} provides a comprehensive explanation of the notations and conventions that are mainly used throughout this paper. Section \ref{2ndsection} establishes the critical framework for deriving the main results of this paper by constructing sectorial Dirichlet forms and their corresponding resolvents and generators. Section \ref{sec4} presents the detailed proofs of the main results introduced in the introduction. Section \ref{sec5} provides essential auxiliary results that are instrumental for the completeness and accuracy of the main proofs.  Finally, Section \ref{condisc} presents a brief conclusion of this paper and discusses directions for future research.

\section{Notations and conventions} \label{sec2}

In this study, we consider the Euclidean space $\mathbb{R}^d$ with $d \geq1$, which is equipped with the standard Euclidean inner product $\langle \cdot, \cdot \rangle$ and the corresponding Euclidean norm $\|\cdot\|$. For any point $x_0 \in \mathbb{R}^d$ and radius $r > 0$, the open ball centered at $x_0$ with radius $r$ is denoted by $B_r(x_0) := \{x \in \mathbb{R}^d : \|x - x_0\| < r\}$. For real numbers $a, b \in \mathbb{R}$ with $a \leq b$, we use the notation $a \wedge b=b \wedge a := a$ and $a \vee b =b \vee a:= b$. 
Let $dx$ denote the Lebesgue measure on $\mathbb{R}^d$. 
Let $U$ be an open subset of $\mathbb{R}^d$. The set of all Borel measurable functions on $U$ is denoted by $\mathcal{B}(U)$. For $\mathcal{A} \subset \mathcal{B}(U)$, $\mathcal{A}_0$ consists of functions $f \in \mathcal{A}$ such that $\text{supp}(f \, dx)$ is compact and contained in $U$. The space of continuous functions on $U$ and its closure $\overline{U}$ are denoted by $C(U)$ and $C(\overline{U})$, respectively.
We let $C_0(U) := C(U)_0$, which is the set of continuous functions on $U$ with compact support in $U$.
For $k \in \mathbb{N} \cup \{\infty\}$, the space of $k$-times continuously differentiable functions on $U$ is denoted by $C^k(U)$, and $C^k_0(U) := C^k(U) \cap C_0(U)$. For $k \in \mathbb{N} \cup \{ \infty \}$, the space $C^k(\overline{U})$ consists of all functions $f$ on $\overline{U}$ for which there exist an open set $V \supset \overline{U}$ and a function $\tilde{f} \in C^k(V)$ such that $\tilde{f} = f$ on $\overline{U}$. Let $r \in [1, \infty]$. The $L^r$-space on $U$ with respect to a measure $\nu$ is denoted by $L^r(U, \nu)$, equipped with the standard $L^r$-norm with respect to $\nu$. The space $L^r(U, \mathbb{R}^d, \nu)$ denotes the space of all $L^r$-vector fields on $U$ with respect to $\nu$ with the norm $\|\mathbf{F}\|_{L^r(U, \nu)} := \|\|\mathbf{F}\|\|_{L^r(U, \nu)}$. The localized $L^r$-space with respect to $\nu$, $L^r_{\text{loc}}(U, \nu)$, is the set of all Borel measurable functions $f$ on $U$ such that $f 1_W \in L^r(U, \nu)$ for any bounded open set $W \subset \mathbb{R}^d$ with $\overline{W} \subset U$. Similarly, $L^r_{\text{loc}}(U, \mathbb{R}^d, \nu)$ is the set of all vector fields $\mathbf{F}$ satisfying $\|\mathbf{F}\| \in L^r_{\text{loc}}(U, \nu)$. For simplicity, we write $L^r(U):=L^r(U, dx)$, $L_{loc}^r(U):= L^r_{loc}(U, dx)$, $L^r(U, \mathbb{R}^d):= L^r(U, \mathbb{R}^d, dx)$, and $L^r_{loc}(U, \mathbb{R}^d):=L^r_{loc}(U, \mathbb{R}^d, dx)$. For a function $f$ defined on $U$, the weak partial derivative with respect to the $i$-th coordinate is denoted by $\partial_i f$, provided it exists. The Sobolev space $H^{1,r}(U)$ consists of all functions $f \in L^r(U)$ for which $\partial_i f \in L^r(U)$ for each $i = 1, \ldots, d$, equipped with the usual $H^{1,r}(U)$-norm. The subspace $H^{1,q}_0(U)$, for $q \in [1, \infty)$, is defined as the closure of $C_0^\infty(U)$ in $H^{1,q}(U)$. The Sobolev space $H^{2,r}(U)$ consists of all functions $f \in L^r(U)$ with $\partial_i f \in L^r(U)$ and $\partial_i \partial_j f \in L^r(U)$ for all $i, j = 1, \ldots, d$ equipped with the usual $H^{2,r}(U)$-norm. The weak Laplacian is denoted by $\Delta f := \sum_{i=1}^d \partial_i \partial_i f$. For a twice weakly differentiable function $f$, the weak Hessian matrix is given by $\nabla^2 f := (\partial_i \partial_j f)_{1 \leq i, j \leq d}$. Let $\beta \in (0,1]$. Then, define $C^{0, \beta}(\overline{U})$ as the set of all continuous functions $f$ on $\overline{U}$ satisfying that
$\sup_{x,y \in \overline{U}} \frac{|f(x)-f(y)|}{\|x-y\|^{\beta}}< \infty$. $C^{0, \beta}_{loc}(U)$ denotes the set of all continuous functions $f$ on $U$ for which $f \in C^{0, \beta}(\overline{V})$ for any bounded open set $V$ with $\overline{V} \subset U$. For a matrix $B = (b_{ij})_{1 \leq i, j \leq d}$, its trace is defined as $\text{trace}(B) := \sum_{i=1}^d b_{ii}$. Throughout this work, we let $2_* := \frac{2d}{d+2}$ if $d \geq 3$.

\begin{defn} \label{defnvmo}
Let $d \geq 1$, $\omega$ be a positive continuous function on $[0, \infty)$ with $\omega(0) = 0$. The set $VMO$ consists of all functions $g \in L^1_{loc}(\mathbb{R}^d)$ for which
$$
\sup_{z \in \mathbb{R}^d, r < R} r^{-2d} \int_{B_r(z)} \int_{B_r(z)} |g(x) - g(y)| \, dx \, dy \leq \omega(R), \quad \text{for every $R > 0$}.
$$
For an open ball $B$ in $\mathbb{R}^d$, $VMO(B)$ denotes all functions $f \in L^1 (B)$ for which there exists $\tilde{f} \in VMO$ such that $\tilde{f}|_B=f$. For an open subset $U$ of $\mathbb{R}^d$, $VMO_{loc}(U)$ denotes all functions $f \in L^1_{loc}(U)$ for which $f \in VMO(B)$ for any open ball $B$ in $\mathbb{R}^d$ with $\overline{B} \subset U$.
\end{defn}
It is known that $C_0(\mathbb{R}^d) \cup W^{1,d}(\mathbb{R}^d) \subset VMO$ (see \cite[Proposition 2.1]{L23ID}). Using simple extension properties and the partition of unity, it follows that for any open subset $U$ of $\mathbb{R}^d$, $C(U) \cup W^{1,d}_{loc}(U) \subset VMO_{loc}(U)$. By the definition of $VMO$, if $f,g \in VMO$, then $f+g \in VMO$. Moreover, Proposition \ref{algprovmo} demonstrates that the set of bounded $VMO$ functions is closed under multiplication.

\begin{defn} \label{basidefn} 
(i) Let $U \subset \mathbb{R}^d$ be an open set with $d \geq 2$, $B = (b_{ij})_{1 \leq i,j \leq d}$ be a (possibly non-symmetric) matrix of locally integrable functions on $U$ and $\mathbf{E}=(e_1,\ldots, e_d) \in L^1_{loc}(U, \mathbb{R}^d)$. We write ${\rm{div}}B = \mathbf{E}$ if
\begin{equation} \label{defndivma}
\int_{U} \sum_{i,j=1}^d b_{ij} \partial_i \phi_j \, dx = - \int_{U} \sum_{j=1}^d e_j \phi_j \, dx \quad \text{ for all $\phi_j \in C_0^{\infty}(U)$, $j=1, \ldots, d$}.
\end{equation}
In other words, if we consider the family of column vectors
\[
\mathbf{b}_j=\begin{pmatrix} 
b_{1j} \\ \vdots \\  b_{dj}
\end{pmatrix},\quad 1\le j\le d,\  \text{ i.e.}\  B=(\mathbf{b}_1 | \ldots |\mathbf{b}_d),
\]
then \eqref{defndivma} is rewritten as
\begin{equation*}
\int_{U} \sum_{j=1}^d \langle \mathbf{b}_j, \nabla \phi_j \rangle dx =-\int_{U} \langle \mathbf{E}, \Phi \rangle dx, \text{ \;\;\; for all $\Phi=(\phi_1, \ldots, \phi_d) \in C_0^{\infty}(U)^d$.}
\end{equation*}
(ii) Let $U \subset \mathbb{R}^d$ be an open set with $d \geq 2$, $\mathbf{F} \in L^1_{loc}(U, \mathbb{R}^d)$ and $f \in L^1_{loc}(U)$. We write ${\rm div} \mathbf{F}=f$ if
$$
\int_{U} \langle \mathbf{F}, \nabla \varphi \rangle dx = -\int_{U} f \varphi dx, \quad \text{ for all } \varphi \in C_0^{\infty}(U).
$$
\end{defn}
\centerline{}

\section{Dirichlet forms approach} \label{2ndsection}
\begin{theo} \label{exisinfinvar}
Let $A = (a_{ij})_{1 \leq i,j \leq d}$ be a (possibly non-symmetric) matrix of measurable functions on $\mathbb{R}^d$ with $d \geq 2$, which is locally uniformly strictly elliptic and bounded on $\mathbb{R}^d$ (see \eqref{locellst}). Let $\mathbf{H} \in L^p_{loc}(\mathbb{R}^d, \mathbb{R}^d)$ with $p \in (d, \infty)$.
Then, there exists $\rho \in H^{1,2}_{loc}(\mathbb{R}^d) \cap C(\mathbb{R}^d)$ with $\rho(x)>0$ for all $x \in \mathbb{R}^d$ such that
\begin{equation} \label{infinvamea}
\int_{\mathbb{R}^d} \langle A^T \nabla \rho - \rho \mathbf{H}, \nabla \varphi \rangle dx = 0, \quad \text{ for all $\varphi \in C_0^{\infty}(\mathbb{R}^d)$}.
\end{equation}
Equivalently, for the partial differential operator $(\mathcal{L}, C_0^{\infty}(\mathbb{R}^d))$ defined by
\[
\mathcal{L}f ={\rm div}(A \nabla f) + \langle \mathbf{H}, \nabla f \rangle, \quad f \in C_0^{\infty}(\mathbb{R}^d),
\]
the measure $\rho\, dx$ is an infinitesimally invariant measure for $(\mathcal{L}, C_0^{\infty}(\mathbb{R}^d))$, i.e. it holds that
\[
\int_{\mathbb{R}^d} \mathcal{L}\varphi \cdot \rho \, dx = 0 \quad \text{for all } \varphi \in C_0^{\infty}(\mathbb{R}^d).
\]
Moreover, if $a_{ij} \in VMO_{loc}(\mathbb{R}^d)$ for all $1 \leq i,j \leq d$, then $\rho \in H^{1,p}_{loc}(\mathbb{R}^d) \cap C^{0, 1-d/p}(\mathbb{R}^d)$.
\end{theo}
\noindent
\begin{proof}
First, by \cite[Theorem 2.27(i)]{LST22} there exists $\rho \in H^{1,2}_{loc}(\mathbb{R}^d) \cap C(\mathbb{R}^d)$ with $\rho(x)>0$ for all $x \in \mathbb{R}^d$ such that \eqref{infinvamea} holds. Using integration by parts, the measure $\rho\,dx$ is an infinitesimally invariant measure for $(\mathcal{L}, C_0^{\infty}(\mathbb{R}^d))$. Now observe that
$$
\int_{\mathbb{R}^d} \langle A^T \nabla \rho, \nabla \varphi \rangle dx = \int_{\mathbb{R}^d}\langle \rho \mathbf{H}, \nabla \varphi \rangle dx, \quad \text{ for all $\varphi \in C_0^{\infty}(\mathbb{R}^d)$},
$$
and hence $\rho \in H^{1,p}_{loc}(\mathbb{R}^d) \cap C(\mathbb{R}^d)$ by \cite[Theorem 1.8.3]{BKRS15}. 
\end{proof}

\centerline{}
\noindent
{\it
{\bf (Hyp)}: $d \geq 3$, $\mathbf{H} \in L_{loc}^p(\mathbb{R}^d, \mathbb{R}^d)$ with $p \in (d, \infty)$, and $A=(a_{ij})_{1 \leq i,j \leq d}$ is a (possibly non-symmetric) matrix of functions on $\mathbb{R}^d$ such that $a_{ij} \in VMO$ for all $1 \leq i,j \leq d$ (see Definition \ref{defnvmo}) and for some constants $\lambda$, $M>0$,
\begin{equation*} 
\lambda \| \xi \|^2 \leq \langle A(x) \xi, \xi \rangle, \quad \max_{1 \leq i,j \leq d} |a_{ij}(x)| \leq M, \quad \text{ for a.e. $x \in \mathbb{R}^d$ and for all $\xi \in \mathbb{R}^d$}. 
\end{equation*}
}
\centerline{}
\noindent
Under the assumption that {\bf (Hyp)} holds, let $\rho$ be as in Theorem \ref{exisinfinvar}, and
\begin{align} \label{rhodivfreeb}
\mu:=\rho dx, \qquad  \mathbf{B}:= \mathbf{H} -\frac{1}{\rho} A^T \nabla \rho, \quad \text{ on }\; \mathbb{R}^d.
\end{align}
 Then, Theorem \ref{exisinfinvar} deduces that $\rho \mathbf{B} \in L^p_{loc}(\mathbb{R}^d, \mathbb{R}^d)$ satisfies 
\begin{equation} \label{divfreecon}
\int_{\mathbb{R}^d} \langle \mathbf{B}, \nabla \varphi \rangle d\mu =\int_{\mathbb{R}^d} \langle \rho \mathbf{B}, \nabla \varphi \rangle dx=\int_{\mathbb{R}^d}  \langle \rho \mathbf{H} -A^T \nabla \rho, \nabla \varphi \rangle \,dx =0, \quad \text{ for all $\varphi \in C_0^{\infty}(\mathbb{R}^d)$}.
\end{equation}
In particular,
\begin{equation} \label{divfreeconalg}
\int_{\mathbb{R}^d}  \langle \mathbf{B}, \nabla \varphi \rangle \varphi d\mu = \frac12 \int_{\mathbb{R}^d}  \langle \mathbf{B}, \nabla \varphi^2 \rangle  d\mu = 0,\quad \text{ for all $\varphi \in C_0^{\infty}(\mathbb{R}^d)$}.
\end{equation}
Let $B$ be an open ball in $\mathbb{R}^d$ and $f,g \in C_0^{\infty}(B)$.
Since $\rho$ is bounded below and above by strictly positive constants on $\overline{B}$, it follows from the Cauchy-Schwarz inequality and the Sobolev inequality that
\begin{align}
\left| \int_{B} \langle \mathbf{B}, \nabla f \rangle g d\mu \right| &\leq \|\rho \mathbf{B}\|_{L^{d}(B)} \| \nabla f \|_{L^2(B)} \|g\|_{L^{\frac{2d}{d-2}}(B)} \nonumber \\
&\leq \|\rho \mathbf{B}\|_{L^{d}(B)} \| \nabla f \|_{L^2(B)} \cdot \gamma \|\nabla g\|_{L^{2}(B)} \nonumber  \\
& \leq \gamma \frac{\max_{\overline{B}} \rho}{\min_{\overline{B}} \rho} \| \mathbf{B} \|_{L^d(B)} \left( \int_{B} \| \nabla f \|^2 d\mu \right)^{1/2} \left( \int_{B} \| \nabla g \|^2 d\mu \right)^{1/2}\nonumber \\
& \leq  \frac{\gamma}{\lambda} \frac{\max_{\overline{B}} \rho}{\min_{\overline{B}} \rho} \| \mathbf{B} \|_{L^d(B)}  \left( \int_{B}  \langle A \nabla f, \nabla f \rangle d\mu \right)^{1/2} \left( \int_{B}  \langle A \nabla g, \nabla g \rangle d\mu  \right)^{1/2}, \label{weksecb}
\end{align}
where $\gamma:=\frac{2(d-1)}{d-2}$. Now, define a positive bilinear form $(\mathcal{E}^B, C_0^{\infty}(B))$ given by
\begin{equation} \label{predirici}
\mathcal{E}^{B}(f,g) = \int_{B}  \langle A \nabla f, \nabla g \rangle d\mu - \int_{B} \langle \mathbf{B}, \nabla f \rangle g d\mu, \quad f,g \in C_0^{\infty}(B).
\end{equation}
Consequently, the following result demonstrates that the bilinear form $(\mathcal{E}^B, C_0^{\infty}(B))$ is closable on $L^2(B, \mu)$ and its closure on $L^2(B, \mu)$ is a Dirichlet form.

\begin{theo} \label{matebisdiri}
Under the assumption of {\bf (Hyp)}, let $\rho \in H^{1,p}_{loc}(\mathbb{R}^d) \cap C(\mathbb{R}^d)$ be as in Theorem \ref{exisinfinvar} and $\mu$ and $\mathbf{B}$ be defined as in \eqref{rhodivfreeb}.
Let $B$ be an open ball in $\mathbb{R}^d$ and define $(\mathcal{E}^B, C_0^{\infty}(B))$ as the positive bilinear form given by \eqref{predirici}. 
Then, the following holds:
\begin{itemize}
\item[(i)]
$(\mathcal{E}^B, C_0^{\infty}(B))$ satisfies the strong sector condition, i.e.
$$
\mathcal{E}^{B}(f,g) \leq c \, \mathcal{E}^{B}(f,f)^{1/2} \mathcal{E}^{B}(g,g)^{1/2}, \quad \text{ for all $f,g \in C_0^{\infty}(B)$},
$$
where $c>0$ is a constant independent of $f,g\in C_0^{\infty}(B)$.
\item[(ii)]
$(\mathcal{E}^B, C_0^{\infty}(B))$  is closable on $L^2(B, \mu)$ (see \cite[Chapter I, Definition 3.1]{MR92}), and hence its closure denoted by $(\mathcal{E}^B, D(\mathcal{E}^B))$ is a coercive closed form on $L^2(B, \mu)$ (see \cite[Chapter I, Definition 2.4, Remark 3.2]{MR92}). Indeed, $(\mathcal{E}^B, D(\mathcal{E}^B))$ is the smallest coercive closed form on $L^2(B, \mu)$ that extends $(\mathcal{E}^B, C_0^{\infty}(B))$.

\item[(iii)]
$D(\mathcal{E}^B)=H^{1,2}_0(B)$ and
$$
\mathcal{E}^B(f,g)= \int_{B} \langle A \nabla f, \nabla g \rangle d\mu- \int_{B} \langle \mathbf{B}, \nabla f \rangle g d\mu, \quad \text{ for all $f,g \in D(\mathcal{E}^B)$}.
$$
In particular, 
$$
\mathcal{E}^B(f,f)= \int_{B}  \langle A \nabla f, \nabla f \rangle d\mu, \quad \text{ for all $f \in D(\mathcal{E}^B)$}.
$$

\item[(iv)]
$(\mathcal{E}^B, D(\mathcal{E}^B))$ is a Dirichlet form (refer to \cite[Chapter I, Definition 4.5]{MR92}).
\end{itemize}
\end{theo}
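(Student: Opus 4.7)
The plan is to verify the four assertions in order. The symmetric part of $\mathcal{E}^B$ turns out to be equivalent on $C_0^\infty(B)$ to a weighted Dirichlet energy and hence closable; the antisymmetric part (the antisymmetric portion of $A$ together with the drift) is controlled via the divergence-free property \eqref{divfreecon} of $\mathbf{B}$ with respect to $\mu$ and the integrability $\rho\mathbf{B}\in L^p_{loc}(\mathbb{R}^d,\mathbb{R}^d)$.

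For \textbf{(i)}, I would decompose $A=A_s+A_a$ into its symmetric and antisymmetric parts. Then $\mathcal{E}^B(f,f)=\int_B\langle A\nabla f,\nabla f\rangle\,d\mu$, since $\langle A_a\xi,\xi\rangle=0$ pointwise and the drift contribution vanishes by \eqref{divfreeconalg}. Splitting $|\mathcal{E}^B(f,g)|$ into three parts, I would bound the $A_s$-piece by Cauchy-Schwarz applied to the positive symmetric form $(f,g)\mapsto\int_B\langle A_s\nabla f,\nabla g\rangle\,d\mu$; the $A_a$-piece using $\max_{ij}|a_{ij}|\le M$ and ellipticity, which yields a multiple of $\mathcal{E}^B(f,f)^{1/2}\mathcal{E}^B(g,g)^{1/2}$; and the drift piece directly via \eqref{weksecb}.

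For \textbf{(ii)} and \textbf{(iii)}, a short calculation shows that the symmetric part of $\mathcal{E}^B$ is $\tilde{\mathcal{E}}(f,g):=\int_B\langle A_s\nabla f,\nabla g\rangle\,d\mu$: the $A_a$-piece is pointwise antisymmetric in $(f,g)$, and the drift piece is antisymmetric in $(f,g)$ by applying \eqref{divfreecon} to $fg\in C_0^\infty(B)$. Because $\rho\in C(\overline{B})$ is bounded above and below by strictly positive constants and $A_s$ is uniformly elliptic and bounded, $\tilde{\mathcal{E}}_1^{1/2}$ is equivalent to the $H^{1,2}$-norm on $C_0^\infty(B)$; thus $\tilde{\mathcal{E}}$ is closable on $L^2(B,\mu)$ with closure $(H^{1,2}_0(B),\tilde{\mathcal{E}})$. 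Together with the sector condition this forces $(\mathcal{E}^B,C_0^\infty(B))$ to be closable (cf.\ \cite[Chapter I, Proposition 3.5]{MR92}; alternatively, $\tilde{\mathcal{E}}(f_n-f_m,f_n-f_m)=\mathcal{E}^B(f_n-f_m,f_n-f_m)$, so every $\mathcal{E}^B$-Cauchy sequence is $\tilde{\mathcal{E}}$-Cauchy), and yields $D(\mathcal{E}^B)=H^{1,2}_0(B)$. The integral formulas in (iii) extend by density, using $H^{1,2}$-continuity of both the $A$-term (by boundedness of $A$) and the drift term (by \eqref{weksecb}); the specialization $f=g$ uses that \eqref{divfreeconalg} extends to $f\in H^{1,2}_0(B)$, since $f^2\in H^{1,1}_0(B)$ is an admissible test function for $\rho\mathbf{B}\in L^p(B,\mathbb{R}^d)$ when $p>d$.

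For \textbf{(iv)}, I would verify the unit contraction property of \cite[Chapter I, Definition 4.5]{MR92}. Given $f\in H^{1,2}_0(B)$ and $v:=0\vee f\wedge 1$, standard chain-rule arguments give $v\in H^{1,2}_0(B)$ with $\nabla v=\mathbf{1}_{\{0<f<1\}}\nabla f$ a.e., from which a direct pointwise computation yields $\nabla(f\pm v)\cdot\nabla(f\mp v)=\mathbf{1}_{\{f\le 0\}\cup\{f\ge 1\}}\|\nabla f\|^2$. Hence the $A$-contribution to $\mathcal{E}^B(f\pm v,f\mp v)$ equals $\int_{\{f\le 0\}\cup\{f\ge 1\}}\langle A\nabla f,\nabla f\rangle\,d\mu\ge 0$ by ellipticity. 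For the drift contribution to $\mathcal{E}^B(f+v,f-v)$, the integrand vanishes on $\{0<f<1\}$, and I would rewrite the contributions on $\{f\le 0\}$ and $\{f\ge 1\}$ as $\tfrac12\int_B\langle \mathbf{B},\nabla(f^-)^2\rangle\,d\mu$ and $\tfrac12\int_B\langle \mathbf{B},\nabla((f-1)^+)^2\rangle\,d\mu$ respectively; both vanish by the extension of \eqref{divfreecon} to $H^{1,1}_0(B)$ test functions. The companion inequality $\mathcal{E}^B(f-v,f+v)\ge 0$ follows from the same $A$-estimate together with the antisymmetry $\int_B\langle \mathbf{B},\nabla u\rangle w\,d\mu+\int_B\langle \mathbf{B},\nabla w\rangle u\,d\mu=0$ applied to $u=f-v$, $w=f+v$. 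The step I expect to be most delicate is this drift computation: identifying the $\{f\le 0\}$ and $\{f\ge 1\}$ pieces with the divergence pairing of $\mathbf{B}$ against the squared truncations $(f^-)^2$ and $((f-1)^+)^2$, and rigorously extending \eqref{divfreecon} from $C_0^\infty(B)$ to such $H^{1,1}_0(B)$ test functions via $\rho\mathbf{B}\in L^p(B,\mathbb{R}^d)$ with $p>d\ge 3$.
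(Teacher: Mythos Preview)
Your treatment of (i)–(iii) matches the paper's essentially line by line: the paper also observes that the drift vanishes on the diagonal via \eqref{divfreeconalg}, so $\mathcal{E}^B(f,f)=\int_B\langle A\nabla f,\nabla f\rangle\,d\mu$, and then closability and the identification $D(\mathcal{E}^B)=H^{1,2}_0(B)$ follow from the equivalence of $\mathcal{E}^B(\cdot,\cdot)^{1/2}+\|\cdot\|_{L^2(B,\mu)}$ with the $H^{1,2}$-norm. Your $A_s/A_a$ decomposition for (i) is a cosmetic variant; the paper bounds the full $A$-term at once.

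For (iv), your route is genuinely different. The paper appeals to \cite[Chapter I, Proposition 4.10]{MR92} and works on the dense core $C_0^\infty(B)$ with the smooth approximating contractions $\varphi_\varepsilon$ of Proposition~\ref{varphidefn}: it verifies $\liminf_{\varepsilon\to0^+}\mathcal{E}^B(\varphi_\varepsilon\circ u,\,u-\varphi_\varepsilon\circ u)\ge0$ and the companion inequality by splitting off the symmetric part $\mathcal{E}^{0,B}$ and handling the drift remainders $S_\varepsilon,T_\varepsilon$ via dominated convergence and the auxiliary Propositions~\ref{auxpropfir}, \ref{auxprophi}. The payoff is that every test function stays in $C_0^\infty(B)$, so \eqref{divfreecon} applies without any extension; the cost is the $\varepsilon\to0^+$ limit machinery. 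Your approach instead works directly in $H^{1,2}_0(B)$ with the hard truncation $v=f^+\wedge1$, exploits $\nabla v=\mathbf{1}_{\{0<f<1\}}\nabla f$, and reduces the drift pieces to $\int_B\langle\mathbf{B},\nabla\psi\rangle\,d\mu$ for $\psi=(f^-)^2$, $((f-1)^+)^2$, $f^2-v^2$, which vanish once \eqref{divfreecon} is extended to $W^{1,d/(d-1)}_0(B)$ test functions (legitimate since $\rho\mathbf{B}\in L^p(B)\subset L^d(B)$). This is shorter and avoids the auxiliary constructions, but you should make sure the contraction criterion you cite from \cite[Chapter I, Definition 4.5]{MR92} is stated exactly in the $\mathcal{E}(f\pm v,f\mp v)\ge0$ form; some equivalent characterizations in that chapter use $\mathcal{E}(v,f-v)\ge0$ and $\mathcal{E}(f-v,v)\ge0$, for which your computation adapts immediately (indeed the $A$-part then even vanishes). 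There is also a harmless sign slip in your identification on $\{f\le0\}$ (one obtains $-\tfrac12\int_B\langle\mathbf{B},\nabla(f^-)^2\rangle\,d\mu$ rather than $+\tfrac12$), but since the integral is zero this does not affect the conclusion.
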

\begin{proof}
(i) Let $f,g \in C_0^{\infty}(B)$ be given. Applying the Cauchy-Schwartz inequality, \eqref{divfreeconalg} and \eqref{weksecb}, we have
\begin{align*}
\left| \mathcal{E}^{B}(f,g)  \right| &\leq  \left| \int_{B}  \langle A \nabla f, \nabla g \rangle d\mu \right| + \left| \int_{B} \langle \mathbf{B}, \nabla f \rangle g d\mu \right| \\
&\leq \mathcal{E}^{B}(f,f)^{1/2}  \mathcal{E}^{B}(g,g)^{1/2}  +\frac{\gamma}{\lambda} \frac{\max_{\overline{B}} \rho}{\min_{\overline{B}} \rho} \| \mathbf{B} \|_{L^d(B)}  \mathcal{E}^{B}(f,f)^{1/2}  \mathcal{E}^{B}(g,g)^{1/2}  \\
& = \left(1+\frac{\gamma}{\lambda} \frac{\max_{\overline{B}} \rho}{\min_{\overline{B}} \rho} \| \mathbf{B} \|_{L^d(B)} \right) \mathcal{E}^{B}(f,f)^{1/2}  \mathcal{E}^{B}(g,g)^{1/2}.
\end{align*}
(ii) 
To show the closability of $(\mathcal{E}^B, C_0^{\infty}(B))$, let $(f_n)_{n \geq 1} \subset C_0^{\infty}(B)$ be such that $\lim_{n \rightarrow \infty} f_n =0$ in $L^2(B, \mu)$ and $\lim_{n,m \rightarrow \infty}\mathcal{E}^B(f_n-f_m, f_n-f_m)=0$.  Then, \eqref{divfreeconalg} implies that
\begin{align*}
\int_{B} \langle A \nabla (f_n-f_m), \nabla (f_n-f_m) \rangle d\mu &= \int_{B} \langle A \nabla (f_n-f_m), \nabla (f_n-f_m) \rangle d\mu-\frac12 \int_{B} \langle \mathbf{B}, \nabla( f_n-f_m)^2 \rangle  d\mu \\
&=\mathcal{E}^B(f_n-f_m, f_n-f_m)  \longrightarrow 0 \;\; \text{ as $n,m \rightarrow \infty$}.
\end{align*}
Thus, we have
$$
\|\nabla (f_n-f_m) \|^2_{L^2(B, \mu)} \leq \frac{1}{\lambda}\int_{B} \langle A \nabla (f_n-f_m), \nabla (f_n-f_m) \rangle d\mu \longrightarrow 0, \quad \text{ as $n,m \rightarrow \infty$}.
$$
Since $L^2(B, \mu)$ is complete, for each $i=1,2, \ldots d$ there exists $g_i \in L^2(B, \mu)$ such that
$$
\lim_{n \rightarrow \infty} \partial_i f_n=g_i, \quad \text{ in $L^2(B, \mu)$}.
$$
Now let $\varphi \in C_0^{\infty}(B)$ and $i \in \{1, 2 \ldots, d\}$. Then,
$$
\int_{B} g_i \varphi dx = \lim_{n \rightarrow \infty} \int_{B} \partial_i f_n \cdot \varphi dx = \lim_{n \rightarrow \infty}-\int_{B} f_n \cdot \partial_i \varphi dx =0.
$$
Thus, $g_i=0$ on $B$, and hence $\lim_{n \rightarrow \infty} \nabla f_n=0$ in $L^2(B, \mathbb{R}^d)$, so that $\lim_{n \rightarrow \infty}\mathcal{E}^B(f_n, f_n)=0$. Therefore, $(\mathcal{E}^B, C_0^{\infty}(B))$ is closable. The rest follows from the weak sector condition in (i). \\ \\
(iii)
Let $f \in D(\mathcal{E}^B)$. Then, there exists $(f_n)_{n \geq 1} \subset C_0^{\infty}(B)$ such that $\lim_{n \rightarrow \infty}f_n=f$ in $L^2(B, \mu)$ and $\mathcal{E}^B(f_n-f_m, f_n-f_m) \longrightarrow 0$ as $n,m \rightarrow \infty$. Then, $\| f_n-f_m \|_{H^{1,2}_0(U)} \longrightarrow 0$ as $n,m \rightarrow \infty$. By the completeness of $H^{1,2}_0(B)$, there exists $\tilde{f} \in H^{1,2}_0(B)$ such that $\lim_{n \rightarrow \infty} f_n = \tilde{f}$ in $H^{1,2}_0(B)$. Therefore, we have $f=\tilde{f} \in H^{1,2}_0(B)$. Conversely, let $f \in H^{1,2}_0(B)$. Then, there exists $(f_n)_{n \geq 1} \subset C_0^{\infty}(B)$ such that $\lim_{n \rightarrow \infty} f_n = f$ in $H^{1,2}_0(B)$. Thus, $\lim_{n \rightarrow \infty} f_n =f$ in $L^2(B, \mu)$ and $\mathcal{E}^B(f_n-f_m, f_n-f_m) \longrightarrow 0$ as $n,m \rightarrow \infty$. Therefore, $f \in D(\mathcal{E}^B)$. For the rest assertion, let $f,g \in D(\mathcal{E}^B)$. Then, there exists $(f_n)_{n \geq 1}, (g_n)_{n \geq 1} \subset C_0^{\infty}(B)$ such that $\lim_{n \rightarrow \infty}f_n=f$, $\lim_{n \rightarrow \infty}g_n=g$ in $L^2(B, \mu)$ and $\mathcal{E}^B(f_n-f_m, f_n-f_m) \longrightarrow 0$, $\mathcal{E}^B(g_n-g_m, g_n-g_m) \longrightarrow 0$ as $n,m \rightarrow \infty$. Thus, based on the arguments above, $f,g \in H^{1,2}_0(B)$ satisfy $\lim_{n \rightarrow \infty} f_n =f$ and $\lim_{n \rightarrow \infty} g_n =g$ in $H^{1,2}_0(B)$. Finally, since $\mathcal{E}^B(f,g) = \lim_{n \rightarrow \infty} \mathcal{E}^B(f_n, g_n)$, the assertion follows.\\ \\
(iv)
Let $\varepsilon>0$ and $\varphi_{\varepsilon} \in C^{\infty}(\mathbb{R})$ be a function defined as in Proposition \ref{varphidefn}. First consider $u \in C_0^{\infty}(B)$. Then, $\varphi_{\varepsilon} \circ u \in C_0^{\infty}(B)$. Define a bilinear form $(\mathcal{E}^{0,B}, C_0^{\infty}(B))$ given by
\begin{equation} \label{secdirich}
\mathcal{E}^{0,B}(f,g) := \int_{B} \langle A \nabla f, \nabla g \rangle d\mu, \quad f,g \in C_0^{\infty}(B).
\end{equation}
Observe that
\begin{align*}
\mathcal{E}^{0,B}(\varphi_{\varepsilon} \circ u, u-\varphi_{\varepsilon} \circ u )  = \int_{B} (\varphi'_{\varepsilon} \circ u ) (1-\varphi'_{\varepsilon} \circ u ) \langle A \nabla u, \nabla u \rangle d\mu  \geq 0,
\end{align*}
and hence $\liminf_{ \varepsilon \rightarrow 0+} \mathcal{E}^{0,B}(\varphi_{\varepsilon} \circ u, u-\varphi_{\varepsilon} \circ u ) \geq 0$. Likewise, $\liminf_{ \varepsilon \rightarrow 0+} \mathcal{E}^{0,B}( u-\varphi_{\varepsilon} \circ u, \varphi_{\varepsilon} \circ u ) \geq 0$. Meanwhile, let
$$
S_{\varepsilon}:= -\int_{B} \big \langle \mathbf{B}, \nabla (\varphi_{\varepsilon} \circ u) \big \rangle (u-\varphi_{\varepsilon} \circ u)\,d\mu. 
$$
Then, by Lebesgue's theorem and Proposition \ref{auxpropfir}, we obtain that
\begin{align*} 
\lim_{\varepsilon \rightarrow 0+} S_{\varepsilon} &= \lim_{\varepsilon \rightarrow 0+} \int_{B} (\varphi'_{\varepsilon} \circ u ) (u-\varphi_{\varepsilon} \circ u) \langle -\mathbf{B}, \nabla u \rangle d\mu, \nonumber \\
&=\int_{B} (1_{[0,1]} \circ u)  (u-u^+ \wedge 1) \langle -\mathbf{B}, \nabla u \rangle d\mu = 0. 
\end{align*}
Likewise, let
$$
T_{\varepsilon}:=-\int_{B} \big \langle \mathbf{B}, \nabla (u-\varphi_{\varepsilon} \circ u) \big \rangle (\varphi_{\varepsilon} \circ u)\,d\mu.
$$
Then, by Lebesgue's theorem and Proposition \ref{auxpropfir}, we deduce that
\begin{align*}
\lim_{\varepsilon \rightarrow 0+} T_{\varepsilon} &= \lim_{\varepsilon \rightarrow 0+} \int_{B} (1-\varphi'_{\varepsilon} \circ u) (\varphi_{\varepsilon} \circ u) \langle -\mathbf{B}, \nabla u \rangle d\mu \\
&=\int_{B} (1_{\mathbb{R}\setminus[0,1]} \circ u)  (u^+ \wedge 1) \langle -\mathbf{B}, \nabla u \rangle d\mu = \int_{B} (1_{(1, \infty)} \circ u) \langle -\mathbf{B}, \nabla u \rangle d\mu.
\end{align*}
Now, let $\Phi_{\varepsilon} \in C^{\infty}(\mathbb{R})$ be as in Proposition \ref{auxprophi}. Then, $\Phi_{\varepsilon} \circ u \in C_0^{\infty}(B)$. Moreover, by Lebesgue's theorem and Proposition \ref{auxprophi},
\begin{align*}
\lim_{\varepsilon \rightarrow 0+} T_{\varepsilon}&=\int_{B} (1_{(1, \infty)} \circ u) \langle -\mathbf{B}, \nabla u \rangle d\mu = \lim_{\varepsilon \rightarrow 0+} \int_{B} (\Phi'_{\varepsilon} \circ u) \langle -\mathbf{B}, \nabla u \rangle d\mu \nonumber \\
&=\lim_{\varepsilon \rightarrow 0+} \int_{B} \langle -\mathbf{B}, \nabla (\Phi_{\varepsilon} \circ u)  \rangle d\mu \geq 0. 
\end{align*}
Therefore, 
\begin{align*}
&\liminf_{ \varepsilon \rightarrow 0+} \mathcal{E}^{B}(\varphi_{\varepsilon} \circ u, u-\varphi_{\varepsilon} \circ u ) =\liminf_{ \varepsilon \rightarrow 0+} \Big( \mathcal{E}^{0, B}(\varphi_{\varepsilon} \circ u, u-\varphi_{\varepsilon} \circ u ) + S_{\varepsilon} \Big) \\
& \geq  \liminf_{ \varepsilon \rightarrow 0+} \mathcal{E}^{0, B}(\varphi_{\varepsilon} \circ u, u-\varphi_{\varepsilon} \circ u )+\liminf_{ \varepsilon \rightarrow 0+} S_{\varepsilon} \geq 0
\end{align*}
and
\begin{align*}
&\liminf_{ \varepsilon \rightarrow 0+} \mathcal{E}^{B}(u-\varphi_{\varepsilon} \circ u, \varphi_{\varepsilon} \circ u ) =   \liminf_{ \varepsilon \rightarrow 0+} \Big(\mathcal{E}^{B}(u-\varphi_{\varepsilon} \circ u, \varphi_{\varepsilon} \circ u )+T_{\varepsilon} \Big) \\
&\geq  \liminf_{ \varepsilon \rightarrow 0+} \mathcal{E}^{0, B}(u-\varphi_{\varepsilon} \circ u, \varphi_{\varepsilon} \circ u )+\liminf_{ \varepsilon \rightarrow 0+} T_{\varepsilon} \geq 0.
\end{align*}
Since $C_0^{\infty}(B)$ is dense in $D(\mathcal{E}^B)$ with respect to $\mathcal{E}^{B}(\cdot, \cdot)^{1/2} + \| \cdot \|_{L^2(B, \mu)}$, we conclude that $(\mathcal{E}^B, D(\mathcal{E}^B))$ is a Dirichlet form by \cite[Chapter I, Proposition 4.10]{MR92}.
\end{proof}

\begin{rem}
Upon careful examination of the proof, it can be shown that even if condition \eqref{divfreecon} is replaced with the condition, 
$$
\int_{B} \langle \mathbf{B}, \nabla \varphi \rangle d\mu \leq 0, \quad \text{ for all $\varphi \in C_0^{\infty}(B)$ with $\varphi \geq 0$},
$$
$(\mathcal{E}^B, D(\mathcal{E}^B))$ still remains a Dirichlet form.
\end{rem}

\centerline{}


\noindent
From now on, under the assumption of {\bf (Hyp)} with an open ball $B$ in $\mathbb{R}^d$, additionally assume that $\text{div} A \in L^d(B, \mathbb{R}^d)$. From Proposition \ref{convdivnon}, we can define an operator $\mathcal{L}^B$ on $H^{2,2}(B)$ as follows:
\begin{align}
\mathcal{L}^B f &= \text{div} (A \nabla f) + \langle \mathbf{H}, \nabla f \rangle \nonumber \\
&= \text{trace}(A \nabla^2  f) + \langle \text{div} A + \mathbf{H}, \nabla f \rangle  \nonumber  \\
&= \text{trace}(A \nabla^2  f) + \Big \langle \text{div} A +\frac{1}{\rho} A^T\nabla \rho , \nabla f \Big \rangle  + \left \langle \mathbf{B}, \nabla f \right \rangle, \quad f \in H^{2,2}(B), \label{mathcallrep}
\end{align}
where $\rho$ and $\mathbf{B}$ are defined as in Theorem \ref{exisinfinvar} and 
\eqref{rhodivfreeb}, respectively. Then, $\mathcal{L}^Bf \in L^2(B, \mu)$ for any $ f \in H^{2,2}(B)$.
\begin{theo} \label{resolregul}
Under the assumption {\bf (Hyp)}, let $\rho \in H^{1,p}_{loc}(\mathbb{R}^d) \cap C(\mathbb{R}^d)$ be as in Theorem \ref{matebisdiri}, $\mu$ and $\mathbf{B}$ be defined as in \eqref{rhodivfreeb}. Let $B$ be an open ball in $\mathbb{R}^d$, $(\mathcal{E}^B, D(\mathcal{E}^B))$ be a Dirichlet form defined as in Theorem \ref{matebisdiri} and $(G^B_{\alpha})_{\alpha>0}$ be its corresponding strongly continuous sub-Markovian resolvent of contractions on $L^2(B, \mu)$ (cf. \cite[Chapter I, Theorem 2.8]{MR92}). Suppose that $\text{\rm div} A \in L^d(B, \mathbb{R}^d)$ and let $\alpha>0$  and $f \in L^{2}(B, \mu)$. Then $G_{\alpha}^B f \in H^{2,2}(B) \cap H^{1,\frac{2d}{d-2}}_0(B)$.
\end{theo}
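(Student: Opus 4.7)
The plan is to identify $u := G_\alpha^B f$ as the unique weak $H^{1,2}_0(B)$-solution of an elliptic Dirichlet boundary value problem, then extract $H^{2,2}$-regularity from elliptic regularity theory for VMO coefficients. By construction of the resolvent of a sectorial Dirichlet form and by Theorem \ref{matebisdiri}(iii), $u \in D(\mathcal{E}^B) = H^{1,2}_0(B)$ and
\begin{equation*}
\alpha \int_B u v \, d\mu + \mathcal{E}^B(u, v) = \int_B f v \, d\mu, \quad v \in H^{1,2}_0(B).
\end{equation*}
Unfolding $\mathcal{E}^B$ and using the weak divergence-freeness of $\rho \mathbf{B}$ from \eqref{divfreecon} to absorb the drift (writing $\rho \langle \mathbf{B}, \nabla u \rangle = \mathrm{div}(\rho u \mathbf{B})$ distributionally), $u$ is the unique $H^{1,2}_0(B)$-weak solution of
\begin{equation*}
-\mathrm{div}\bigl(\rho A \nabla u + \rho u \mathbf{B}\bigr) + \alpha \rho u = \rho f \;\;\text{in } B, \qquad u|_{\partial B} = 0,
\end{equation*}
uniqueness following from Lax--Milgram applied to the shifted, continuous and coercive form $\mathcal{E}^B_\alpha := \mathcal{E}^B + \alpha(\cdot,\cdot)_{L^2(B,\mu)}$.

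To reach $H^{2,2}$, I would first bootstrap the integrability of $\nabla u$. The principal coefficient $\rho A$ is bounded, uniformly elliptic, and has $VMO$ entries (since bounded $VMO$ functions form an algebra; cf.\ Proposition \ref{algprovmo} and the fact that $\rho$ is continuous and strictly positive on $\overline{B}$). The source vector field $\rho u \mathbf{B}$ lies in $L^{q_0}(B, \mathbb{R}^d)$ with some $q_0 > 2$ because $u \in L^{\frac{2d}{d-2}}(B)$ by Sobolev and $\mathbf{B} \in L^p(B, \mathbb{R}^d)$ with $p > d$. Iterating a divergence-form $W^{1,q}$-Dirichlet-regularity theorem for equations with $VMO$ principal part (Meyers--Calder\'on--Zygmund type, valid up to the boundary of the smooth domain $B$) progressively raises the integrability of $\nabla u$, and after finitely many steps one obtains $\nabla u \in L^{\frac{2d}{d-2}}(B, \mathbb{R}^d)$. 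I would then rewrite the equation in non-divergence form
\begin{equation*}
-\mathrm{trace}(A \nabla^2 u) + \langle \mathbf{G}, \nabla u \rangle + \alpha u = f \;\;\text{in } B, \qquad u|_{\partial B} = 0,
\end{equation*}
where $\mathbf{G} := -\mathbf{B} - \mathrm{div} A - \tfrac{1}{\rho} A^T \nabla \rho \in L^d(B, \mathbb{R}^d)$ thanks to $\mathbf{B} \in L^p(B, \mathbb{R}^d)$, $\mathrm{div} A \in L^d(B, \mathbb{R}^d)$, $\nabla \rho \in L^p_{loc}(\mathbb{R}^d, \mathbb{R}^d)$ from Theorem \ref{exisinfinvar}, and the boundedness of $\tfrac{1}{\rho}$ and $A$ on $\overline{B}$. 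With $\nabla u \in L^{\frac{2d}{d-2}}$, H\"older's inequality yields $\langle \mathbf{G}, \nabla u \rangle \in L^2(B)$, hence the right-hand side lies in $L^2(B)$. Invoking a Chiarenza--Frasca--Longo $W^{2,2}$-Dirichlet-regularity theorem for non-divergence elliptic equations with $VMO$ principal coefficients on $B$ produces a strong solution $\tilde u \in H^{2,2}(B) \cap H^{1,2}_0(B)$; uniqueness in $H^{1,2}_0(B)$ then forces $\tilde u = u$, so $u \in H^{2,2}(B)$.

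The inclusion $u \in H^{1,\frac{2d}{d-2}}_0(B)$ follows from $u \in H^{2,2}(B) \cap H^{1,2}_0(B)$ via the Sobolev embedding $H^{2,2}(B) \hookrightarrow H^{1,\frac{2d}{d-2}}(B)$ together with a $C_0^\infty(B)$-approximation in $H^{2,2}$-norm, which also converges in $H^{1,\frac{2d}{d-2}}$-norm. The principal obstacle I anticipate is the bootstrap step: verifying that the divergence-form $W^{1,q}$-regularity theorem for $VMO$ coefficients applies globally on $B$ (not merely in the interior) with the coupled source term $F = \rho u \mathbf{B}$ handled correctly at each iteration, and ensuring the chain of Sobolev embeddings closes in finitely many steps so as to deliver $\nabla u \in L^{\frac{2d}{d-2}}$, which is what permits the final $W^{2,2}$-non-divergence Dirichlet regularity to conclude the proof.
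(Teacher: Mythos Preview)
Your strategy coincides with the paper's: first push $u=G^B_\alpha f$ from $H^{1,2}_0(B)$ up to $H^{1,\frac{2d}{d-2}}_0(B)$ via divergence-form VMO regularity, then invoke Chiarenza--Frasca--Longo to land in $H^{2,2}(B)$. The paper executes both steps more directly than you do. Rather than treating $\rho u\mathbf{B}$ as a source vector field and bootstrapping, the paper keeps $\rho\mathbf{B}\in L^p(B,\mathbb{R}^d)\subset L^d(B,\mathbb{R}^d)$ as a first-order \emph{coefficient} and applies an existence theorem (\cite[Theorem 2.1(i)]{KK19}) once to produce a solution $\tilde u\in H^{1,\frac{2d}{d-2}}_0(B)$ with source $-\alpha\rho u+\rho f\in L^2(B)$; then Trudinger's weak maximum principle (\cite[Corollary 3.3]{T73}) forces $\tilde u=u$. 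For the $H^{2,2}$ step, instead of invoking a CFL-type theorem with lower-order terms, the paper freezes all lower-order contributions into a fixed $g\in L^2(B)$ depending on $u$, applies the pure CFL result $-\mathrm{trace}(\tilde\rho A\nabla^2 v)=g$, integrates by parts (Proposition \ref{intbypartform}) to see that $v$ satisfies the same divergence-form equation as $u$, and again uses Trudinger for uniqueness. Your Lax--Milgram uniqueness is a legitimate alternative, and your bootstrap closes (each iteration gains $\tfrac1d-\tfrac1p>0$ in the reciprocal exponent), but the paper's route avoids both the iteration and the need for a CFL variant with drift.

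There is one genuine error in your final paragraph: functions in $H^{2,2}(B)\cap H^{1,2}_0(B)$ cannot in general be approximated in $H^{2,2}$-norm by $C_0^\infty(B)$, because such approximation would force the normal derivative to vanish on $\partial B$. The conclusion $u\in H^{1,\frac{2d}{d-2}}_0(B)$ is nonetheless correct: on the smooth domain $B$ one has $H^{1,q}_0(B)=\{w\in H^{1,q}(B):\mathrm{tr}\,w=0\}$, and $u\in H^{1,2}_0(B)$ already has zero trace while the Sobolev embedding gives $u\in H^{1,\frac{2d}{d-2}}(B)$. Alternatively, if your bootstrap is phrased as Dirichlet $W^{1,q}_0$-regularity (existence plus uniqueness), it already delivers $u\in H^{1,\frac{2d}{d-2}}_0(B)$ before the non-divergence step, making your last paragraph superfluous.
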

\begin{proof}
Let $\alpha>0$  and $f \in L^{2}(B, \mu)$. By \cite[Chapter I, Theorem 2.8]{MR92}, $G_{\alpha} f \in D(\mathcal{E}^B)$ and
$$
\mathcal{E}^B (G^B_{\alpha} f, \varphi) + \alpha \int_{B} G_{\alpha} f \cdot \varphi d\mu =\int_{B} f \varphi d\mu, \quad \text{ for all  $\varphi \in D(\mathcal{E}^B)$}.
$$
According to Theorem \ref{matebisdiri}(iii), $G^B_{\alpha} f \in D(\mathcal{E}^B) = H^{1,2}_0(B)$ and
\begin{align}
&\int_{B} \langle \rho A \nabla G_{\alpha}f, \nabla \varphi \rangle dx- \int_{B} \langle \rho \mathbf{B}, \nabla G_{\alpha} f \rangle \varphi dx+\int_{B} \alpha \rho G_{\alpha} f \cdot \varphi dx \nonumber  \\
&=\int_{B} \langle A \nabla G_{\alpha}f, \nabla \varphi \rangle d\mu- \int_{B} \langle \mathbf{B}, \nabla G_{\alpha} f \rangle g d\mu+\alpha \int_{B} G_{\alpha} f \cdot \varphi d\mu \nonumber \\
&=\mathcal{E}^B (G^B_{\alpha} f, \varphi) + \alpha \int_{B} G_{\alpha} f \cdot \varphi d\mu  = \int_{B} f \varphi d\mu = \int_{B} \rho f \varphi dx, \quad \text{ for all $\varphi \in C_0^{\infty}(B)$}. \label{mainelipeq}
\end{align}
Let $x_0 \in \mathbb{R}^d$ and $r>0$ be such that $B=B_r(x_0)$. Let $\rho_e \in H^{1,p}(\mathbb{R}^d)_0$ be an extension of $\rho \in H^{1,p}(B_{2r}(x_0))$ (cf. \cite[Theorem 4.7]{EG15}) satisfying that $\rho_e=\rho$ in $\overline{B}_{2r}(x_0)$ and $\text{supp}(\rho_e) \subset B_{4r}(x_0)$. Take $\chi \in C_0^{\infty}(\mathbb{R}^d)$ such that $\chi(x)=1$ for all $x \in \overline{B}_{r}(x_0)$, $\text{supp}(\chi) \subset B_{2r}(x_0)$ and $0 \leq \chi(x) \leq 1$ for all $x \in \mathbb{R}^d$. Define a function $\tilde{\rho}: \mathbb{R}^d \rightarrow \mathbb{R}$ given by
$$
\tilde{\rho}(x):= \chi \Big( \rho_e(x) - \min_{\overline{B}_{2r}(x_0)} \rho \Big) + \min_{\overline{B}_{2r}(x_0)} \rho, \quad x \in \mathbb{R}^d.
$$
Then, by Proposition \ref{algprovmo}, $\tilde{\rho} \in H^{1,p}(\mathbb{R}^d) \cap VMO \cap L^{\infty}(\mathbb{R}^d)$ and it satisfies $\tilde{\rho}=\rho$ in $\overline{B}=\overline{B}_r(x_0)$ and
$$
\min_{\overline{B}_{2r}(x_0)} \rho \leq \tilde{\rho}(x) \leq  \max_{\overline{B}_{2r}(x_0)} \rho, \quad \text{ for all $x \in \mathbb{R}^d$}.
$$
Thus, we can rewrite \eqref{mainelipeq} as
\begin{equation} \label{ourbasiceq}
\int_{B} \langle \tilde{\rho} A \nabla G^B_{\alpha}f, \nabla \varphi \rangle dx- \int_{B} \langle \rho \mathbf{B}, \nabla G^B_{\alpha} f \rangle \varphi dx+\int_{B} \alpha \rho G^B_{\alpha} f \cdot \varphi dx = \int_{B} (\rho f) \varphi dx, \quad \text{ for all $\varphi \in C_0^{\infty}(B)$}.
\end{equation}
Note that by Proposition \ref{algprovmo}, $\tilde{\rho} a_{ij} \in VMO \cap L^{\infty}(\mathbb{R}^d)$ for all $1 \leq i,j \leq d$. Moreover,
$$
\lambda \Big( \min_{\overline{B}_{2r}(x_0)} \rho \Big)  \,\| \xi \|^2  \leq \langle \tilde{\rho} A(x) \xi, \xi \rangle, \quad \text{ for a.e. $x \in \mathbb{R}^d$ and all $\xi \in \mathbb{R}^d$},
$$
$\rho \mathbf{B} \in L^p(B, \mathbb{R}^d)$, and $-\alpha \rho G_{\alpha}^B f +\rho f \in L^{2}(B)$.
By using \cite[Theorem 2.1(i)]{KK19}, there exists $u \in H^{1,\frac{2d}{d-2}}_0(U)$ such that
\begin{equation} \label{ourbasiceqtran}
\int_{B} \langle \tilde{\rho} A \nabla u, \nabla \varphi \rangle dx- \int_{B} \langle \rho \mathbf{B}, \nabla u \rangle \varphi dx = \int_{B} (-\alpha \rho G_{\alpha}^B f +\rho f ) \varphi dx, \quad \text{ for all $\varphi \in C_0^{\infty}(B)$}.
\end{equation}
Applying the weak maximum principle in \cite[Corollary 3.3]{T73} (cf. \cite[Theorem 1]{T77}) to \eqref{ourbasiceq} and \eqref{ourbasiceqtran}, we obtain that $G^B_{\alpha}f=u \in H^{1,\frac{2d}{d-2}}_0(B)$. Now, let
$$
g:= \rho f  + \langle A^T \nabla \rho +\rho\, \text{div} A, \nabla G^B_{\alpha} f \rangle + \langle \rho \mathbf{B}, \nabla G^B_{\alpha} f \rangle - \alpha \rho G^B_{\alpha} f.
$$
Then, $g \in L^2(B)$. Using \cite[Theorem 4.4]{CFL93}, there exists $v \in H^{2,2}(B) \cap H^{1,\frac{2d}{d-2}}_0(B)$ such that
$$
-\text{trace}( \tilde{\rho} A \nabla^2 v) = g, \quad \text{ on } \;B,
$$ 
and hence using integration by parts in Proposition \ref{intbypartform}, we obtain that
$$
\int_{B} \langle \rho A \nabla v, \nabla \varphi \rangle dx + \int_{B} \langle  A^T \nabla \rho +\rho\, \text{div} A, \nabla v\rangle \varphi dx = \int_{B}  g  \varphi  dx, \quad \text{ for all $\varphi \in C_0^{\infty}(B)$}.
$$
Meanwhile, from \eqref{ourbasiceq}
$$
\int_{B} \langle \rho A \nabla G^B_{\alpha}f, \nabla \varphi \rangle dx + \int_{B} \langle  A^T \nabla \rho +\rho\, \text{div} A, \nabla   G^B_{\alpha}f \rangle \varphi dx = \int_{B}  g  \varphi  dx, \quad \text{ for all $\varphi \in C_0^{\infty}(B)$}.
$$
By the consequence of the weak maximum principle in \cite[Corollary 3.3]{T73} (cf. \cite[Theorem 1]{T77}), we finally get $G^B_{\alpha} g =v \in H^{2,2}(B) \cap H^{1,\frac{2d}{d-2}}_0(B)$, as desired. 
\end{proof}

\begin{prop} \label{generprop}
Given the assumption {\bf (Hyp)}, let $\rho \in H^{1,p}_{loc}(\mathbb{R}^d) \cap C(\mathbb{R}^d)$ be as in Theorem \ref{matebisdiri}, $\mu$ and $\mathbf{B}$ be defined as in \eqref{rhodivfreeb}. Let $B$ be an open ball in $\mathbb{R}^d$, $(\mathcal{E}^B, D(\mathcal{E}^B))$ be a Dirichlet form defined as in Theorem \ref{matebisdiri} and  $(L^B, D(L^B))$ be its corresponding generator on $L^2(B, \mu)$(cf. \cite[Chapter I, Corollary 2.10, Proposition 2.16]{MR92}). 
Suppose that $\text{\rm div} A \in L_{loc}^d(\mathbb{R}^d, \mathbb{R}^d)$, and let $u \in H^{2,2}(B) \cap H_0^{1,\frac{2d}{d-2}}(B)$ and $\mathcal{L}^B$ be defined as in \eqref{mathcallrep}.
Then, $u \in D(L^B)$ and $L^B u = \mathcal{L}^B u$, i.e.
\begin{align*}
L^Bu&=\text{\rm div} (A \nabla u) + \langle \mathbf{H}, \nabla u \rangle  \\
&=  \text{\rm trace}(A \nabla^2  u) + \langle \text{\rm div} A + \mathbf{H}, \nabla u \rangle  \\
&=  \text{\rm trace}(A \nabla^2  u) + \Big \langle \text{\rm div} A +\frac{1}{\rho} A^T\nabla \rho , \nabla u \Big \rangle  + \left \langle \mathbf{B}, \nabla u \right \rangle.
\end{align*}
Specifically,
\begin{align} \label{genproperty}
\mathcal{E}^B(u,v) = -\int_{B} L^B u \cdot v d\mu =-\int_{B} \mathcal{L}^B u \cdot v d\mu, \quad \text{ for all $v \in D(\mathcal{E}^B)$}.
\end{align}
\end{prop}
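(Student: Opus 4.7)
The plan is to invoke the standard characterization of the generator of a coercive closed form (cf.\ \cite[Chapter I, Proposition 2.16]{MR92}): $u \in D(L^B)$ with $L^B u = w$ if and only if $u \in D(\mathcal{E}^B)$ and there exists $w \in L^2(B,\mu)$ with $\mathcal{E}^B(u,v) = -(w,v)_{L^2(B,\mu)}$ for all $v$ in a dense subset of $D(\mathcal{E}^B)$. The proof thus splits into two preliminary verifications and one core computation. First, $u \in H^{1,2d/(d-2)}_0(B) \subset H^{1,2}_0(B) = D(\mathcal{E}^B)$ by Theorem \ref{matebisdiri}(iii). Second, $\mathcal{L}^B u \in L^2(B,\mu) = L^2(B)$ (identified since $\rho$ is bounded below and above by strictly positive constants on $\overline{B}$): the term $\text{trace}(A\nabla^2 u)$ is in $L^2(B)$ by boundedness of $A$ and $u \in H^{2,2}(B)$, and $\langle \text{div} A + \mathbf{H}, \nabla u \rangle \in L^2(B)$ by H\"older, using $\text{div} A \in L^d(B,\mathbb{R}^d)$, $\mathbf{H} \in L^p(B,\mathbb{R}^d) \subset L^d(B,\mathbb{R}^d)$, $\nabla u \in L^{2d/(d-2)}(B,\mathbb{R}^d)$, and the exponent identity $\tfrac{1}{d}+\tfrac{d-2}{2d}=\tfrac{1}{2}$.

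The heart of the argument is the identity
$$
\mathcal{E}^B(u,v) = -\int_B \mathcal{L}^B u \cdot v\, d\mu \qquad \text{for all } v \in C_0^\infty(B).
$$
I would begin with the right-hand side written as $-\int_B [\text{\rm div}(A\nabla u) + \langle \mathbf{H},\nabla u\rangle]\, v\rho\, dx$ and integrate the divergence term by parts against $v\rho$. The function $v\rho$ lies in $H^{1,2}_0(B)$ since $\rho \in C(\overline{B}) \cap H^{1,p}(B)$ and $v \in C_0^\infty(B)$, so it is an admissible test function for the weak divergence $\text{\rm div}(A\nabla u) = \text{\rm trace}(A\nabla^2 u) + \langle \text{\rm div} A, \nabla u\rangle$ supplied by Proposition \ref{convdivnon} (justify by approximating $v\rho$ in $H^{1,2}$-norm by elements of $C_0^\infty(B)$, or directly via Proposition \ref{intbypartform}). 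This yields
$$
-\int_B \text{\rm div}(A\nabla u) \cdot v\rho\, dx = \int_B \langle A\nabla u, \nabla v\rangle\, d\mu + \int_B \langle A\nabla u, \nabla \rho\rangle v\, dx.
$$
Combining with $\rho \langle \mathbf{H}, \nabla u\rangle = \rho\langle \mathbf{B}, \nabla u\rangle + \langle A\nabla u, \nabla\rho\rangle$ (which is precisely the content of the definition of $\mathbf{B}$ in \eqref{rhodivfreeb}, read consistently with \eqref{infinvamea}), the two cross-terms involving $\nabla\rho$ cancel, and the expression reduces to
$$
\int_B \langle A\nabla u, \nabla v\rangle\, d\mu - \int_B \langle \mathbf{B}, \nabla u\rangle v\, d\mu = \mathcal{E}^B(u,v),
$$
where the last equality is the explicit formula for $\mathcal{E}^B$ on $D(\mathcal{E}^B)$ from Theorem \ref{matebisdiri}(iii).

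To extend the identity from $v \in C_0^\infty(B)$ to every $v \in D(\mathcal{E}^B) = H^{1,2}_0(B)$, I would use continuity of both sides in $v$ with respect to $\mathcal{E}^B(\cdot,\cdot)^{1/2} + \|\cdot\|_{L^2(B,\mu)}$: the left-hand side is continuous by the strong sector condition of Theorem \ref{matebisdiri}(i), and the right-hand side is $L^2(B,\mu)$-continuous (hence continuous in the stronger norm) by Cauchy--Schwarz together with $\mathcal{L}^B u \in L^2(B,\mu)$. Density of $C_0^\infty(B)$ in $D(\mathcal{E}^B)$ then gives $u \in D(L^B)$ with $L^B u = \mathcal{L}^B u$, i.e.\ \eqref{genproperty}. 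The main obstacle is the integration-by-parts step against the non-smooth product $v\rho$: this is the point at which the improved regularity $\rho \in H^{1,p}_{loc}(\mathbb{R}^d) \cap C(\mathbb{R}^d)$ with $p > d$ from Theorem \ref{exisinfinvar} is indispensable, since it ensures $v\rho \in H^{1,2}_0(B)$ and that all arising products have matching integrability exponents.
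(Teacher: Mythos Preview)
Your proposal is correct and follows essentially the same approach as the paper: verify $u \in D(\mathcal{E}^B)$ and $\mathcal{L}^B u \in L^2(B,\mu)$, establish the identity $\mathcal{E}^B(u,v) = -\int_B \mathcal{L}^B u \cdot v\, d\mu$ for $v \in C_0^\infty(B)$ via an integration by parts in which the $\nabla\rho$ cross-terms cancel, extend by density, and conclude via \cite[Chapter I, Proposition 2.16]{MR92}. The only cosmetic difference is the direction of the computation---the paper starts from $\mathcal{E}^B(u,\varphi)$ and invokes Proposition \ref{intbypartform} directly, whereas you start from $-\int_B \mathcal{L}^B u \cdot v\rho\, dx$ and integrate by parts against the test function $v\rho \in H^{1,2}_0(B)$; both routes encode the same identity and your parenthetical remark ``read consistently with \eqref{infinvamea}'' correctly handles the $A$ versus $A^T$ issue in the definition of $\mathbf{B}$.
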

\begin{proof}
Let $u \in H^{2,2}(B) \cap H_0^{1,\frac{2d}{d-2}}(B)$. Then, $u \in D(\mathcal{E}^B) =H^{1,2}_0(B)$ by Theorem \ref{matebisdiri}(iii).
Applying Theorem \ref{matebisdiri}(iii) and Proposition \ref{intbypartform}, we find that
\begin{align}
\mathcal{E}^B(u,\varphi)&=\int_{B} \langle A \nabla u, \nabla \varphi \rangle d\mu - \int_{B} \langle \mathbf{B}, \nabla u \rangle \varphi d\mu \nonumber \\
&=-\int_{B} \Big( \rho\, \text{trace}(A \nabla^2  u) + \langle \rho \,\text{div} A +A^T\nabla \rho , \nabla u \rangle \Big) \varphi \,dx  - \int_{B} \langle \mathbf{H} - \frac{1}{\rho} A^T \nabla \rho, \nabla u \rangle \varphi d\mu  \nonumber \\
&= -\int_{B}\Big( \text{trace}(A \nabla^2  u) + \langle \text{div} A + \mathbf{H}, \nabla u \rangle \Big) \varphi d\mu, \quad \text{ for all $\varphi \in C_0^{\infty}(B)$}. \label{intbypartgener}
\end{align}
Consequently, \eqref{intbypartgener} extends to all $\varphi \in D(\mathcal{E}^B)$ through approximation. Since 
$$
\text{trace}(A \nabla^2  u) + \langle \text{div} A + \mathbf{H}, \nabla u \rangle \in L^2(B, \mu), 
$$
we conclude from \cite[Chapter I, Proposition 2.16]{MR92} and Proposition \ref{convdivnon} that the assertion is validated.
\end{proof}

\begin{prop} \label{algproper}
Under the assumption {\bf (Hyp)}, suppose that $\text{\rm div} A \in L^d(B, \mathbb{R}^d)$.  Let $\chi \in C_0^{\infty}(B)$ and $u \in H^{2,2}(B)\cap H_0^{1,\frac{2d}{d-2}}(B) $. Then, $\chi, u, \chi u \in H^{2,2}(B) \cap H_0^{1,\frac{2d}{d-2}}(B) \subset D(L^B)$ (by Proposition \ref{generprop}) and
$$
L^B (\chi u)=u L^B \chi + \chi L^B u +\langle A \nabla \chi, \nabla u \rangle + \langle A \nabla u, \nabla \chi \rangle.
$$
\end{prop}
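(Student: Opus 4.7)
The plan is to first verify the membership claims and then simply compute $\mathcal{L}^B(\chi u)$ pointwise almost everywhere using the classical Leibniz rule, which is legitimate because $\chi$ is smooth.

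First, I would verify that $\chi, u, \chi u$ all belong to $H^{2,2}(B) \cap H_0^{1,\frac{2d}{d-2}}(B)$. For $\chi \in C_0^\infty(B)$ this is immediate: $\chi$ is smooth with compact support in $B$, so it lies in every $C_0^\infty(B)$-based Sobolev space; in particular $\chi \in H^{2,2}(B)$, and since $\text{supp}(\chi)$ is compact in $B$ we can approximate $\chi$ by itself to get $\chi \in H_0^{1,\frac{2d}{d-2}}(B)$. For $\chi u$, the standard Leibniz rule applies: since $\chi$ is smooth and $u \in H^{2,2}(B)$, the weak derivatives
\begin{align*}
\partial_i(\chi u) &= \chi\, \partial_i u + u\, \partial_i \chi, \\
\partial_i \partial_j (\chi u) &= \chi\, \partial_i \partial_j u + u\, \partial_i \partial_j \chi + \partial_i \chi\, \partial_j u + \partial_j \chi\, \partial_i u,
\end{align*}
all lie in $L^2(B)$ (using that $\chi$ and its derivatives are bounded and compactly supported). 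Moreover $\chi u$ has compact support in $B$ and inherits $L^{\frac{2d}{d-2}}$-integrability from $u$, so $\chi u \in H_0^{1,\frac{2d}{d-2}}(B)$ as well. By Proposition \ref{generprop}, $\chi, u, \chi u \in D(L^B)$ with $L^B = \mathcal{L}^B$ acting on each of them.

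Next, I would substitute the Leibniz identities into the non-divergence representation
$$
\mathcal{L}^B f = \text{trace}(A \nabla^2 f) + \langle \text{div}\, A + \mathbf{H}, \nabla f \rangle,
$$
from \eqref{mathcallrep}. For the first-order piece,
$$
\langle \text{div}\, A + \mathbf{H}, \nabla(\chi u) \rangle = \chi \langle \text{div}\, A + \mathbf{H}, \nabla u \rangle + u \langle \text{div}\, A + \mathbf{H}, \nabla \chi \rangle,
$$
holds a.e. For the trace term, summing $a_{ij}\, \partial_i\partial_j(\chi u)$ over $i,j$ and regrouping yields
$$
\text{trace}(A \nabla^2(\chi u)) = \chi\,\text{trace}(A \nabla^2 u) + u\,\text{trace}(A \nabla^2 \chi) + \langle A \nabla \chi, \nabla u \rangle + \langle A \nabla u, \nabla \chi \rangle,
$$
where the two cross terms come from the identities $\sum_{i,j} a_{ij}\,\partial_j \chi\, \partial_i u = \langle A \nabla \chi, \nabla u\rangle$ and $\sum_{i,j} a_{ij}\, \partial_i \chi\, \partial_j u = \langle A \nabla u, \nabla \chi\rangle$. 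Adding the two pieces and grouping those that multiply $\chi$ as a factor into $\chi\, \mathcal{L}^B u$, and those that multiply $u$ as a factor into $u\, \mathcal{L}^B \chi$, gives exactly
$$
\mathcal{L}^B(\chi u) = u\, \mathcal{L}^B \chi + \chi\, \mathcal{L}^B u + \langle A \nabla \chi, \nabla u \rangle + \langle A \nabla u, \nabla \chi \rangle,
$$
and since $L^B$ agrees with $\mathcal{L}^B$ on $H^{2,2}(B) \cap H_0^{1,\frac{2d}{d-2}}(B)$ by Proposition \ref{generprop}, the claim follows.

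There is no genuine obstacle in the argument; the only point requiring minor care is the almost-everywhere validity of the Leibniz rule for $u$ of merely $H^{2,2}$ regularity, but this is standard precisely because $\chi$ and its first and second derivatives are smooth, bounded, and compactly supported, so all products in the Leibniz expansion are in $L^2(B)$.
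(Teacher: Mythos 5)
Your proof is correct, but it runs through a different computation than the paper's. The paper stays in the divergence-form representation: it writes $L^B(\chi u)=\mathrm{div}(A\nabla(\chi u))+\langle \mathbf{H},\nabla(\chi u)\rangle$, splits $A\nabla(\chi u)=\chi A\nabla u+uA\nabla\chi$, and then applies a weak product rule for distributional divergences (Proposition \ref{propdruled}, proved by mollification) together with Proposition \ref{convdivnon} to expand $\mathrm{div}(\chi A\nabla u)$ and $\mathrm{div}(uA\nabla\chi)$; regrouping gives the identity. You instead work in the non-divergence representation \eqref{mathcallrep} and expand $\mathrm{trace}(A\nabla^2(\chi u))$ and $\langle \mathrm{div}\,A+\mathbf{H},\nabla(\chi u)\rangle$ via the second-order Leibniz rule, which is legitimate since $\chi$ is smooth and all resulting products lie in $L^2(B)$ (e.g.\ $\langle \mathrm{div}\,A,\nabla u\rangle\in L^2$ because $\mathrm{div}\,A\in L^d$ and $\nabla u\in L^{\frac{2d}{d-2}}$). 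Your route avoids the auxiliary lemma on $\mathrm{div}(u\mathbf{F})$ but leans on the identification of the divergence and non-divergence forms of $\mathcal{L}^B$ on $H^{2,2}(B)$, which is exactly Proposition \ref{convdivnon} — so both arguments ultimately rest on the same mollification machinery; the paper's version keeps the computation in the form in which the Dirichlet form and generator are naturally expressed, while yours is arguably more elementary once \eqref{mathcallrep} is taken as given. Both correctly reduce the claim about the generator $L^B$ to an a.e.\ identity for $\mathcal{L}^B$ via Proposition \ref{generprop}.
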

\begin{proof}
First by the product rule and Sobolev's embedding, $\chi u \in H^{2,2}(U) \cap H_0^{1,\frac{2d}{d-2}}(B)$. Thus, by Proposition \ref{generprop}, 
$\chi, u, \chi u \in H^{2,2}(B) \cap H_0^{1,\frac{2d}{d-2}}(B) \subset D(L^B)$. Meanwhile,
using Propositions \ref{convdivnon} and \ref{propdruled},
$$
{\rm div} \Big(\chi A \nabla u \Big) =\chi \,{\rm div} \Big(A \nabla u \Big)  + \langle \nabla \chi, A \nabla u \rangle  \in L^2(B)
$$
and
$$
{\rm div} \Big(u A \nabla \chi \Big) =u\, {\rm div} \Big(A \nabla \chi \Big)  + \langle \nabla u, A \nabla \chi \rangle  \in L^2(B).
$$
Therefore, we obtain that 
\begin{align*}
L^B (\chi u) &= {\rm div} \Big(A \nabla (\chi u)\Big) + \langle \mathbf{H}, \nabla (\chi u) \rangle \\
&={\rm div} \Big(\chi A \nabla u \Big) + {\rm div} \Big(u A \nabla \chi \Big)+
\chi \langle \mathbf{H}, \nabla  u \rangle +u \langle \mathbf{H}, \nabla  \chi \rangle  \\
&= u \Big( \text{\rm div} (A \nabla \chi) + \langle \mathbf{H}, \nabla \chi \rangle \Big)+
 \chi \Big( \text{\rm div} (A \nabla u) + \langle \mathbf{H}, \nabla u \rangle \Big)+\langle A \nabla \chi, \nabla u \rangle + \langle A \nabla u, \nabla \chi \rangle \\
&=u L^B \chi + \chi L^B u +\langle A \nabla \chi, \nabla u \rangle + \langle A \nabla u, \nabla \chi \rangle,
\end{align*}
as desired.
\end{proof}

\section{Main results} \label{sec4}
\begin{theo} \label{mainresreg}
Assume that {\bf (Hyp)} holds and let $x_0 \in \mathbb{R}^d$ and $R>0$. Suppose that ${\rm div} A \in L_{loc}^d(B_R(x_0), \mathbb{R}^d)$ and let $\tilde{h} \in L^2_{loc}(B_R(x_0))$, $c \in L^{d}_{loc}(B_R(x_0))$, $\tilde{f} \in L^{\frac{2d}{d+2}}_{loc}(B_R(x_0))$ and $\tilde{\mathbf{F}} \in L^2_{loc}(B_R(x_0), \mathbb{R}^d)$ satisfy
$$
\int_{B_R(x_0)} \Big( \text{\rm div} \big( A \nabla u \big) + \langle \mathbf{H}, \nabla u \rangle +cu \Big) \tilde{h} dx = \int_{B_R(x_0)} \tilde{f} udx +\int_{B_R(x_0)} \langle \tilde{\mathbf{F}}, \nabla u \rangle dx \quad \text{ for all $u \in C_0^{\infty}(B_R(x_0))$}.
$$
Then, $\tilde{h} \in H^{1,2}_{loc}(B_R(x_0))$.
\end{theo}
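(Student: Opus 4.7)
The plan is to prove the theorem by a resolvent approximation argument. Fix an arbitrary ball $B = B_{r_0}(x_1)$ with $\overline{B} \subset B_R(x_0)$, and extend $A$ and $\mathbf{H}$ outside a slightly larger ball so that {\bf (Hyp)} holds globally while agreeing with the original data on $\overline{B}$ (interpolating to the identity matrix and the zero vector field outside). Invoke Theorem \ref{exisinfinvar} to obtain $\rho \in H^{1,p}_{loc}(\mathbb{R}^d) \cap C(\mathbb{R}^d)$ with $\rho > 0$, and set $\mu = \rho\, dx$ and $\mathbf{B} = \mathbf{H} - \rho^{-1} A \nabla \rho$. Apply Theorems \ref{matebisdiri} and \ref{resolregul} to construct the sectorial Dirichlet form $(\mathcal{E}^B, H^{1,2}_0(B))$ on $L^2(B, \mu)$ and its resolvent $(G_\alpha^B)_{\alpha > 0}$, which sends $L^2(B, \mu)$ into $H^{2,2}(B) \cap H^{1, 2d/(d-2)}_0(B)$. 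Since $\rho$ is bounded between positive constants on $\overline{B}$, $\tilde h \in L^2(B, \mu)$.

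Define the smooth approximants $h_\alpha := \alpha G_\alpha^B \tilde h$ for $\alpha > 0$. By Theorem \ref{resolregul} and Proposition \ref{generprop}, $h_\alpha \in D(L^B)$ with $\mathcal{L}^B h_\alpha = L^B h_\alpha = \alpha(h_\alpha - \tilde h)$, and by the $C_0$-resolvent property $h_\alpha \to \tilde h$ in $L^2(B, \mu)$ as $\alpha \to \infty$, with $\|h_\alpha\|_{L^2(B, \mu)} \leq \|\tilde h\|_{L^2(B, \mu)}$. The proof reduces to showing that $\{h_\alpha\}$ is uniformly bounded in $H^{1,2}(B')$ for every ball $B'$ with $\overline{B'} \subset B$; weak $H^{1,2}$-compactness together with this $L^2$-convergence then forces the weak limit of a subsequence to coincide with $\tilde h$, giving $\tilde h \in H^{1,2}(B')$, and a covering argument for $B_R(x_0)$ concludes.

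To obtain the uniform $H^{1,2}$ bound, fix a cutoff $\chi \in C_0^\infty(B)$ with $\chi \equiv 1$ on $B'$ and combine two identities. The first is the Dirichlet-form identity: since $\chi^2 h_\alpha \in H^{1,2}_0(B) = D(\mathcal{E}^B)$, we have $\mathcal{E}^B(h_\alpha, \chi^2 h_\alpha) = \alpha \int_B (\tilde h - h_\alpha) \chi^2 h_\alpha \, d\mu$, and expanding the left-hand side via the product rule yields
\[
\int_B \chi^2 \langle A \nabla h_\alpha, \nabla h_\alpha \rangle \, d\mu = -2 \int_B \chi h_\alpha \langle A \nabla h_\alpha, \nabla \chi \rangle \, d\mu + \int_B \chi^2 h_\alpha \langle \mathbf{B}, \nabla h_\alpha \rangle \, d\mu + \alpha \int_B \chi^2 (\tilde h - h_\alpha) h_\alpha \, d\mu.
\]
The second identity comes from testing the hypothesis with $u = \chi^2 h_\alpha$, which is admissible after extending the given equation by density from $C_0^\infty(B)$ to the subspace of $H^{2,2}(B) \cap H^{1, 2d/(d-2)}_0(B)$ of functions with compact support in $B$ (the required continuity of both sides is a routine consequence of H\"older's and Sobolev's inequalities together with the integrability hypotheses on $A, \mathbf{H}, \mathrm{div}\, A, c, \tilde f, \tilde{\mathbf{F}}, \tilde h$). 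Expanding $\mathcal{L}^B(\chi^2 h_\alpha)$ by Proposition \ref{algproper} and using $\mathcal{L}^B h_\alpha = \alpha(h_\alpha - \tilde h)$, the hypothesis produces an identity
\[
\alpha \int_B \chi^2 (h_\alpha - \tilde h) \tilde h \, dx = R_\alpha,
\]
where $R_\alpha$ collects terms in the data, in $h_\alpha, \nabla h_\alpha$, and in derivatives of $\chi$, all bounded by Hölder, Young, and the Sobolev embedding $H^{1,2}_0(B) \hookrightarrow L^{2d/(d-2)}(B)$, with the $\|\chi \nabla h_\alpha\|_{L^2}^2$-parts absorbable into the left-hand side of the first identity.

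The main difficulty is that the $\alpha$-term $\alpha \int_B \chi^2 (\tilde h - h_\alpha) h_\alpha \, d\mu$ appearing in the first identity is not a priori bounded uniformly in $\alpha$. The resolution is the purely algebraic identity
\[
\alpha \int_B \chi^2 (\tilde h - h_\alpha) h_\alpha \, d\mu = \alpha \int_B \chi^2 (\tilde h - h_\alpha) \tilde h \, d\mu - \alpha \int_B \chi^2 (\tilde h - h_\alpha)^2 \, d\mu,
\]
whose last term is non-positive and hence discardable in an upper bound, and whose first term, up to the bounded factor $\rho$, equals $-\alpha \int_B \chi^2 (h_\alpha - \tilde h) \tilde h \, dx$, which is controlled precisely by the second identity. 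Choosing $\chi$ with sufficiently small support to make the relevant $L^d$-norms of $\mathbf{H}, \mathrm{div}\, A, c, \mathbf{B}$ on $\mathrm{supp}(\chi)$ small enough for absorption, and applying Young's inequality with a small parameter to absorb the remaining $\|\chi \nabla h_\alpha\|_{L^2}^2$ contributions into the left-hand side, we arrive at $\|\chi \nabla h_\alpha\|_{L^2(B)} \leq C$ with $C$ independent of $\alpha$. Weak compactness in $H^{1,2}(B')$ and the $L^2$-convergence $h_\alpha \to \tilde h$ then identify $\tilde h$ as the weak $H^{1,2}(B')$-limit of a subsequence, completing the proof.
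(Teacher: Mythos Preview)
Your overall strategy---resolvent approximation, the algebraic identity to replace $h_\alpha$ by $\tilde h$ in the dangerous $\alpha$-term, and testing the original equation to control that replacement---is exactly right and coincides with the paper's method. However, there is a genuine gap in the step where you write that $\alpha\int_B\chi^2(\tilde h-h_\alpha)\tilde h\,d\mu$ ``up to the bounded factor $\rho$'' equals $-\alpha\int_B\chi^2(h_\alpha-\tilde h)\tilde h\,dx$. The first integral is $\alpha\int_B\chi^2(\tilde h-h_\alpha)\tilde h\,\rho\,dx$, with $\rho$ \emph{inside} the integral, and since $\chi^2(\tilde h-h_\alpha)\tilde h$ has no sign you cannot bound one by a constant multiple of the other. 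This is precisely the point at which the two identities must match, and as written they do not.

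The fix, which is what the paper does, is to apply the resolvent not to $\tilde h$ but to $h:=\tilde h/\rho\in L^2(B,\mu)$. Then $h\,d\mu=\tilde h\,dx$, so the test identity becomes $\int_B(L^Bv+cv)\,h\,d\mu=\int_B fv\,d\mu+\int_B\langle\mathbf F,\nabla v\rangle\,d\mu$ with $f=\tilde f/\rho$, $\mathbf F=\tilde{\mathbf F}/\rho$, and after the same algebraic manipulation the $\alpha$-term from the Dirichlet form, $\alpha\int_B\chi^2(h-\alpha G_\alpha^Bh)\,h\,d\mu$, matches \emph{exactly} the quantity produced by testing with $v=\chi^2\alpha G_\alpha^Bh$. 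A secondary point: your absorption argument via ``sufficiently small $\mathrm{supp}(\chi)$'' is in tension with requiring $\chi\equiv1$ on a fixed $B'$; the paper avoids this by estimating $\mathcal E^B(\chi\alpha G_\alpha^Bh,\chi\alpha G_\alpha^Bh)$ directly and bounding every term via the Sobolev embedding $\|\chi\alpha G_\alpha^Bh\|_{L^{2d/(d-2)}(B,\mu)}\le K\,\mathcal E^B(\chi\alpha G_\alpha^Bh,\chi\alpha G_\alpha^Bh)^{1/2}$, yielding an inequality of the form $E\le C_1\sqrt{E}+C_2$ without any smallness hypothesis.
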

\begin{proof}
Consider an arbitrary $s \in (0, R)$. Choose $r \in (s, R)$ and write $B:=B_r(x_0)$. Let $\chi \in C_0^{\infty}(B)$ with $\chi \equiv 1$ on $\overline{B}_{s}(x_0)$ and $0 \leq \chi \leq 1$ on $\mathbb{R}^d$.
To establish the assertion, it is enough to show that $\chi h \in D(\mathcal{E}^B)=H_0^{1,2}(B)$. Let $v \in H^{2,2}(B) \cap H^{1,2}_0(B)$ with $\text{supp}(v) \subset B$ be arbitrarily given. Utilizing mollification and the Sobolev embedding theorem, we can find a sequence $(v_n)_{n \geq 1} \subset C_0^{\infty}(B)$ such that
\begin{equation} \label{ourapprox}
\lim_{n \rightarrow \infty} v_n =v \; \text{ in $H^{2,2}(B)$} \quad \;  \lim_{n \rightarrow \infty} \nabla v_n =\nabla v, \;\; \text{ in $L^{\frac{2d}{d-2}}(B, \mathbb{R}^d)$}, \quad \text{ and } \;  \lim_{n \rightarrow \infty} v_n = v \; \text{ in $L^{\frac{2d}{d-2}}(B)$}.
\end{equation}
Observe that, according to Proposition \ref{convdivnon},
\begin{align*}
\text{div} (A \nabla v) + \langle \mathbf{H}, \nabla v \rangle = \text{trace}(A \nabla^2  v) + \langle \text{div} A + \mathbf{H}, \nabla v \rangle  \in L^2(B). \quad 
\end{align*}
Accordingly, through approximation in \eqref{ourapprox}, we deduce that
\begin{align} \label{intidenonb}
\int_{B} \Big( \text{\rm div} \big( A \nabla v \big) + \langle \mathbf{H}, \nabla v \rangle +cv \Big) \tilde{h} dx = \int_{B} \tilde{f} vdx +\int_{B} \langle \tilde{\mathbf{F}}, \nabla v \rangle dx. 
\end{align}
Consider $\rho \in H^{1,p}_{loc}(\mathbb{R}^d) \cap C(\mathbb{R}^d)$, where $\rho(x)>0$ for every $x \in \mathbb{R}^d$, as defined in Theorem \ref{exisinfinvar} and define $\mu$, $\mathbf{B}$ as in \eqref{rhodivfreeb}. Let $(\mathcal{E}^B, D(\mathcal{E}^B))$ and $(L^B, D(L^B))$ 
be as in Theorem \ref{matebisdiri} and Proposition \ref{generprop}, respectively. Let $h:=\frac{\tilde{h}}{\rho} \in L^2(B, \mu)$, $f:=\frac{\tilde{f}}{\rho}\in L^{\frac{2d}{d+2}}(B, \mu)$ and $\mathbf{F}:=\frac{1}{\rho} \tilde{\mathbf{F}} \in L^2(B, \mathbb{R}^d, \mu)$. Now, Proposition \ref{generprop} and \eqref{intidenonb} imply that $v \in D(L^B)$ and
\begin{equation} \label{intidengenre}
\int_{B} \Big( L^B v+cv \Big) h d\mu = \int_{B} f v + \langle \mathbf{F}, \nabla v \rangle d\mu.
\end{equation}
Let $\alpha>0$. Then, $G_{\alpha}^B h \in H^{2,2}(B) \cap H^{1,\frac{2d}{d-2}}_0(B)$ by Theorem \ref{resolregul}. Since $\chi \alpha G_{\alpha}^B h \in H^{2,2}(B) \cap H^{1,\frac{2d}{d-2}}_0(B)$ and has a compact support in $B$, replacing $v$ with $\chi \alpha G_{\alpha}^B h$ in \eqref{intidengenre}, we obtain that
\begin{equation} \label{impotidenge}
- \int_{B} L^B \big( \chi \alpha G_{\alpha}^B h  \big) \cdot h d\mu = \int_{B} c (\chi \alpha G_{\alpha}^B h) h d\mu - \int_{B} f (\chi \alpha G_{\alpha}^B h) - \int_{B} \langle \mathbf{F}, \nabla (\chi \alpha G_{\alpha}^B h) \rangle d\mu.
\end{equation}
Thus, by using Proposition \ref{algproper} and the property that $(\alpha-L^B) G_{\alpha}^B h = h$ on $B$, we find that 
\begin{align}
L^B(\chi G_{\alpha}^B h) &= G_{\alpha}^B h L^B \chi + \chi L^B G_{\alpha}^B h +\langle A \nabla \chi, \nabla G_{\alpha}^B h \rangle + \langle A \nabla G_{\alpha}^B h, \nabla \chi \rangle  \nonumber \\
&=G_{\alpha}^B h L^B \chi + \chi (\alpha G_{\alpha} h  -h)+\langle A \nabla \chi, \nabla G_{\alpha}^B h \rangle + \langle A \nabla G_{\alpha}^B h, \nabla \chi \rangle. \label{prodruleres}
\end{align}
By using \eqref{genproperty} in Proposition \ref{generprop} and \eqref{prodruleres}, we conclude that
\begin{align}
&\mathcal{E}^B \big(\chi \alpha G_{\alpha}^B h, \chi \alpha G_{\alpha}^B h \big)   = -\int_{B} L^B (\chi \alpha G_{\alpha}^B f) \cdot \chi \alpha G_{\alpha}^B h d\mu \nonumber  \\
& \quad  = -\int_{B} \alpha G_{\alpha}^B h \cdot (L^B \chi) \cdot \chi \alpha G_{\alpha}^B h d\mu - \int_{B} \langle A \nabla \chi, \nabla \alpha G_{\alpha}^B h \rangle \chi \alpha G_{\alpha}^B h d\mu- \int_{B} \langle A \nabla \alpha G_{\alpha}^B h, \nabla \chi \rangle \chi \alpha G_{\alpha}^B h d\mu \nonumber  \\
& \quad \qquad \qquad - \alpha \int_{B} (\chi \alpha G_{\alpha}^B h - \chi h) \chi \alpha G_{\alpha}^B h d\mu \nonumber  \\
& \quad  = -\int_{B} \alpha G_{\alpha}^B h \cdot (L^B \chi) \cdot \chi \alpha G_{\alpha}^B h d\mu - \int_{B} \langle A \nabla \chi, \nabla (\chi \alpha G_{\alpha}^B h) \rangle \alpha G_{\alpha}^B h d\mu- \int_{B} \langle A \nabla ( \chi \alpha G_{\alpha}^B h), \nabla \chi \rangle \alpha G_{\alpha}^B h d\mu \nonumber  \\
& \quad  \qquad \qquad  +2\int_{B} \langle A \nabla \chi, \nabla \chi \rangle (\alpha G_{\alpha}^B h)^2d\mu - \alpha \int_{B} (\chi \alpha G_{\alpha}^B h - \chi h) \chi \alpha G_{\alpha}^B h d\mu. \label{alaoglucptnes}
\end{align}
First, by the H\"{o}lder inequality, the $L^2(B, \mu)$-contraction property of $(G^B_{\alpha})_{\alpha>0}$ and the Sobolev inequality (cf. \cite[Theorem 4.8]{EG15}),
\begin{align*}
&-\int_{B} \alpha G_{\alpha}^B h \cdot (L^B \chi) \cdot \chi \alpha G_{\alpha}^B h d\mu \leq \|\alpha G_{\alpha}^B h\|_{L^2(B, \mu)} \| L^B \chi \|_{L^d(B, \mu)} \| \chi \alpha G_{\alpha}^B h \|_{L^{\frac{2d}{d-2}}(B, \mu)}\\
& \qquad \leq \underbrace{\|h\|_{L^2(B, \mu)}  \| L^B \chi \|_{L^d(B, \mu)} K_{d, \rho}}_{=:c_1}\, \mathcal{E}^{B} (\chi \alpha G_{\alpha}^B h,  \chi \alpha G_{\alpha}^B h)^{1/2},
\end{align*}
where $K_{d, \rho}:=\frac{(\max_{\overline{B}} \rho)^{\frac12-\frac{1}{d}}}{\lambda^{1/2} (\min_{\overline{B}} \rho)^{\frac12}} \cdot \frac{2(d-1)}{d-2}$.
Using the Cauchy-Schwartz inequality, the H\"{o}lder inequality, the Sobolev inequality and the $L^2(B, \mu)$-contraction property of $(G^B_{\alpha})_{\alpha>0}$,
\begin{align*}
 &- \int_{B} \langle A \nabla \chi, \nabla (\chi \alpha G_{\alpha}^B h) \rangle \alpha G_{\alpha}^B h d\mu \leq \int_{B} 
 \langle A \nabla \chi, \nabla \chi \rangle^{1/2} \langle A \nabla \chi \alpha  G_{\alpha}^B h, \nabla  \chi \alpha G_{\alpha}^B h \rangle^{1/2}
  | \alpha G_{\alpha}^Bh | d\mu \\
& \qquad \leq (dM)^{1/2} \| \nabla  \chi \|_{L^{\infty}(B)} \mathcal{E}^B(\chi \alpha G_{\alpha}^B h, \chi \alpha G_{\alpha} h)^{1/2} \| \alpha G_{\alpha}^B h \|_{L^2(B, \mu)} \\
& \qquad \leq \underbrace{ (dM)^{1/2} \| \nabla  \chi \|_{L^{\infty}(B)}
\|  h \|_{L^2(B, \mu)} }_{=:c_2}\,\mathcal{E}^B(\chi \alpha G_{\alpha}^B h, \chi \alpha G_{\alpha} h)^{1/2}.
\end{align*}
Likewise, we find that
\begin{align*}
&- \int_{B} \langle A \nabla ( \chi \alpha G_{\alpha}^B h), \nabla \chi \rangle \alpha G_{\alpha}^B h d\mu =- \int_{B} \langle A^T \nabla \chi, \nabla (\chi \alpha G_{\alpha}^B h) \rangle \alpha G_{\alpha}^B h\, d\mu \\
&\qquad \leq c_2 \mathcal{E}^B(\chi \alpha G_{\alpha}^B h, \chi \alpha G_{\alpha} h)^{1/2}.
\end{align*}
Moreover, applying the $L^2(B, \mu)$-contraction property of $(G^B_{\alpha})_{\alpha>0}$,
$$
2\int_{B} \langle A \nabla \chi, \nabla \chi \rangle (\alpha G_{\alpha}^B h)^2d\mu \leq \int_B 2 d M \| \nabla \chi \|^2 (\alpha G_{\alpha}^B h)^2\, d\mu
 \leq \underbrace{ 2 dM \| \nabla \chi \|^2_{L^{\infty}(B)} \| h\|^2_{L^2(B, \mu)}}_{=:c_3}.
$$
Furthermore, through straightforward computation
$$
-\alpha \left( \int_{B} (\alpha G_{\alpha}^B h -h) \alpha G_{\alpha}^B h \cdot \chi^2 d\mu -\int_{B} (\alpha G_{\alpha}^B h -h) h \cdot \chi^2 d\mu
\right)= -\alpha \int_{B} (\alpha G_{\alpha}^B h - h)^2 \chi^2 d\mu \leq 0,
$$
and hence
\begin{align} \label{imporinequ}
-\alpha \int_{B} (\alpha G_{\alpha}^B h -h) \alpha G_{\alpha}^B h \cdot \chi^2 d\mu \leq  -\alpha \int_{B} (\alpha G_{\alpha}^B h -h) h \cdot \chi^2 d\mu.
\end{align}
Since $(\alpha-L^{B}) (\alpha G_{\alpha}^B h) = \alpha h$, we have $-\alpha \Big( \alpha G_{\alpha}^B h-h \Big) = -L^B (\alpha G_{\alpha}^B h)$. Thus, applying Proposition \ref{algproper} and \eqref{impotidenge} to \eqref{imporinequ},  we discover that
\begin{align*}
&-\alpha \int_{B} (\alpha G_{\alpha}^B h -h) \alpha G_{\alpha}^B h \cdot \chi^2 d\mu  \leq -\alpha \int_{B} (\alpha G_{\alpha}^B h -h) h  \chi^2 d\mu = \int_{B} -L^B (\alpha G_{\alpha}^B h)\cdot h \chi^2 d\mu \\
& =\int_{B}-L^B (\chi^2 \alpha G_{\alpha}^B h) \cdot h d\mu + \int_{B} \alpha G_{\alpha}^B h \cdot (L^B \chi^2) h d\mu + \int_{B} \langle A \nabla \chi^2, \nabla \alpha G_{\alpha}^B h \rangle h d\mu +\int_{B} \langle A \nabla \alpha G_{\alpha}^B h ,\nabla \chi^2 \rangle hd\mu \\
&=  \int_{B} c (\chi \alpha G_{\alpha}^B h) h d\mu - \int_{B} f (\chi \alpha G_{\alpha}^B h) d\mu - \int_{B} \langle \mathbf{F}, \nabla (\chi \alpha G_{\alpha}^B h) \rangle d\mu \\
& \quad \qquad  + \int_{B} \alpha G_{\alpha}^B h \cdot (L^B \chi^2) h d\mu + \int_{B} \langle A \nabla \chi^2, \nabla \alpha G_{\alpha}^B h \rangle h d\mu +\int_{B} \langle A \nabla \alpha G_{\alpha}^B h ,\nabla \chi^2 \rangle hd\mu.
\end{align*}
By the H\"{o}lder inequality and the Sobolev inequality, 
\begin{align*}
\int_{B} c (\chi \alpha G_{\alpha}^B h) h d\mu  &\leq \|c\|_{L^d(B, \mu)} \| \chi \alpha G_{\alpha}^B h \|_{L^{\frac{2d}{d-2}}(B, \mu)} \|h\|_{L^2(B, \mu)} \\
&\leq \underbrace{\|c\|_{L^d(B, \mu)}\|h\|_{L^2(B, \mu)} K_{d, \rho} }_{=:c_4}\, \mathcal{E}^B(\chi \alpha G_{\alpha}^B h,  \chi \alpha G_{\alpha}^B h)^{1/2},
\end{align*}
and
\begin{align*}
- \int_{B} f (\chi \alpha G_{\alpha}^B h) d\mu &\leq \|f\|_{L^{\frac{2d}{d+2}}(B, \mu)} \| \chi \alpha G_{\alpha}^B h \|_{L^{\frac{2d}{d-2}}(B, \mu)} \\
&\leq \underbrace{\|f\|_{L^{\frac{2d}{d+2}}(B, \mu)}  K_{d, \rho}} _{=:c_5}  \,\mathcal{E}^B(\chi \alpha G_{\alpha}^B h,  \chi \alpha G_{\alpha}^B h)^{1/2}.
\end{align*}
By the H\"{o}lder inequality,
\begin{align*}
- \int_{B} \langle \mathbf{F}, \nabla (\chi \alpha G_{\alpha}^B h) \rangle d\mu &\leq \| \mathbf{F} \|_{L^2(B, \mu)}  \|\nabla \chi \alpha G_{\alpha}^B h \|_{L^2(B, \mu)} \\
&\leq  \underbrace{ \| \mathbf{F} \|_{L^2(B, \mu)} \lambda^{-1/2}}_{=:c_6}\, \mathcal{E}^B(\chi \alpha G_{\alpha}^B h,  \chi \alpha G_{\alpha}^B h)^{1/2}.
\end{align*}
Meanwhile, by Proposition \ref{algproper} and using the H\"{o}lder inequality, the Sobolev inequality and  the $L^2(B, \mu)$-contraction property of $(G^B_{\alpha})_{\alpha>0}$
\begin{align*}
 &\int_{B} \alpha G_{\alpha}^B h \cdot (L^B \chi^2) h d\mu  =  \int_{B} \alpha G_{\alpha}^B h \cdot (2\chi L^B \chi) h d\mu   + \int_{B} \alpha G_{\alpha}^B h \cdot 2 \langle A \nabla \chi, \nabla \chi \rangle h d\mu \\
 & \leq \| \chi \alpha G_{\alpha}^B h \|_{L^{\frac{2d}{d-2}}(B, \mu)} \| 2 L^B \chi \|_{L^d(B, \mu)} \|h\|_{L^2(B, \mu)} + 2dM\|\nabla \chi \|^2_{L^{\infty}(B)} \| \alpha G_{\alpha}^B h\|_{L^2(B, \mu)} \| h\|_{L^2(B, \mu)} \\
 & \leq  \underbrace{K_{d, \rho} \| 2 L^B \chi \|_{L^d(B, \mu)} \|h\|_{L^2(B, \mu)}}_{=:c_7}  \, \mathcal{E}^B(\chi \alpha G_{\alpha}^B h,  \chi \alpha G_{\alpha}^B h)^{1/2} + \underbrace{ 2dM\|\nabla \chi \|^2_{L^{\infty}(B)}  \|h\|^2_{L^2(B, \mu)}}_{=:c_8} .
\end{align*}
From the Cauchy-Schwartz inequality,  the H\"{o}lder inequality and the Sobolev inequality, and  the $L^2(B, \mu)$-contraction property of $(G^B_{\alpha})_{\alpha>0}$,
\begin{align*}
&\int_{B} \langle A \nabla \chi^2, \nabla \alpha G_{\alpha}^B h \rangle h d\mu = \int_{B} 2\langle A \nabla \chi, \chi \nabla  \alpha G_{\alpha}^B h \rangle h d\mu \\
&\quad = \int_{B} 2\langle A \nabla \chi, \nabla  (\chi \alpha G_{\alpha}^B h) \rangle h d\mu   - \int_{B} 2\langle A \nabla \chi, \nabla \chi \rangle (\alpha G_{\alpha}^B h) h d\mu \\
& \quad  \leq \int_{B} 2\langle A \nabla (\chi \alpha G_{\alpha}^B h), \nabla  (\chi \alpha G_{\alpha}^B h) \rangle^{1/2} \langle A \nabla \chi, \nabla \chi \rangle^{1/2} |h| d\mu  \\
&\qquad \qquad \quad + 2dM \| \nabla \chi \|^2_{L^{\infty}(B)} \| \alpha G_{\alpha}^B h \|_{L^2(B, \mu)} \|h\|_{L^2(B, \mu)} \\
&\quad \leq \underbrace{2(dM)^{1/2} \| \nabla \chi\|_{L^{\infty}(B)} \| h\|_{L^2(B, \mu)}}_{=:c_9} \cdot \mathcal{E}^B(\chi \alpha G_{\alpha}^B h,  \chi \alpha G_{\alpha}^B h)^{1/2} + \underbrace{2dM \| \nabla \chi \|^2_{L^{\infty}(B)} \|h\|^2_{L^2(B, \mu)}}_{=:c_{10}}.
\end{align*}
Similarly, we find that
\begin{align*}
&\int_{B} \langle A \nabla \alpha G_{\alpha}^B h ,\nabla \chi^2 \rangle hd\mu =\int_{B} \langle A^T \nabla \chi^2, \nabla \alpha G_{\alpha}^B h \rangle h d\mu \\
&\quad \leq c_9 \,\mathcal{E}^B(\chi \alpha G_{\alpha}^B h,  \chi \alpha G_{\alpha}^B h)^{1/2} + c_{10}.
\end{align*}
Therefore, \eqref{alaoglucptnes} deduces that
$$
\mathcal{E}^B \big(\chi \alpha G_{\alpha}^B h, \chi \alpha G_{\alpha}^B h \big) \leq C_1 \mathcal{E}^B \big(\chi \alpha G_{\alpha}^B h, \chi \alpha G_{\alpha}^B h \big) ^{1/2}+C_2,
$$
where $C_1= c_1+2c_2+c_4+c_5+c_6+c_7+2c_9$ and $C_2=c_3+c_8+2c_{10}$. Thus, Young's inequality implies that
$$
\mathcal{E}^B \big(\chi \alpha G_{\alpha}^B h, \chi \alpha G_{\alpha}^B h \big)  \leq  C_1^2+2C_2.
$$
Given that $C_1^2+2C_2>0$ is a constant independent of $\alpha>0$, it follows that 
$$
\sup_{\alpha>0} \mathcal{E}^B \big(\chi \alpha G_{\alpha}^B h, \chi \alpha G_{\alpha}^B h \big)  \leq  C_1^2+2C_2.
$$
Using the Banach-Alaoglu theorem, there exist $v \in D(\mathcal{E}^B)$ and
a strictly increasing sequence $(\alpha_k)_{k \geq 1} \subset \mathbb{N}$ such that 
$$
\lim_{k \rightarrow \infty} \chi \alpha_k G_{\alpha_k}^B h  = v, \quad \text{ weakly in $(D(\mathcal{E}^B), \| \cdot \|_{D(\mathcal{E}^B_1)})$},
$$
where $\| \cdot \|_{D(\mathcal{E}^B_1)}$ is defined as $\| w \|_{D(\mathcal{E}^B_1)}:= \mathcal{E}^B(w,w)^{1/2}+\|w\|_{L^2(B, \mu)}$,  $w \in D(\mathcal{E}^B)$. On the other hand, by the strong continuity of $(G_{\alpha}^B)_{\alpha>0}$ on $L^2(B, \mu)$ we have $\lim_{k \rightarrow \infty} \chi \alpha_k G_{\alpha_k}^B h = \chi h$ in $L^2(B, \mu)$. Thus, $\chi h = v \in D(\mathcal{E}^B)=H^{1,2}_0(B)$. Therefore, by Lemma \ref{auxresulem}(i) $\chi \tilde{h} = \rho (\chi h) \in H^{1,2}_0(B)$, as desired.
\end{proof}

\begin{theo} \label{maintheoremai}
Assume that {\bf (Hyp)} holds. Let $x_0 \in \mathbb{R}^d$, $R>0$ and $q \in [2, \infty)$. Suppose that ${\rm div} A \in L_{loc}^d(B_R(x_0), \mathbb{R}^d)$, $\tilde{h} \in L^2_{loc}(B_R(x_0))$, $c \in L^{d}_{loc}(B_R(x_0))$, $\tilde{f} \in L^{\frac{qd}{d+q}}_{loc}(B_R(x_0))$ and $\tilde{\mathbf{F}} \in L^q_{loc}(B_R(x_0), \mathbb{R}^d)$ satisfy
$$
\int_{B_R(x_0)} \Big( \text{\rm div} \big( A \nabla u \big) + \langle \mathbf{H}, \nabla u \rangle +cu \Big) \tilde{h} dx = \int_{B_R(x_0)} \tilde{f} udx +\int_{B_R(x_0)} \langle \tilde{\mathbf{F}}, \nabla u \rangle dx \quad \text{ for all $u \in C_0^{\infty}(B_R(x_0))$}.
$$
Then, $\frac{\tilde{h}}{\rho} \in H_{loc}^{1,p \wedge q}(B_R(x_0))$. In particular, $\tilde{h} \in H^{1,{p \wedge q}}_{loc}(B_R(x_0))$. 
\end{theo}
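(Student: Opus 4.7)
The plan is to first reduce to the $H^{1,2}_{loc}$ statement already handled by Theorem~\ref{mainresreg}, then recast the equation as a linear divergence-form equation for $\tilde h$ with VMO leading coefficients, and bootstrap the Sobolev integrability by iterating an $L^r$-regularity theorem for divergence-form equations with VMO leading part (in the spirit of \cite[Theorem~1.8.3]{BKRS15}, already invoked to upgrade $\rho$ in Theorem~\ref{exisinfinvar}).

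\emph{Step 1 (reduction to $q=2$).} Because $q\ge 2$, we have the embeddings $L^{qd/(q+d)}_{loc}\subset L^{2d/(d+2)}_{loc}$ and $L^q_{loc}\subset L^2_{loc}$, so $\tilde f$ and $\tilde{\mathbf F}$ satisfy the integrability hypotheses of Theorem~\ref{mainresreg}, yielding $\tilde h \in H^{1,2}_{loc}(B_R(x_0))$.

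\emph{Step 2 (divergence-form identity).} Now that $\tilde h \in H^{1,2}_{loc}$, for each $u\in C_0^{\infty}(B_R(x_0))$ one may integrate by parts to obtain $\int \text{div}(A\nabla u)\,\tilde h\,dx = -\int \langle A^T\nabla \tilde h,\nabla u\rangle\,dx$ (note $\text{div}(A\nabla u)\in L^d_{loc}$ by Proposition~\ref{convdivnon} and is supported in $\text{supp}(u)$, while $\tilde h\in H^{1,2}$ on a neighborhood of $\text{supp}(u)$, so the standard justification by mollification of $\tilde h$ applies). Substituting into the hypothesis and rearranging gives
\[
\int_{B_R(x_0)} \langle A^T\nabla\tilde h - \tilde h\,\mathbf H + \tilde{\mathbf F},\,\nabla u\rangle\,dx \;=\; \int_{B_R(x_0)}(c\tilde h - \tilde f)\,u\,dx,
\]
exhibiting $\tilde h$ as a weak solution of a linear divergence-form equation with VMO, locally uniformly elliptic leading matrix $A^T$, vector datum $\tilde h\mathbf H - \tilde{\mathbf F}$, and scalar datum $c\tilde h - \tilde f$.

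\emph{Step 3 (bootstrap and conclusion).} Apply the $L^r$-regularity theorem for divergence-form equations with VMO leading coefficients iteratively to the identity in Step~2. If $\tilde h\in H^{1,s_k}_{loc}$ with $s_k<d$, Sobolev embedding gives $\tilde h\in L^{s_k d/(d-s_k)}_{loc}$, H\"older gives $\tilde h\mathbf H\in L^{s_{k+1}}_{loc}$ with $1/s_{k+1}=1/s_k-1/d+1/p$, and $c\tilde h\in L^{s_k}_{loc}$ lies in the scalar-datum class corresponding to exponent $s_{k+1}$; since $p>d$ the recursion $1/s_{k+1} = 1/s_k + (1/p - 1/d)$ is strictly decreasing in $1/s_k$, so starting from $s_0=2$ we reach either $s_k\ge p\wedge q$ directly or $s_k>d$ in finitely many steps, at which point Sobolev embedding forces $\tilde h\in L^{\infty}_{loc}$, hence $\tilde h\mathbf H\in L^p_{loc}$ and $c\tilde h\in L^d_{loc}$, and a final application of the regularity theorem delivers $\tilde h\in H^{1,p\wedge q}_{loc}(B_R(x_0))$. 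Since $\rho\in H^{1,p}_{loc}\cap C$ is strictly positive and locally bounded away from $0$ and $\infty$, the Sobolev product rule (as invoked in the last line of the proof of Theorem~\ref{mainresreg}) then yields $\tilde h/\rho\in H^{1,p\wedge q}_{loc}(B_R(x_0))$, and the ``in particular'' clause follows at once.

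\emph{Main obstacle.} The delicate point is Step~3: choosing an $L^r$-regularity statement whose hypotheses (VMO, uniformly elliptic leading coefficient; $L^r$ vector datum; $L^{rd/(r+d)}$ scalar datum; an a priori $H^{1,2}$-solution uniquely identified via a weak maximum principle as in the proof of Theorem~\ref{resolregul}) correctly absorb the unknown-dependent terms $\tilde h\mathbf H$ and $c\tilde h$ which drive the bootstrap, and then tracking the exponents carefully to ensure the iteration closes at the target value $p\wedge q$ rather than saturating prematurely at the Sobolev-critical threshold $s_k=d$.
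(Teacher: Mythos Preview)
Your proof is correct but organized differently from the paper's. The paper works with $h:=\tilde h/\rho$ rather than $\tilde h$: after invoking Theorem~\ref{mainresreg}, it uses the Dirichlet-form identity $\mathcal{E}^B(u,\chi h)=-\int L^Bu\cdot\chi h\,d\mu$ (Proposition~\ref{generprop}) to derive a divergence-form equation for $h$ with leading matrix $\rho A^T$, drift coefficient $\rho\mathbf B\in L^{p}$, and zero-order coefficient $-\rho c$ (equation~\eqref{variaidenhp}); a \emph{single} application of \cite[Theorem~1.8.3]{BKRS15} then yields $h\in H^{1,p\wedge q}_{loc}$, and Lemma~\ref{auxresulem}(ii) transfers this to $\tilde h=\rho h$. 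Your Step~2 identity is exactly the one the paper records in the Remark following the proof, which explicitly acknowledges the direct route via $\tilde h$ as an easier alternative; but there too \cite[Theorem~1.8.3]{BKRS15} is applied once (it accommodates lower-order coefficients such as $\tilde h\mathbf H$ inside the divergence, not merely data), so your bootstrap, while valid, is unnecessary work. What the paper's detour through $h=\tilde h/\rho$ buys is structural: in \eqref{variaidenhp} the drift $\rho\mathbf B$ is a genuine coefficient rather than an unknown-dependent term $\tilde h\mathbf H$, which (as the Remark observes) opens the door to $H^{2,p\wedge q}_{loc}$-regularity for $h$ under mild additional hypotheses---something the equation for $\tilde h$ does not give directly. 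Your final step, recovering $\tilde h/\rho\in H^{1,p\wedge q}_{loc}$ from $\tilde h$ via the product rule with $1/\rho\in H^{1,p}_{loc}$, is the mirror image of the paper's use of Lemma~\ref{auxresulem}(ii).
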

\begin{proof}
By Theorem \ref{mainresreg}, $\tilde{h} \in H^{1,2}_{loc}(B_R(x_0))$.
Consider an arbitrary $s_0 \in (0, R)$ and select $s \in (s_0, R)$ and $r \in (s, R)$. Write $B:=B_r(x_0)$. Let $\chi \in C_0^{\infty}(B)$ with $\chi \equiv 1$ on $\overline{B}_{s}(x_0)$ and $0 \leq \chi \leq 1$ on $\mathbb{R}^d$. To establish the assertion, it suffices to demonstrate that $\tilde{h} \in H^{1, p \wedge q}(B_{s_0}(x_0))$. 
Let $\rho \in H^{1,p}_{loc}(\mathbb{R}^d) \cap C(\mathbb{R}^d)$ with $\rho(x)>0$ for all $x \in \mathbb{R}^d$ be  as in Theorem \ref{exisinfinvar} and define $\mu$, $\mathbf{B}$ as in \eqref{rhodivfreeb}. Let $(\mathcal{E}^B, D(\mathcal{E}^B))$ and $(L^B, D(L^B))$ 
be as in Theorem \ref{matebisdiri} and Proposition \ref{generprop}, respectively. Define $h := \frac{\tilde{h}}{\rho} \in H^{1,2}(B)$, $f := \frac{\tilde{f}}{\rho}\in L^{\frac{qd}{d+q}}(B)$, and $\mathbf{F} := \frac{1}{\rho} \tilde{\mathbf{F}} \in L^q(B, \mathbb{R}^d)$. Now let $u \in C_0^{\infty}(B_s(x_0))$ be arbitrarily chosen. Then,
\begin{align*}
&-\int_{B} \Big( \text{\rm div} \big( A \nabla u \big) + \langle \mathbf{H}, \nabla u \rangle +cu \Big) (\chi h) d\mu =
-\int_{B} \Big( \text{\rm div} \big( A \nabla u \big) + \langle \mathbf{H}, \nabla u \rangle +cu \Big) h dx \\
&\quad =-\int_{B} \tilde{f} u dx -\int_{B} \langle \tilde{\mathbf{F}}, \nabla u \rangle dx =-\int_{B} f u\, d\mu -\int_{B} \langle \mathbf{F}, \nabla u \rangle \,d\mu.
\end{align*}
Given that $\chi u \in C_0^{\infty}(B_s(x_0)) \subset H^{1,2}_0(B) = D(\mathcal{E}^B)$, it follows from Proposition \ref{generprop} and Theorem \ref{matebisdiri}(iii),
\begin{align*}
& -\int_{B} f u\, d\mu -\int_{B} \langle \mathbf{F}, \nabla u \rangle \,d\mu \\
&\quad =-\int_{B} \Big( \text{\rm div} \big( A \nabla u \big) + \langle \mathbf{H}, \nabla u \rangle +cu \Big) (\chi h) d\mu  =- \int_{B} L^B u \cdot (\chi h) \, d\mu - \int_{B} cu (\chi h) d\mu \\
& \quad  = \mathcal{E}^B (u, \chi h) - \int_{B} cu (\chi h) d\mu  = \int_{B} \langle A \nabla u, \nabla (\chi h) \rangle d\mu- \int_{B} \langle \mathbf{B}, \nabla u \rangle \chi h d\mu -\int_{B} cu(\chi h) d\mu.
\end{align*}
Consequently, for any $\varphi \in C_0^{\infty}(B_s(x_0))$
\begin{align}
& \int_{B_s(x_0)} \langle \rho A^T \nabla h, \nabla \varphi  \rangle dx+ \int_{B_{s}(x_0)} \langle \rho \mathbf{B}, \nabla h \rangle \varphi dx +\int_{B_s(x_0)} -\rho c h \varphi dx \nonumber \\
& \quad = \int_{B_s(x_0)} -\rho f \varphi dx + \int_{B_s(x_0)} \langle -\rho \mathbf{F}, \nabla \varphi \rangle dx. \label{variaidenhp}
\end{align}
Let $\hat{p}:=p \wedge q$. Then, $-\rho \mathbf{B} \in L^{\hat{p}}(B, \mathbb{R}^d)$ and $-\rho c \in L^{\frac{d\hat{p}}{d+\hat{p}}}(B_s(x_0))$, $-\rho f \in L^{\frac{d \hat{p}}{d+\hat{p}}}(B_s(x_0))$ and $-\rho \mathbf{F} \in L^{\hat{p}}(B_{s}(x_0), \mathbb{R}^d)$. By applying \cite[Theorem 1.8.3]{BKRS15} to \eqref{variaidenhp}, we conclude that $h \in H^{1, \hat{p}}(B_{s_0}(x_0))$.  Finally, we establish from Lemma \ref{auxresulem}(ii) that $\tilde{h}=\rho h \in H^{1, p \wedge \hat{p}}(B_{s_0}(x_0))= H^{1, \hat{p}}(B_{s_0}(x_0))$, completing the proof.
\end{proof}

\begin{rem}
In Theorem \ref{maintheoremai}, one can directly obtain from Theorem \ref{mainresreg} and integration by parts that $\tilde{h} \in H^{1,2}_{loc}(B_R(x_0))$ satisfies
\begin{equation} \label{newtypeform}
\int_{B_R(x_0)} \langle A^T \nabla \tilde{h}, \nabla \varphi \rangle - \langle \tilde{h} \mathbf{H}, \nabla \varphi \rangle + c \tilde{h} \varphi dx = \int_{B_R(x_0)} \tilde{f} \varphi\,dx +\int_{B_R(x_0)} \langle \tilde{\mathbf{F}}, \nabla \varphi \rangle dx \quad \text{ for all $u \in C_0^{\infty}(B_R(x_0))$}.
\end{equation}
Applying \cite[Theorem 1.8.3]{BKRS15} to \eqref{newtypeform}, we directly discover that $\tilde{h} \in H^{1,p\wedge q}_{loc}(B_R(x_0))$. Of course, this approach is easier than the proof of Theorem \ref{maintheoremai}, but we mention that the divergence type equation in \eqref{variaidenhp} presents meaningful input for the local regularity of solutions. For instance, in \eqref{variaidenhp} $\frac{\tilde{h}}{\rho}=h$ may have $H^{2, p \wedge q}_{loc}(B_R(x_0))$-regularity under additional conditions on the coefficients (cf. \cite[Theorem 2.2]{KK19}). 
On the other hand, \eqref{newtypeform} may not give this regularity result directly.
\end{rem}

\centerline{}
\noindent
{\bf Proof of Theorem \ref{intromainth}}\\
Consider an arbitrary point $x_0 \in U$. Then, there exists $R_0>0$ such that $\overline{B}_{R_0}(x_0) \subset U$. Choose $R \in (0, R_0)$.
To establish the assertion, it is enough to show that $h \in H^{1, p \wedge q}_{loc}(B_R(x_0))$. Let $\chi \in C_0^{\infty}(B_{R_0}(x_0))$ be such that $\chi \equiv 1$ on $B_R(x_0)$ and $0 \leq \chi \leq 1$ on $\mathbb{R}^d$. Since $a_{ij} \in VMO(B_{R_0}(x_0))$ for all $1 \leq i,j \leq d$, by Lemma \ref{bdextenlem} there exists $\hat{a}_{ij} \in VMO \cap L^{\infty}(\mathbb{R}^d)$ for each $1 \leq i,j \leq d$ such that $\hat{a}_{ij}=a_{ij}$ on $B_{R_{0}}(x_0)$. Let $\hat{A}:=(\hat{a}_{ij})_{1 \leq i,j \leq d}$. We define a matrix of functions $\overline{A} = (\overline{a}_{ij})_{1 \leq i,j \leq d}$ as follows:
$$
\overline{A} = \chi( \hat{A}- \lambda_{B_{R_0}(x_0)} id) + \lambda_{B_{R_0}(x_0)} id,
$$
where $id$ denotes the $d \times d$ identity matrix. Then, first by Proposition \ref{algprovmo}, $\overline{a}_{ij} \in VMO \cap L^{\infty}(\mathbb{R}^d)$ for all $1 \leq i,j \leq d$.  Moreover, $\overline{a}_{ij}(x)=\hat{a}_{ij}(x)=a_{ij}(x)$ for all $x \in B_R(x_0)$ and $1 \leq i,j \leq d$ and
$$
\lambda_{B_{R_0}(x_0)} \| \xi \|^2 \leq \langle \overline{A}(x) \xi, \xi \rangle, \;\; \text{ and } \;\; \max_{1 \leq i,j \leq d} |\overline{a}_{ij} (x)| \leq M_{B_{R_0}(x_0)} + 2 \lambda_{B_{R_0}(x_0)} \quad \text{ for a.e. $x \in \mathbb{R}^d$ and for all $\xi \in \mathbb{R}^d$}. 
$$
Since ${\rm div }\overline{A} ={\rm div} A \in L^d(B_R(x_0), \mathbb{R}^d)$ on $B_R(x_0)$, it follows for any $\varphi \in C_0^{\infty}(B_R(x_0))$ that
\begin{align*}
&{\rm div} (\overline{A} \nabla \varphi) + \langle \mathbf{H}, \nabla \varphi \rangle  = \text{trace}(\overline{A} \nabla^2 \varphi) + \langle {\rm div} \overline{A} +\mathbf{H}, \nabla \varphi \rangle \\
& = \text{trace}(A \nabla^2 \varphi) + \langle {\rm div} A +\mathbf{H}, \nabla \varphi \rangle  = \mathcal{L} \varphi, \quad \text{ on $B_R(x_0)$}.
\end{align*}
Consequently, \eqref{basicintideneq} leads to
\begin{equation} \label{desiredeq}
\int_{B_R(x_0)} \Big({\rm div} (\overline{A} \nabla \varphi) + \langle \mathbf{H}, \nabla \varphi \rangle +c\varphi \Big) h dx =  \int_{B_R(x_0)} f \varphi dx +\int_{B_R(x_0)} \langle \mathbf{F}, \nabla \varphi \rangle dx \quad \text{ for all $u \in C_0^{\infty}(B_R(x_0))$}.
\end{equation}
Observe that the condition {\bf (Hyp)} from Section \ref{2ndsection} remains valid when $A$ is replaced by $\overline{A}$. Thus, by applying Theorem \ref{maintheoremai} to \eqref{desiredeq}, we conclude that the assertion holds.  \\
\text{}\hfill $\square$
\centerline{}
{\bf Proof of Corollary \ref{directcorol}} \\
Let $\mathbf{H}:= \mathbf{G} -{\rm div} A$. Since $\mathbf{G} \in L^p_{loc}(U, \mathbb{R}^d)$ with $p \in (d, \infty)$ and ${\rm div} A\in L^p_{loc}(U, \mathbb{R}^d)$, it follows that $\mathbf{H}\in L^p_{loc}(U, \mathbb{R}^d)$.
Moreover, for any $\varphi \in C_0^{\infty}(U)$, we have 
$$
\mathcal{L} \varphi={\rm trace}(A \nabla^2 \varphi) + \langle \mathbf{G}, \nabla \varphi \rangle \quad \text{ for any $\varphi \in C_0^{\infty}(U)$.}
$$
Therefore, the result follows directly from Theorem \ref{intromainth}. \\
\text{}\hfill $\square$
\centerline{}
{\bf Proof of Corollary \ref{finmaincor}} \\
First by \cite[Theorem 2.1]{BS17}, there exists $h \in L^2_{loc}(U)$ such that $\nu=hdx$. Applying Corollary \ref{directcorol} with $q=p$, $\mathbf{F}=0$ and $f=0$, we find that $h \in H^{1,p}_{loc}(U) \cap C^{0, 1-d/p}_{loc}(U)$, as desired.
\text{}\hfill $\square$
\centerline{}
\begin{exam}
\begin{itemize}
\item[(i)]  
Define a function \(\phi: \mathbb{R}^d \to \mathbb{R}\) by  
\[
\phi(x) := 2 + \|x\|^2 + \cos\left(\ln \ln\left(1 + \frac{1}{\|x\|}\right)\right), \quad x \in \mathbb{R}^d \setminus \{0\}, \quad \phi(0) := 0.
\]  
It has been shown in \cite{L23ID} that \(\phi \in H^{1,d}_{\text{loc}}(\mathbb{R}^d) \cap L^\infty_{\text{loc}}(\mathbb{R}^d)\) with \(\phi \geq 1\) on \(\mathbb{R}^d\), but \(\phi \notin C(\mathbb{R}^d)\). Consequently, \(\nabla \phi \notin L^r_{\text{loc}}(\mathbb{R}^d, \mathbb{R}^d)\) for any \(r \in (1, \infty)\).  

Now, define a matrix of functions \(A = (a_{ij})_{1 \leq i,j \leq d}\) by setting $A(x) = \phi(x) \, \text{id}$, $x \in \mathbb{R}^d$.
Let \(p \in (d, \infty)\), and let \(\mathbf{H} \in L^p_{\text{loc}}(\mathbb{R}^d, \mathbb{R}^d)\) be an arbitrary vector field. Consider the diffusion operator \((\mathcal{L}, C_0^\infty(\mathbb{R}^d))\), defined as follows:  
\[
\mathcal{L}f := \text{\rm div}(A \nabla f) + \langle \mathbf{H}, \nabla f \rangle = \text{\rm trace}(A \nabla^2 f) + \langle \text{\rm div} A + \mathbf{H}, \nabla f \rangle, \quad f \in C_0^\infty(\mathbb{R}^d).
\]  
By Proposition~\ref{limitmatri}, this simplifies to  
\[
\mathcal{L}f = \phi \Delta f + \langle \nabla \phi + \mathbf{H}, \nabla f \rangle, \quad f \in C_0^\infty(\mathbb{R}^d).
\]  
The diffusion coefficient of \(\mathcal{L}\) is \(\phi \, \text{id}\), whose components are not continuous. The drift coefficient of \(\mathcal{L}\) is \(\nabla \phi + \mathbf{H} \in L^d_{\text{loc}}(\mathbb{R}^d, \mathbb{R}^d)\), but \(\nabla \phi + \mathbf{H} \notin L^r_{\text{loc}}(\mathbb{R}^d, \mathbb{R}^d)\) for any \(r \in (d, \infty)\).  Our main result, Theorem~\ref{intromainth}, provides new insights into the regularity of the density of the infinitesimally invariant measure for \((\mathcal{L}, C_0^\infty(\mathbb{R}^d))\). For instance, if a measure \(\nu = h \, dx\), with \(h \in L^2_{\text{loc}}(\mathbb{R}^d)\), is an infinitesimally invariant measure for \((\mathcal{L}, C_0^\infty(\mathbb{R}^d))\), i.e.
\[
\int_{\mathbb{R}^d} \mathcal{L}f \, d\nu = 0, \quad \text{for all } f \in C_0^\infty(\mathbb{R}^d),
\]  
then Theorem~\ref{intromainth} implies that \(h \in H^{1,p}_{\text{loc}}(\mathbb{R}^d) \cap C^{0,1-d/p}_{loc}(\mathbb{R}^d)\). This result may not be derived using the results in \cite{BKRS15, BS17}.  

\item[(ii)]  
Let \(\eta: \mathbb{R} \to \mathbb{R}\) be a continuous function that is not absolutely continuous (e.g., \(\eta(x) := x \sin\left(\frac{1}{x}\right)\) for \(x > 0\) and \(\eta(x) := 0\) for \(x \leq 0\)). Define \(\phi(x) := \exp(\eta(x))\) for \(x \in \mathbb{R}^d\). Then, \(\phi\) is a continuous function with \(\phi(x) > 0\) for all \(x \in \mathbb{R}^d\), but \(\phi\) is not absolutely continuous, so that \(\phi \notin H^{1,1}_{\text{loc}}(\mathbb{R})\).  

Define a matrix \(A = (a_{ij})_{1 \leq i,j \leq d}\) by  
\[
a_{ij} = 
\begin{cases} 
0 & \text{if } i \neq j, \\ 
\phi(x_{i+1}) & \text{if } i = j \text{ and } i < d, \\ 
\phi(x_1) & \text{if } i = j = d.
\end{cases}
\]  
Then \(a_{ii} \notin H^{1,1}_{\text{loc}}(\mathbb{R}^d)\) for all \(1 \leq i \leq d\), but it holds that ${\rm div} A =0$. Let \(\tilde{\mathbf{H}} \in L^p_{\text{loc}}(\mathbb{R}^d, \mathbb{R}^d)\) with $p \in (d, \infty)$ be given. Consinder the partial differential operator \((L, C_0^\infty(\mathbb{R}^d))\) defined by  
\[
Lf = \frac{1}{2} \text{\rm trace}(A \nabla^2 f) + \langle \tilde{\mathbf{H}}, \nabla f \rangle, \quad f \in C_0^\infty(\mathbb{R}^d).
\]  
If the measure \(\nu = h \, dx\), with \(h \in L^2_{\text{loc}}(\mathbb{R}^d)\), is an infinitesimally invariant measure for \((L, C_0^\infty(\mathbb{R}^d))\), i.e.
$$
\int_{\mathbb{R}^d} Lf \, d\nu = 0, \quad \text{for all } f \in C_0^\infty(\mathbb{R}^d),
$$ 

then Corollary~\ref{directcorol} implies that \(h \in H^{1,p}_{\text{loc}}(\mathbb{R}^d) \cap C^{0,1-d/p}_{loc}(\mathbb{R}^d)\). This result also may not be derived using the results in \cite{BKRS15, BS17}. It is remarkable that the density $h$ of $\nu$ is locally H\"{o}lder continuous even though the components $a_{ii}$ of $A$ are not locally H\"{o}lder continous.
\end{itemize}
\end{exam}

\section{Auxiliary results} \label{sec5}
\begin{prop} \label{algprovmo}
Let $d \geq 1$. If $f, g \in VMO \cap L^{\infty}(\mathbb{R}^d)$, then $fg \in VMO \cap L^{\infty}(\mathbb{R}^d)$.
\end{prop}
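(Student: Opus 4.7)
The approach is entirely routine: I would unwind the $VMO$-modulus definition for the product $fg$ by adding and subtracting a mixed term. Specifically, for a.e. $x,y \in \mathbb{R}^d$,
$$
|f(x)g(x) - f(y)g(y)| \leq |f(x)|\,|g(x)-g(y)| + |g(y)|\,|f(x)-f(y)| \leq \|f\|_{L^\infty(\mathbb{R}^d)} |g(x)-g(y)| + \|g\|_{L^\infty(\mathbb{R}^d)} |f(x)-f(y)|.
$$

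Integrating this pointwise bound over $B_r(z) \times B_r(z)$ and multiplying by $r^{-2d}$, I would obtain
$$
r^{-2d} \int_{B_r(z)}\!\int_{B_r(z)} |f(x)g(x)-f(y)g(y)|\,dx\,dy \leq \|f\|_{L^\infty(\mathbb{R}^d)} \omega_g(R) + \|g\|_{L^\infty(\mathbb{R}^d)} \omega_f(R),
$$
where $\omega_f$ and $\omega_g$ are the $VMO$-moduli furnished by Definition \ref{defnvmo} for $f$ and $g$, respectively. Taking the supremum over $z \in \mathbb{R}^d$ and $r < R$ on the left-hand side, I would conclude that the function
$$
\omega_{fg}(R) := \|f\|_{L^\infty(\mathbb{R}^d)} \omega_g(R) + \|g\|_{L^\infty(\mathbb{R}^d)} \omega_f(R), \qquad R \in [0, \infty),
$$
is a positive continuous function on $[0,\infty)$ with $\omega_{fg}(0)=0$ that serves as a $VMO$-modulus for $fg$. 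Since $fg \in L^1_{loc}(\mathbb{R}^d)$ is automatic (as a product of two bounded measurable functions), this shows $fg \in VMO$. The bound $\|fg\|_{L^\infty(\mathbb{R}^d)} \leq \|f\|_{L^\infty(\mathbb{R}^d)} \|g\|_{L^\infty(\mathbb{R}^d)}$ then gives $fg \in VMO \cap L^\infty(\mathbb{R}^d)$.

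There is no real obstacle here; the only subtlety is that the $VMO$ condition in Definition \ref{defnvmo} is formulated with a fixed modulus $\omega$, but since the class $VMO$ is defined as the union over all admissible moduli $\omega$, the construction of $\omega_{fg}$ from $\omega_f$ and $\omega_g$ is the entire content of the argument.
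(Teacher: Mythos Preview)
Your proof is correct and follows essentially the same approach as the paper: both use the pointwise inequality $|f(x)g(x)-f(y)g(y)| \leq \|f\|_{L^\infty}|g(x)-g(y)| + \|g\|_{L^\infty}|f(x)-f(y)|$ obtained by adding and subtracting $f(x)g(y)$, then integrate over $B_r(z)\times B_r(z)$ to combine the two $VMO$-moduli. Your write-up is slightly more explicit about why $\omega_{fg}$ is an admissible modulus and why $fg \in L^1_{loc}\cap L^\infty$, but the argument is the same.
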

\begin{proof}
Let $f,g \in VMO \cap L^{\infty}(\mathbb{R}^d)$. Then, there exist positive continuous functions $\omega_1$ and $\omega_2$ on $[0, \infty)$ with $\omega_1(0)=\omega_2(0)=0$ such that
$$
\sup_{z \in \mathbb{R}^d, r < R} r^{-2d} \int_{B_r(z)} \int_{B_r(z)} |f(x) - f(y)| \, dx \, dy \leq \omega_1(R), \quad \text{for every $R > 0$}
$$
and
$$
\sup_{z \in \mathbb{R}^d, r < R} r^{-2d} \int_{B_r(z)} \int_{B_r(z)} |g(x) - g(y)| \, dx \, dy \leq \omega_2(R), \quad \text{for every $R > 0$}.
$$
Let $z \in \mathbb{R}^d$ and $R>0$. Choose $r \in (0, R)$. Then for each $x,y \in B_r(z)$,
\begin{align*}
|f(x)g(x)-f(y)g(y)| &\leq |f(x)g(x)-f(x)g(y)|+|f(x)g(y)-f(y)g(y)| \\
&\leq \|f\|_{L^{\infty}(\mathbb{R}^d)} |g(x)-g(y)| + \|g\|_{L^{\infty}(\mathbb{R}^d)} |f(x)-f(y)|.
\end{align*}
Therefore, it follows that
\begin{align*}
\sup_{z \in \mathbb{R}^d, r < R}r^{-2d}\int_{B_r(z)} \int_{B_r(z)} |f(x)g(x) - f(y)g(y)| \, dx \, dy  \leq \|f\|_{L^{\infty}(\mathbb{R}^d)}  \omega_2(R)  + \|g\|_{L^{\infty}(\mathbb{R}^d)}  \omega_1(R),
\end{align*}
as desired.
\end{proof}

\begin{lem} \label{bdextenlem}
Let $B$ be an open ball in $\mathbb{R}^d$ with $d \geq 1$ and $f \in VMO(B) \cap L^{\infty}(B)$. Then, there exists $\hat{f} \in VMO \cap L^{\infty}(\mathbb{R}^d)$ such that $\hat{f}|_B=f$.
\end{lem}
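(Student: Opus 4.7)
The plan is to exhibit $\hat{f}$ by truncating the ambient extension whose existence is guaranteed by the definition of $VMO(B)$. By Definition~\ref{defnvmo}, there exists $\tilde{f}\in VMO$ such that $\tilde{f}|_{B}=f$ almost everywhere on $B$. This $\tilde{f}$ is a priori only in $L^{1}_{loc}(\mathbb{R}^d)$, and may be unbounded outside $B$, so the task reduces to replacing $\tilde{f}$ by a bounded function that still lies in $VMO$ and still agrees with $f$ on $B$.

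Set $M:=\|f\|_{L^{\infty}(B)}$ and define
\[
\hat{f}(x):=\max\bigl(-M,\min(M,\tilde{f}(x))\bigr),\qquad x\in\mathbb{R}^d.
\]
Then $\hat{f}\in L^{\infty}(\mathbb{R}^d)$ with $\|\hat{f}\|_{L^{\infty}(\mathbb{R}^d)}\leq M$, and in particular $\hat{f}\in L^{1}_{loc}(\mathbb{R}^d)$. Moreover, since $|\tilde{f}|=|f|\leq M$ a.e. on $B$, the truncation leaves $\tilde{f}$ unchanged on $B$, so $\hat{f}|_{B}=f$ almost everywhere. Hence the identification-of-values and boundedness requirements are automatic.

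The only remaining step is to check that truncation preserves the $VMO$ bound. Because the map $t\mapsto \max(-M,\min(M,t))$ is $1$-Lipschitz on $\mathbb{R}$, we have the pointwise inequality
\[
|\hat{f}(x)-\hat{f}(y)|\leq |\tilde{f}(x)-\tilde{f}(y)|,\qquad x,y\in\mathbb{R}^d.
\]
Picking any modulus $\omega$ witnessing $\tilde{f}\in VMO$, this gives
\[
\sup_{z\in\mathbb{R}^d,\,r<R} r^{-2d}\int_{B_r(z)}\int_{B_r(z)}|\hat{f}(x)-\hat{f}(y)|\,dx\,dy\leq \omega(R),\qquad R>0,
\]
so $\hat{f}\in VMO$ with the same modulus. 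This completes the construction. No step appears to be a real obstacle; the only point worth flagging is the implicit use of the fact that the $L^{\infty}$-bound of $f$ on $B$ controls $\tilde{f}$ on $B$ up to null sets, which is immediate from $\tilde{f}|_{B}=f$ a.e.
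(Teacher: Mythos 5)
Your proposal is correct and follows essentially the same route as the paper: extend via the ambient $\tilde{f}\in VMO$ guaranteed by the definition of $VMO(B)$, then post-compose with a Lipschitz map that fixes the range of $f$ and is bounded, using the Lipschitz estimate to transfer the $VMO$ modulus. The paper uses a smooth compactly supported $\eta$ with $\eta(t)=t$ on $[-M-1,M+1]$ and the mean value theorem, whereas you use the plain truncation at level $M$; this is a cosmetic difference (your constant is even $1$ rather than $\|\eta'\|_{L^{\infty}}$) and both arguments are valid.
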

\begin{proof}
Since $f \in VMO(B)$, there exists $\tilde{f} \in VMO$ such that $\tilde{f}|_B = f$. In particular, there exists a positive continuous function $\omega$ on $[0, \infty)$ such that
$$
\sup_{z \in \mathbb{R}^d, r < R} r^{-2d} \int_{B_r(z)} \int_{B_r(z)} |\tilde{f}(x) - \tilde{f}(y)| \, dx \, dy \leq \omega(R), \quad \text{ for every $R>0$}.
$$
Let $M:=\|f\|_{L^{\infty}(B)}$ and choose $\eta \in C_0^{\infty}(\mathbb{R})$ such that 
$\eta(t)=t$ for any $t \in [-M-1, M+1]$ and $\eta(t)=0$ for any $t \in \mathbb{R} \setminus (-2M-2, 2M+2)$. Define $\hat{f}:= \eta \circ \tilde{f}$. Then, $\hat{f} \in L^{\infty}(\mathbb{R}^d)$ and $\hat{f}|_B=f$. Now, let $z \in \mathbb{R}^d$ and $R>0$. Choose $r \in (0, R)$. Then for each $x,y \in B_r(z)$, by the mean value theorem
$$
|\hat{f}(x)-\hat{f}(y)|=|(\eta \circ \tilde{f}) (x) - (\eta \circ \tilde{f})(y)| \leq \| \eta'\|_{L^{\infty}(\mathbb{R})} |\tilde{f}(x)-\tilde{f}(y)|.
$$
Therefore,  for every $R > 0$
$$
\sup_{z \in \mathbb{R}^d, r < R} r^{-2d} \int_{B_r(z)} \int_{B_r(z)} |\hat{f}(x) - \hat{f}(y)| \, dx \, dy \leq \| \eta'\|_{L^{\infty}(\mathbb{R})}  \omega(R),
$$
and hence $\hat{f} \in VMO$, as wished.
\end{proof}

\begin{prop}
Let $U$ be an open subset of $\mathbb{R}^d$ with $d \geq 1$ and $f,g \in VMO_{loc}(U) \cap L^{\infty}_{loc}(U)$. Then, $fg \in VMO_{loc}(U) \cap L^{\infty}_{loc}(U)$.
\end{prop}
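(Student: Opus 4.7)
The plan is to reduce this to the global statement Proposition \ref{algprovmo} by using the extension result Lemma \ref{bdextenlem}. By definition, to show $fg \in VMO_{loc}(U) \cap L^{\infty}_{loc}(U)$, it suffices to fix an arbitrary open ball $B$ with $\overline{B} \subset U$ and verify that $fg \in VMO(B) \cap L^{\infty}(B)$.

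First I would observe that since $\overline{B}$ is a compact subset of $U$ and $f,g \in L^{\infty}_{loc}(U)$, both $f$ and $g$ are essentially bounded on $B$, so $fg \in L^{\infty}(B)$ trivially. Likewise, since $f,g \in VMO_{loc}(U)$ and $\overline{B} \subset U$, we have $f,g \in VMO(B) \cap L^{\infty}(B)$.

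Next, I would apply Lemma \ref{bdextenlem} to produce $\hat{f}, \hat{g} \in VMO \cap L^{\infty}(\mathbb{R}^d)$ with $\hat{f}|_B = f$ and $\hat{g}|_B = g$. Proposition \ref{algprovmo} then gives $\hat{f}\hat{g} \in VMO \cap L^{\infty}(\mathbb{R}^d)$. Restricting to $B$ yields $(\hat{f}\hat{g})|_B = fg$, which by the definition of $VMO(B)$ (namely, being the restriction to $B$ of a function in $VMO$) shows $fg \in VMO(B)$. Since $B$ was arbitrary, we conclude $fg \in VMO_{loc}(U) \cap L^{\infty}_{loc}(U)$.

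There is no real obstacle here: the argument is a two-line glue of the preceding results and the only thing to check is that each input lemma is being applied with its hypotheses met. The essential boundedness of $f$ and $g$ on each relatively compact ball is what activates Lemma \ref{bdextenlem}, and that same boundedness is what Proposition \ref{algprovmo} exploits via the pointwise identity $|fg(x) - fg(y)| \leq \|f\|_{\infty}|g(x)-g(y)| + \|g\|_{\infty}|f(x)-f(y)|$ already established globally.
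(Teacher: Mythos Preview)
Your proof is correct and follows essentially the same approach as the paper: fix an open ball $B$ with $\overline{B}\subset U$, use Lemma \ref{bdextenlem} to extend $f,g\in VMO(B)\cap L^\infty(B)$ to $\hat f,\hat g\in VMO\cap L^\infty(\mathbb{R}^d)$, apply Proposition \ref{algprovmo} to obtain $\hat f\hat g\in VMO\cap L^\infty(\mathbb{R}^d)$, and restrict back to $B$. Your write-up is in fact slightly more explicit than the paper's about the $L^\infty_{loc}$ part and about why each hypothesis is satisfied.
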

\begin{proof}
Let $B$ be an open ball in $\mathbb{R}^d$ with $\overline{B} \subset U$. Then, $f,g \in VMO(B) \cap L^{\infty}(B)$. By Lemma \ref{bdextenlem}, 
there exists $\hat{f}, \hat{g} \in VMO \cap L^{\infty}(\mathbb{R}^d)$ such that $\hat{f}|_B = f$ and $\hat{g}|_B = g$.
By Proposition \ref{algprovmo}, $\hat{f} \hat{g} \in VMO \cap L^{\infty}(\mathbb{R}^d)$. Since, $\hat{f} \hat{g}|_{B} = fg$, the assertion follows.
\end{proof}

\begin{prop} \label{limitmatri}
Let $U$ be an open subset of $\mathbb{R}^d$ with $d \geq 2$, $B = (b_{ij})_{1 \leq i,j \leq d}$ be a matrix of locally integrable functions on $U$ and $\mathbf{E}=(e_1,\ldots, e_d) \in L^1_{loc}(U, \mathbb{R}^d)$ be such that ${\rm{div}} B = \mathbf{E}$.  Let $\Phi=(\phi_1, \ldots, \phi_d) \in C_0^{\infty}(U)^d$ be arbitrary and $V$ be an open set in $\mathbb{R}^d$  such that 
$\text{\rm supp} \, \phi_j \subset V \subset \overline{V} \subset U$ for any $j=1, \ldots, d$. Let $B_n=(b^n_{ij})_{1 \leq i,j \leq d, n \geq 1}$ be a sequence of matrices of functions in $C^1(\overline{V})$ such that $\lim_{n \rightarrow \infty} b^n_{ij} =b_{ij}$ weakly in $L^1(V)$ for each $1 \leq i,j \leq d$. Then,
\begin{align*}
\int_{U} \langle \text{\rm div} B, \Phi \rangle dx= \lim_{n \rightarrow \infty}  \int_{U} \sum_{j=1}^d \Big(\sum_{i=1}^d \partial_i b^n_{ij}\Big)  \phi_j dx.
\end{align*}
\end{prop}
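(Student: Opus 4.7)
The plan is to use integration by parts on the right-hand side to transfer the derivative from $b^n_{ij}$ onto $\phi_j$, then invoke the weak $L^1$ convergence $b^n_{ij} \rightharpoonup b_{ij}$ against the bounded test functions $\partial_i \phi_j$, and finally identify the resulting limit with $\int_U \langle \mathbf{E}, \Phi\rangle\,dx$ via the definition of $\text{div} B = \mathbf{E}$.

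Concretely, I would proceed as follows. Since $b^n_{ij} \in C^1(\overline{V})$ and $\text{supp}(\phi_j) \subset V$, the classical divergence theorem (or componentwise integration by parts) gives, for each $n$,
\begin{equation*}
\int_U \sum_{j=1}^d \Bigl( \sum_{i=1}^d \partial_i b^n_{ij} \Bigr) \phi_j\,dx \;=\; \int_V \sum_{i,j=1}^d \bigl(\partial_i b^n_{ij}\bigr) \phi_j\,dx \;=\; -\int_V \sum_{i,j=1}^d b^n_{ij}\, \partial_i \phi_j\,dx,
\end{equation*}
where the boundary terms vanish because each $\phi_j$ is compactly supported in $V$.

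Next, for each fixed pair $(i,j)$ we have $\partial_i \phi_j \in C_0^\infty(V) \subset L^\infty(V)$, so the weak convergence $b^n_{ij} \rightharpoonup b_{ij}$ in $L^1(V)$ yields
\begin{equation*}
\lim_{n \to \infty} \int_V b^n_{ij}\, \partial_i \phi_j\,dx \;=\; \int_V b_{ij}\, \partial_i \phi_j\,dx.
\end{equation*}
Summing over $i,j$ and combining with the previous display,
\begin{equation*}
\lim_{n \to \infty} \int_U \sum_{j=1}^d \Bigl( \sum_{i=1}^d \partial_i b^n_{ij} \Bigr) \phi_j\,dx \;=\; -\int_V \sum_{i,j=1}^d b_{ij}\, \partial_i \phi_j\,dx \;=\; -\int_U \sum_{i,j=1}^d b_{ij}\, \partial_i \phi_j\,dx,
\end{equation*}
where the last equality extends the integration trivially since $\partial_i \phi_j$ is supported in $V$.

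Finally, by the definition of $\text{div} B = \mathbf{E}$ in Definition \ref{basidefn}(i) applied to the test vector field $\Phi = (\phi_1, \ldots, \phi_d) \in C_0^\infty(U)^d$,
\begin{equation*}
-\int_U \sum_{i,j=1}^d b_{ij}\, \partial_i \phi_j\,dx \;=\; \int_U \sum_{j=1}^d e_j \phi_j\,dx \;=\; \int_U \langle \mathbf{E}, \Phi \rangle\,dx \;=\; \int_U \langle \text{div} B, \Phi \rangle\,dx,
\end{equation*}
which is exactly the desired identity. No serious obstacle is anticipated; the only subtlety is ensuring that weak $L^1(V)$ convergence can indeed be tested against $\partial_i \phi_j$, which is immediate since these test functions are smooth and bounded on $V$.
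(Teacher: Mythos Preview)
Your proof is correct and follows essentially the same approach as the paper: both arguments combine integration by parts (to move the derivative between $b^n_{ij}$ and $\phi_j$), the weak $L^1(V)$ convergence tested against the bounded functions $\partial_i\phi_j$, and the defining identity for $\mathrm{div}\,B=\mathbf{E}$. The only cosmetic difference is the order in which the chain of equalities is written.
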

\begin{proof}
By the definition of ${\rm div} B$ and the conditions above, we obtain that
\begin{align*}
\int_{U} \langle \text{\rm div} B, \Phi \rangle dx&= \int_{U} \sum_{j=1}^d e_j \phi_j \, dx=-\int_{U} \sum_{i,j=1}^d b_{ij} \partial_i \phi_j \, dx = \lim_{n \rightarrow \infty} -\int_{U} \sum_{i,j=1}^d b^n_{ij} \partial_i \phi_j \, dx\\
&= \lim_{n \rightarrow \infty} \int_{U} \sum_{j=1}^d \Big(\sum_{i=1}^d \partial_i b^n_{ij}\Big)  \phi_j dx,
\end{align*}
as desired.
\end{proof}

\begin{prop} \label{limitvecfi}
Let $U$ be an open subset of $\mathbb{R}^d$ with $d \geq 2$, $\mathbf{F} \in L^1_{loc}(U, \mathbb{R}^d)$ and $f \in L^1_{loc}(U)$ be such that ${\rm div} \mathbf{F} =f$. Let $\phi \in C_0^{\infty}(U)$ be arbitrary and $V$ be an open set in $\mathbb{R}^d$ such that 
$\text{\rm supp} \, \phi \subset V \subset \overline{V} \subset U$. Let $(\mathbf{F}_n)_{n \geq 1}$ be a sequence of vector fields in $C^1(\overline{V}, \mathbb{R}^d)$ such that $\lim_{n \rightarrow \infty} \mathbf{F}_n =\mathbf{F}$ weakly in $L^1(V, \mathbb{R}^d)$. Then,
\begin{align*}
\int_{U} \text{\rm div} \mathbf{F} \cdot \phi  \,dx= \lim_{n \rightarrow \infty}  \int_{U} \text{\rm div} \mathbf{F}_n \cdot \phi  \,dx.
\end{align*}
\end{prop}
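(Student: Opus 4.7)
The plan is to mirror the proof of Proposition \ref{limitmatri}, translating it from matrix fields to vector fields. The weak-divergence definition converts both sides of the desired identity into integrals of the form $-\int \langle \cdot, \nabla \phi \rangle \, dx$, at which point the conclusion reduces to a direct application of the definition of weak $L^1$ convergence against a bounded test function.

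First I would use the hypothesis $\operatorname{div} \mathbf{F} = f$ together with Definition \ref{basidefn}(ii) applied to $\varphi = \phi$ to rewrite the left-hand side as
\[
\int_{U} \operatorname{div} \mathbf{F} \cdot \phi \, dx = \int_{U} f \phi \, dx = -\int_{U} \langle \mathbf{F}, \nabla \phi \rangle \, dx = -\int_{V} \langle \mathbf{F}, \nabla \phi \rangle \, dx,
\]
where the last equality uses that $\operatorname{supp} \phi \subset V$ forces $\nabla \phi$ to vanish outside $V$. Second, because each $\mathbf{F}_n$ belongs to $C^1(\overline{V}, \mathbb{R}^d)$ and $\phi \in C_0^{\infty}(V)$, the classical integration-by-parts formula (no boundary terms appear since $\phi$ vanishes near $\partial V$) yields
\[
\int_{U} \operatorname{div} \mathbf{F}_n \cdot \phi \, dx = \int_{V} \operatorname{div} \mathbf{F}_n \cdot \phi \, dx = -\int_{V} \langle \mathbf{F}_n, \nabla \phi \rangle \, dx.
\]

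The remaining step is the passage to the limit. The vector field $\nabla \phi$ is smooth with compact support contained in $V$, so each of its components lies in $L^{\infty}(V)$. Hence, viewing the pairing $\mathbf{G} \mapsto \int_V \langle \mathbf{G}, \nabla \phi \rangle \, dx$ as the action of the bounded linear functional on $L^1(V, \mathbb{R}^d)$ induced by $\nabla \phi \in L^{\infty}(V, \mathbb{R}^d)$, the weak convergence $\mathbf{F}_n \to \mathbf{F}$ in $L^1(V, \mathbb{R}^d)$ gives
\[
\lim_{n \to \infty} \int_{V} \langle \mathbf{F}_n, \nabla \phi \rangle \, dx = \int_{V} \langle \mathbf{F}, \nabla \phi \rangle \, dx.
\]
Combining this with the two identities from the previous paragraph produces the claimed equality.

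There is essentially no obstacle here; the only subtle point to verify is that $\nabla \phi$ serves as a legitimate test vector for weak $L^1$ convergence on $V$, which follows immediately from $\phi \in C_0^{\infty}(U)$ with $\operatorname{supp} \phi \subset V$. The whole argument is a direct transcription of the matrix-valued version (Proposition \ref{limitmatri}) with scalar-valued test functions replaced by the scalar $\phi$ paired against the vector $\nabla \phi$.
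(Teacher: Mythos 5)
Your argument is correct and is essentially identical to the paper's proof, which consists of the same chain of equalities: the weak-divergence definition on the left, classical integration by parts for the smooth $\mathbf{F}_n$ on the right, and weak $L^1(V,\mathbb{R}^d)$ convergence tested against the bounded function $\nabla\phi$ to pass to the limit. You merely spell out the justification of each link more explicitly than the paper does.
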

\noindent
\begin{proof}
By the definition, we directly get
$$
\int_{U} {\rm div} \mathbf{F} \cdot \phi\,dx = -\int_{U} \langle \mathbf{F}, \nabla \phi \rangle dx = \lim_{n \rightarrow \infty} -\int_{U} \langle \mathbf{F}_n, \nabla \phi \rangle dx = \lim_{n \rightarrow \infty} \int_{U} {\rm div} \mathbf{F}_n \cdot \phi\,dx.
$$
\end{proof}

\begin{prop} \label{convdivnon}
Let $d \geq 2$, $U$ be an open subset of $\mathbb{R}^d$, $u \in H^{2,1}_{loc}(U)$ with $\nabla u \in L^2_{loc}(U, \mathbb{R}^d)$, and $A=(a_{ij})_{1 \leq i,j \leq d}$ be a (possibly non-symmetric)
matrix of functions in $L^{\infty}_{loc}(U)$ with ${\rm div} A \in L_{loc}^2(U, \mathbb{R}^d)$. Then, 
$$
{\rm div } (A \nabla u) = {\rm trace}(A \nabla^2 u) + \langle {\rm  div} A, \nabla u \rangle \;\; \text{ in $U$}.
$$
\end{prop}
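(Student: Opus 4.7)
The plan is to prove the identity by mollifying the matrix $A$ to obtain smooth approximants $A_n$, apply the classical product rule pointwise for these smooth matrices, and then pass to the limit in the weak formulation.

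First I would check that both sides lie in $L^1_{loc}(U)$: the vector field $A\nabla u$ has components $\sum_j a_{ij}\partial_j u \in L^2_{loc}(U)$ (product of $L^\infty_{loc}$ and $L^2_{loc}$), so its distributional divergence is well-defined; $\mathrm{trace}(A\nabla^2 u)\in L^1_{loc}(U)$ since $a_{ij}\in L^\infty_{loc}$ and $\partial_i\partial_j u\in L^1_{loc}$; and $\langle\mathrm{div}\,A,\nabla u\rangle\in L^1_{loc}$ by H\"older, since $\mathrm{div}\,A\in L^2_{loc}$ and $\nabla u\in L^2_{loc}$. Fix now an arbitrary $\varphi\in C_0^\infty(U)$ and an open set $V$ with $\mathrm{supp}(\varphi)\subset V\subset\overline V\subset U$. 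Let $(\eta_n)_{n\ge 1}$ be a standard mollifier and set $A_n:=A*\eta_n$ componentwise on a neighborhood of $\overline V$; for $n$ large, $A_n\in C^\infty(\overline V)$, $A_n$ is bounded in $L^\infty(V)$ uniformly in $n$, $A_n\to A$ in $L^p(V)$ for all $p\in[1,\infty)$, and (crucially) the classical divergence $\mathrm{div}_c A_n$ coincides with $(\mathrm{div}\,A)*\eta_n$ on $V$, hence converges to $\mathrm{div}\,A$ in $L^2(V,\mathbb{R}^d)$.

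Because each $A_n$ is smooth and $u\in H^{2,1}_{loc}(U)$, the classical product rule for weak derivatives gives, componentwise, $\partial_i\bigl(\sum_j (a_n)_{ij}\partial_j u\bigr)=\sum_j\partial_i(a_n)_{ij}\cdot\partial_j u+\sum_j(a_n)_{ij}\partial_i\partial_j u$ in $L^1(V)$, and summing over $i$ yields the pointwise identity
\begin{equation*}
\mathrm{div}(A_n\nabla u)=\mathrm{trace}(A_n\nabla^2 u)+\langle\mathrm{div}_c A_n,\nabla u\rangle\quad\text{in } V.
\end{equation*}
Pairing this with $\varphi$ and integrating by parts on the left-hand side gives
\begin{equation*}
-\int_V\langle A_n\nabla u,\nabla\varphi\rangle\,dx=\int_V\mathrm{trace}(A_n\nabla^2 u)\,\varphi\,dx+\int_V\langle\mathrm{div}_c A_n,\nabla u\rangle\varphi\,dx.
\end{equation*}

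I would then pass to the limit $n\to\infty$ in each term: the first integral converges to $-\int_V\langle A\nabla u,\nabla\varphi\rangle dx$ by dominated convergence using the uniform $L^\infty$-bound on $A_n$ and $|\nabla u\,\nabla\varphi|\in L^1$; the trace integral converges similarly since $\partial_i\partial_j u\cdot\varphi\in L^1(V)$; the last integral converges since $\mathrm{div}_c A_n\to\mathrm{div}\,A$ in $L^2(V,\mathbb{R}^d)$ and $\partial_j u\cdot\varphi\in L^2(V)$. The resulting identity is precisely the defining relation of $\mathrm{div}(A\nabla u)=\mathrm{trace}(A\nabla^2 u)+\langle\mathrm{div}\,A,\nabla u\rangle$ in the sense of Definition \ref{basidefn}(ii). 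The main technical point, where one must be careful, is the identification $\mathrm{div}_c A_n=(\mathrm{div}\,A)*\eta_n$ on $V$; this is verified by a short Fubini argument testing against $\psi\in C_0^\infty(V)$ and using the definition of $\mathrm{div}\,A$ in Definition \ref{basidefn}(i) with the test field $\phi_i:=\psi*\tilde\eta_n$ (where $\tilde\eta_n(x):=\eta_n(-x)$), so that no appeal is needed to individual weak derivatives $\partial_i a_{ij}$, which in general do not exist separately in $L^2_{loc}$.
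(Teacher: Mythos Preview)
Your proof is correct and follows essentially the same mollification strategy as the paper. The only difference is that the paper mollifies both $A$ and $u$ (taking a double limit $\lim_m\lim_n$ and invoking Proposition~\ref{limitmatri} for the $n$-limit), whereas you mollify only $A$ and apply the product rule for a smooth function times a $W^{1,1}$ function directly, which lets you get by with a single limit; your explicit identification $\mathrm{div}_c A_n=(\mathrm{div}\,A)*\eta_n$ is the analogue of the paper's appeal to Proposition~\ref{limitmatri}.
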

\begin{proof}
Let $\varphi \in C_0^{\infty}(U)$ be fixed. Choose an open set $V$ in $\mathbb{R}^d$ such that  $\text{supp} (\varphi) \subset V \subset \overline{V} \subset U$.
Through a mollification, let $A_n=(a^n_{ij})_{1 \leq i,j \leq d, n \geq 1}$ be a sequence of matrices of functions in $C_0^{\infty}(\overline{V})$ such that
$$
\lim_{n \rightarrow \infty} a^n_{ij}=a_{ij}, \quad \text{ in $L^1(V)$\;\; \text{ for all $1 \leq i,j \leq d$}}.
$$
By a mollification and Sobolev's embedding, there exists $(u_m)_{m \geq 1} \subset C^{\infty}(\overline{V})$ such that 
$$
\lim_{m \rightarrow \infty} u_m =u \; \text{ in  } \; H^{2,1}(V), \quad \quad \lim_{m \rightarrow \infty}\nabla u_m = \nabla u\; \text{  in } L^{2}(V, \mathbb{R}^d).
$$
By Proposition \ref{limitmatri},
\begin{align*} 
\int_{U} \langle A \nabla u, \nabla \varphi \rangle dx &= \lim_{m \rightarrow \infty} \int_{U} \langle A \nabla u_m, \nabla \varphi \rangle dx =  \lim_{m \rightarrow \infty} \lim_{n \rightarrow \infty} \int_{U} \langle A_n \nabla u_m, \nabla \varphi \rangle dx \\
&=\lim_{m \rightarrow \infty} \lim_{n \rightarrow \infty} -\int_{U} \text{trace} (A_n \nabla^2 u_m) \, \varphi + \sum_{j=1}^d\Big(\sum_{i=1}^d \partial_i a^n_{ij} \Big) \varphi \partial_j u_m \, dx  \\
&= \lim_{m \rightarrow \infty} -\int_{U} \text{trace} (A \nabla^2 u_m) \, \varphi + \langle \text{div} A, \varphi \nabla u_m \rangle dx \\
&=-\int_{U} \Big( \text{trace} (A \nabla^2 u)  + \langle \text{div} A, \nabla u \rangle \Big) \varphi dx.
\end{align*}
Since $\varphi \in C_0^{\infty}(U)$ is arbitrarily chosen, the assertion follows.
\end{proof}

\begin{prop} \label{intbypartform}
Let $d \geq 2$, $U$ be an open subset of $\mathbb{R}^d$, $\rho \in H^{1,2}_{loc}(U) \cap L^{\infty}_{loc}(U)$, and $A=(a_{ij})_{1 \leq i,j \leq d}$ be a matrix of functions in $L^{\infty}_{loc}(U)$ with ${\rm div} A \in L_{loc}^2(U, \mathbb{R}^d)$. Then, 
\begin{equation} \label{solvingpr}
\text{\rm div} (\rho A) =\rho \,\text{\rm div} A +A^T\nabla \rho \in L_{loc}^2(U, \mathbb{R}^d).
\end{equation}
In particular, if $u \in H^{2,1}_{loc}(U)$ with $\nabla u \in L^2_{loc}(U)$, then
$$
\int_{U} \langle \rho A \nabla u, \nabla \varphi \rangle\,dx = -\int_{U} {\rm trace} (\rho A \nabla^2 u) + \langle \rho \,\text{\rm div} A +A^T\nabla \rho, \nabla u \rangle \,dx, \quad \text{ for all $\varphi \in C_0^{\infty}(U)$}.
$$
\end{prop}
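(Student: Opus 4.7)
The plan is to prove the proposition in two stages: first establish the product rule identity $\text{div}(\rho A) = \rho\,\text{div}\,A + A^T \nabla\rho$, and then deduce the integration-by-parts formula by applying Proposition \ref{convdivnon} to the matrix $\rho A$.

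For the first stage, I would fix an arbitrary test vector field $\Phi=(\phi_1,\ldots,\phi_d) \in C_0^\infty(U)^d$, pick a bounded open set $V$ with $\text{supp}(\Phi) \subset V \subset \overline{V} \subset U$, and mollify $\rho$ to produce a sequence $(\rho_n)_{n\ge 1} \subset C^\infty(\overline{V})$ such that $\rho_n \to \rho$ in $H^{1,2}(V)$ and $\|\rho_n\|_{L^\infty(V)}$ remains uniformly bounded with $\rho_n \to \rho$ a.e. on $V$. For each $n$, the product $\rho_n \phi_j$ lies in $C_0^\infty(U)$, so $\Phi_n:=\rho_n\Phi$ is a legitimate test vector field for $\text{div}\,A = \mathbf{E}$. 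Applying the defining identity to $\Phi_n$ and expanding via the classical product rule (valid because $\rho_n$ is smooth) yields
\begin{equation*}
\int_U \sum_{i,j} a_{ij}\,\rho_n\,\partial_i\phi_j\,dx
= -\int_U \rho_n\langle \text{div}\,A, \Phi\rangle\,dx - \int_U \langle A^T \nabla\rho_n, \Phi\rangle\,dx.
\end{equation*}

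Passing to the limit $n\to\infty$ then gives the product rule. The left-hand side converges because $\rho_n \to \rho$ in $L^2(V)$ and $a_{ij}\partial_i\phi_j$ is bounded with compact support. The first term on the right converges by dominated convergence (using the uniform $L^\infty$-bound on $\rho_n$ and $\text{div}\,A \in L^2_{loc}$), and the second converges because $\nabla\rho_n \to \nabla\rho$ in $L^2(V,\mathbb{R}^d)$ while $A$ is locally bounded. Since $\Phi$ is arbitrary, this verifies \eqref{solvingpr}. The $L^2_{loc}$ membership is immediate: $\rho \in L^\infty_{loc}$ and $\text{div}\,A \in L^2_{loc}$ give $\rho\,\text{div}\,A \in L^2_{loc}$, while $A \in L^\infty_{loc}$ and $\nabla\rho \in L^2_{loc}$ give $A^T\nabla\rho \in L^2_{loc}$.

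For the second stage, I would set $\widetilde{A} := \rho A$, whose entries $\rho a_{ij}$ lie in $L^\infty_{loc}(U)$ and whose divergence, by the identity just established, equals $\rho\,\text{div}\,A + A^T \nabla\rho \in L^2_{loc}(U,\mathbb{R}^d)$. Thus $\widetilde{A}$ satisfies the hypotheses of Proposition \ref{convdivnon}, and combined with $u \in H^{2,1}_{loc}(U)$ with $\nabla u \in L^2_{loc}(U)$, that proposition gives
\begin{equation*}
\text{div}(\widetilde{A}\nabla u) = \text{trace}(\rho A\,\nabla^2 u) + \langle \rho\,\text{div}\,A + A^T\nabla\rho, \nabla u\rangle \quad\text{in } U.
\end{equation*}
The asserted integration-by-parts formula then follows immediately from the definition of distributional divergence applied to the vector field $\rho A \nabla u$, which is locally integrable since the right-hand side above is in $L^1_{loc}(U)$ (the trace term lies in $L^1_{loc}$ as $\rho A \in L^\infty_{loc}$ and $\nabla^2 u \in L^1_{loc}$, while the inner product is in $L^1_{loc}$ by Cauchy-Schwarz applied to two $L^2_{loc}$ factors).

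The only mildly delicate point is the mollification argument in the first stage, specifically ensuring that the mollified functions $\rho_n\phi_j$ are admissible test functions and that the product rule $\partial_i(\rho_n \phi_j) = (\partial_i \rho_n)\phi_j + \rho_n \partial_i \phi_j$ can be used inside the defining identity for $\text{div}\,A$; but since $\rho_n$ is genuinely smooth, this is classical. Everything else is a direct application of Proposition \ref{convdivnon}.
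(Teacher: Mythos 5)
Your proof is correct and follows essentially the same route as the paper: establish the product rule \eqref{solvingpr} by smooth approximation, then apply Proposition \ref{convdivnon} to the matrix $\rho A$ and unwind the definition of the distributional divergence. The only (harmless) difference is in the first stage, where the paper mollifies both $\rho$ and $A$ and passes through a double limit via Proposition \ref{limitmatri}, whereas you mollify only $\rho$ and test the defining identity of ${\rm div}\, A$ directly against the admissible test fields $\rho_n \Phi$ --- a slightly more economical variant of the same idea.
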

\begin{proof}
Let $\Phi=(\phi_1, \ldots, \phi_d) \in C_0^{\infty}(U)^d$ be fixed. Choose an open set $V$ in $\mathbb{R}^d$ such that  $\text{supp} (\phi_i) \subset V \subset \overline{V} \subset U$ for all $i=1, \ldots, d$. Through a mollification, choose a sequence of matrices of functions $(a_{n, ij})_{n \geq 1, 1\leq i,j \leq d}$ in $C^{\infty}(\overline{V})$ such that $\lim_{n \rightarrow \infty} a_{n, ij} = a_{ij}$ in $L^1(V)$ for all $1 \leq i,j \leq d$.
Then, by Proposition \ref{limitmatri}
\begin{align*}
& \int_{U} \sum_{i,j=1}^d  \rho  a_{ij} \, \partial_i \phi_j \, dx =\lim_{m \rightarrow \infty} \lim_{n \rightarrow \infty} \int_{U} \sum_{i,j=1}^d  \rho_m a_{n, ij} \, \partial_i \phi_j \, dx \\
&= 
\lim_{m \rightarrow \infty} \lim_{n \rightarrow \infty}-\int_{U} \sum_{i,j=1}^d \phi_j a_{n,ij} \partial_i \rho_m +\rho_m \phi_j \partial_i a_{n,ij} \,dx \\
&=\lim_{m \rightarrow \infty} -\int_{U} \sum_{i,j=1}^d \phi_j a_{ij} \partial_i \rho_m + \langle {\rm div A}, \rho_m \Phi \rangle \,dx  \\
&= -\int_{U} \langle A^T \nabla \rho+ \rho\, {\rm div} A, \Phi \rangle \,dx.
\end{align*}
Thus, \eqref{solvingpr} follows. Now, let $u \in H^{2,2}_{loc}(U)$ and $\varphi \in C_0^{\infty}(U)$.
As a consequence of Proposition \ref{convdivnon} and \eqref{solvingpr}, we obtain that
\begin{align*}
\int_{U} \langle \rho A \nabla u, \nabla \varphi \rangle dx &= -\int_{U} \Big(\text{trace} (\rho A \nabla^2 u) + \langle \text{div}(\rho A), \nabla u \rangle \Big) \varphi \,dx  \\
&=-\int_{U} \Big(\text{trace} (\rho A \nabla^2 u) + \langle \rho \,\text{\rm div} A +A^T\nabla \rho , \nabla u \rangle \Big) \varphi \,dx, 
\end{align*}
as desired.
\end{proof}

\begin{prop} \label{propdruled}
Let $d \geq 2$, $U$ be an open subset of $\mathbb{R}^d$, $u \in H^{1,2}_{loc}(V)$ and $\mathbf{F} \in L_{loc}^2(U, \mathbb{R}^d)$ be such that $\text{\rm div} \mathbf{F} \in L_{loc}^{2}(U)$. Then, 
$$
{\rm div}(u \mathbf{F}) = \langle \nabla u, \mathbf{F} \rangle + u \cdot  {\rm div} \mathbf{F} \;\; \text{\; \rm in $U$}.
$$
\end{prop}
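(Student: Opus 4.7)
The plan is to verify the product rule in the distributional sense by testing against an arbitrary $\varphi \in C_0^\infty(U)$. First I note that all the quantities in the claimed identity already belong to $L^1_{loc}(U)$: by Cauchy-Schwarz and the hypotheses $u \in H^{1,2}_{loc}(U)$, $\mathbf{F} \in L^2_{loc}(U, \mathbb{R}^d)$, $\text{\rm div}\mathbf{F} \in L^2_{loc}(U)$, each of $u\mathbf{F}$, $\langle \nabla u, \mathbf{F}\rangle$, and $u\cdot \text{\rm div}\mathbf{F}$ is locally integrable, so the asserted identity is meaningful as a distributional divergence.

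Fix $\varphi \in C_0^{\infty}(U)$ and choose an open set $V$ with $\text{\rm supp}(\varphi) \subset V \subset \overline{V} \subset U$. By standard mollification applied on a slightly larger neighborhood of $\overline{V}$, I would produce a sequence $(u_m)_{m \geq 1} \subset C^{\infty}(\overline{V})$ with $u_m \to u$ in $H^{1,2}(V)$, so that in particular $u_m \to u$ in $L^2(V)$ and $\nabla u_m \to \nabla u$ in $L^2(V, \mathbb{R}^d)$. For each $m$ the product $u_m \varphi$ is smooth with $\text{\rm supp}(u_m\varphi) \subset \text{\rm supp}(\varphi) \subset U$, hence $u_m \varphi \in C_0^{\infty}(U)$ is an admissible test function in the definition of $\text{\rm div}\mathbf{F}$.

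Using the elementary identity $u_m \nabla \varphi = \nabla(u_m \varphi) - \varphi \nabla u_m$ and the definition of $\text{\rm div}\mathbf{F}$ applied to the test function $u_m \varphi$, I obtain
\[
\int_{U} \langle u_m \mathbf{F}, \nabla \varphi \rangle\, dx
= \int_{U} \langle \mathbf{F}, \nabla(u_m \varphi)\rangle\, dx - \int_{U} \varphi \langle \mathbf{F}, \nabla u_m\rangle\, dx
= -\int_{U} u_m \varphi \cdot \text{\rm div}\mathbf{F}\, dx - \int_{U} \varphi \langle \mathbf{F}, \nabla u_m\rangle\, dx.
\]
Since $\mathbf{F}\nabla\varphi \in L^2(V, \mathbb{R}^d)$, $\varphi\, \text{\rm div}\mathbf{F} \in L^2(V)$, and $\varphi \mathbf{F} \in L^2(V, \mathbb{R}^d)$, each integral passes to the limit as $m \to \infty$ by Cauchy-Schwarz, yielding
\[
\int_{U} \langle u\mathbf{F}, \nabla \varphi \rangle\, dx = -\int_{U} \big( u \cdot \text{\rm div}\mathbf{F} + \langle \nabla u, \mathbf{F}\rangle \big) \varphi\, dx.
\]
Since $\varphi \in C_0^{\infty}(U)$ is arbitrary, this is precisely the assertion $\text{\rm div}(u\mathbf{F}) = \langle \nabla u, \mathbf{F}\rangle + u \cdot \text{\rm div}\mathbf{F}$ in $U$.

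The argument is essentially routine, so there is no serious obstacle. The only point that requires care is to mollify $u$ alone (rather than both $u$ and $\mathbf{F}$), so that the definition of $\text{\rm div}\mathbf{F}$ can be invoked directly on the smooth compactly supported test function $u_m\varphi$. This avoids any double limit and keeps the regularity transfer symmetric with the two $L^2$-factors on each side.
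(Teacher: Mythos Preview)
Your proof is correct. It differs from the paper's argument in that you mollify only $u$ and use $u_m\varphi\in C_0^\infty(U)$ directly as a test function in the defining identity for $\text{\rm div}\,\mathbf F$, whereas the paper mollifies both $u$ and $\mathbf F$, integrates by parts for smooth functions, and then passes a double limit (invoking Proposition~\ref{limitvecfi} to handle the term $\text{\rm div}\,\mathbf F_n$). Your route is slightly more economical: the single approximation avoids the auxiliary Proposition~\ref{limitvecfi} and the nested limits, at the cost of the small observation that $u_m\varphi$ is admissible as a test function. The paper's approach, while longer here, is consistent with the pattern used in the neighboring Propositions~\ref{convdivnon} and~\ref{intbypartform}, where approximating the rough coefficient object is essential; in the present proposition that extra approximation is simply not needed.
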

\begin{proof}
Let $\varphi \in C_0^{\infty}(U)$ be arbitrarily given. Choose an open set $V$ in $\mathbb{R}^d$ such that  $\text{supp} (\varphi) \subset V \subset \overline{V} \subset U$. Observe that there exists a sequence of vector fields $(\mathbf{F}_n)_{n \geq 1} \subset C_0^{\infty}(V, \mathbb{R}^d)$ such that $\lim_{n \rightarrow \infty} \mathbf{F}_n = \mathbf{F}$ in $L^2(V, \mathbb{R}^d)$. Moreover, through a mollification, there exists a sequence of functions $(u_m)_{m \geq 1} \subset C^{\infty}(\overline{V})$ such that $\lim_{m \rightarrow \infty} u_m = u$ in $H^{1,2}(V)$. Thus, we obtain from Proposition \ref{limitvecfi}
that
\begin{align*}
\int_{U}\langle u \mathbf{F}, \nabla \varphi \rangle dx &= \lim_{m \rightarrow \infty} \lim_{n \rightarrow \infty}  \int_{U}\langle u_m \mathbf{F}_n, \nabla \varphi \rangle dx= \lim_{m \rightarrow \infty} \lim_{n \rightarrow \infty} - \int_{U}\langle \nabla u_m, \mathbf{F}_n  \rangle \varphi + u_m \varphi \,{\rm div} {\bf F}_n dx \\
& = \lim_{m \rightarrow \infty} -\int_{U} \Big(\langle \nabla u_m, \mathbf{F} \rangle + u_m {\rm div} \mathbf{F}\Big) \varphi \,dx = -\int_{U} \Big( \langle \nabla u, \mathbf{F} \rangle + u\cdot {\rm div} \mathbf{F} \Big) \varphi \,dx.
\end{align*}
Thus, the assertion follows.
\end{proof}

\begin{prop} \label{varphidefn}
Let $\eta$ be a standard mollifier on $\mathbb{R}$, i.e.
$$
\eta(t):= 
\begin{cases} 
\frac{1}{\int_{(-1,1)} e^{\frac{1}{s^2-1}} \, ds } e^{\frac{1}{t^2-1}}, & \text{if } |t| < 1, \\
0, & \text{if } |t| \geq 1.
\end{cases}
$$ 
Let $\varepsilon>0$, $\eta_{\varepsilon}(t):=\frac{1}{\varepsilon} \eta\left( \frac{t}{\varepsilon} \right)$, and $\psi_{\varepsilon}(t):= \left( t \wedge (1+\varepsilon) \right) \vee (-\varepsilon)$, $t \in \mathbb{R}$. 
Define $\varphi_{\varepsilon} \in C^{\infty}(\mathbb{R})$ given by
$$
\varphi_{\varepsilon}(t):= \eta_{\frac{\varepsilon}{2}} \ast \psi_{\varepsilon}(t)= \int_{-\frac{\varepsilon}{2}}^{\frac{\varepsilon}{2}} \eta_{\frac{\varepsilon}{2}} (s) \psi_{\varepsilon}(t-s) ds, \quad t \in \mathbb{R}.
$$
Then, $\varphi_{\varepsilon}(t) \in [-\varepsilon, 1+\varepsilon]$ for all $t \in \mathbb{R}$ and
$$
0 \leq \varphi_{\varepsilon}(t_2)-\varphi_{\varepsilon}(t_1) \leq t_2 -t_1, \quad \text{ for all $t_1, t_2 \in \mathbb{R}$ with $t_1 \leq t_2$},
$$
so that $\varphi'_{\varepsilon}(t) \in [0,1]$ for all $t \in \mathbb{R}$.
Moreover, $\varphi_{\varepsilon}(t)=t$ for all $t \in [-\frac{\varepsilon}{2},1+\frac{\varepsilon}{2}]$, $\varphi_{\varepsilon}(t) =-\varepsilon$ for all $t \in (-\infty, -\frac{3\varepsilon}{2}]$ and $\varphi_{\varepsilon}(t) = 1+\varepsilon$ for all $t \in [1+\frac{3\varepsilon}{2}, \infty)$.
\end{prop}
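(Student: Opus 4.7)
The plan is to exploit three elementary properties of the cut-off $\psi_{\varepsilon}$: (a) $\psi_{\varepsilon}$ takes values in $[-\varepsilon, 1+\varepsilon]$, (b) $\psi_{\varepsilon}$ is nondecreasing and $1$-Lipschitz, and (c) $\psi_{\varepsilon}$ agrees with three explicit linear pieces on three explicit intervals. Together with the fact that $\eta_{\varepsilon/2} \geq 0$, $\int_{\mathbb{R}} \eta_{\varepsilon/2}(s)\,ds = 1$, $\text{supp}(\eta_{\varepsilon/2}) \subset [-\varepsilon/2, \varepsilon/2]$, and $\eta_{\varepsilon/2}$ is even, each claim will reduce to a short calculation on the convolution formula $\varphi_{\varepsilon}(t) = \int_{-\varepsilon/2}^{\varepsilon/2} \eta_{\varepsilon/2}(s)\,\psi_{\varepsilon}(t-s)\,ds$.

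First I would verify claim (1) by noting that for every $s$ in the integration range, $\psi_{\varepsilon}(t-s) \in [-\varepsilon, 1+\varepsilon]$, so multiplying by $\eta_{\varepsilon/2}(s) \geq 0$ and integrating against the probability density $\eta_{\varepsilon/2}$ yields $\varphi_{\varepsilon}(t) \in [-\varepsilon, 1+\varepsilon]$. For the monotonicity/Lipschitz claim (2), given $t_1 \leq t_2$, I would write
\[
\varphi_{\varepsilon}(t_2) - \varphi_{\varepsilon}(t_1) = \int_{-\varepsilon/2}^{\varepsilon/2} \eta_{\varepsilon/2}(s)\bigl[\psi_{\varepsilon}(t_2-s) - \psi_{\varepsilon}(t_1-s)\bigr]\,ds,
\]
and use that $\psi_{\varepsilon}$ is nondecreasing and $1$-Lipschitz to bound the bracket in $[0,\,t_2-t_1]$; integrating against the probability density $\eta_{\varepsilon/2}$ preserves these bounds, and then $\varphi'_{\varepsilon}(t) \in [0,1]$ follows by dividing and letting $t_2 \to t_1$.

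For the three piecewise identities (3)--(5), the point is simply to check that when $s$ varies in $[-\varepsilon/2,\varepsilon/2]$, the shifted argument $t-s$ remains in a region where $\psi_{\varepsilon}$ is a single affine piece. If $t \in [-\varepsilon/2, 1+\varepsilon/2]$, then $t - s \in [-\varepsilon, 1+\varepsilon]$ and $\psi_{\varepsilon}(t-s) = t-s$, whence
\[
\varphi_{\varepsilon}(t) = t\int_{-\varepsilon/2}^{\varepsilon/2}\eta_{\varepsilon/2}(s)\,ds - \int_{-\varepsilon/2}^{\varepsilon/2} s\,\eta_{\varepsilon/2}(s)\,ds = t,
\]
where the second integral vanishes by evenness of $\eta_{\varepsilon/2}$. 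The cases $t \leq -3\varepsilon/2$ (so $t - s \leq -\varepsilon$ and $\psi_{\varepsilon}(t-s) = -\varepsilon$) and $t \geq 1+3\varepsilon/2$ (so $t - s \geq 1+\varepsilon$ and $\psi_{\varepsilon}(t-s) = 1+\varepsilon$) are identical, each reducing to the constant times $\int \eta_{\varepsilon/2} = 1$.

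There is no genuine obstacle here; the statement is a packaging of standard mollifier facts, and the only item requiring slight care is the evenness argument used to obtain exact equality $\varphi_{\varepsilon}(t) = t$ on the middle interval, since without evenness one would only get an affine function of $t$ with slope one. Smoothness of $\varphi_{\varepsilon}$ follows at once because $\psi_{\varepsilon}$ is locally integrable and $\eta_{\varepsilon/2} \in C^{\infty}_0(\mathbb{R})$, so differentiation under the integral sign is justified.
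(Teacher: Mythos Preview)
Your proposal is correct and follows essentially the same approach as the paper: both arguments use that $\eta_{\varepsilon/2}$ is a nonnegative even probability density supported in $[-\varepsilon/2,\varepsilon/2]$ together with the range, monotonicity/1-Lipschitz property, and piecewise-affine structure of $\psi_{\varepsilon}$, and in particular both invoke evenness of $\eta_{\varepsilon/2}$ to get $\varphi_{\varepsilon}(t)=t$ on the middle interval. The only difference is cosmetic (you add a sentence on smoothness via differentiation under the integral sign, which the paper leaves implicit).
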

\begin{proof}
Let $\varepsilon>0$ be given. Since $\int_{-\frac{\varepsilon}{2}}^{\frac{\varepsilon}{2}} \eta_{\frac{\varepsilon}{2}} (s) ds =1$ and $-\varepsilon \leq \psi_{\varepsilon}(t) \leq 1+\varepsilon$ for all $t \in \mathbb{R}$, we get
$$
\varphi_{\varepsilon}(t)=\int_{-\frac{\varepsilon}{2}}^{\frac{\varepsilon}{2}} \eta_{\frac{\varepsilon}{2}} (s) \psi_{\varepsilon}(t-s) ds \in [-\varepsilon, 1+\varepsilon], \quad \text{ for all $t \in \mathbb{R}$}.
$$
Moreover, since $0 \leq \psi_{\varepsilon}(\tau_2) - \psi_{\varepsilon}(\tau_1)   \leq \int_{\tau_1}^{\tau_2} |\psi'_{\varepsilon}(u)| du \leq  \tau_2-\tau_1$ for all $\tau_1, \tau_2 \in \mathbb{R}$ with $\tau_1 \leq \tau_2$, we obtain that for each $t_1, t_2 \in \mathbb{R}$ with $t_1 \leq t_2$,
$$
\varphi_{\varepsilon}(t_2) - \varphi_{\varepsilon}(t_1) = \int_{-\frac{\varepsilon}{2}}^{\frac{\varepsilon}{2}} \eta_{\frac{\varepsilon}{2}} (s) \Big( \psi_{\varepsilon}(t_2-s)  - \psi_{\varepsilon}(t_1-s) \Big) ds \in [0, t_2-t_1].
$$
Observe that if $t \in [-\frac{\varepsilon}{2},1+\frac{\varepsilon}{2}]$ and $s \in [-\frac{\varepsilon}{2}, \frac{\varepsilon}{2}]$, then $t-s \in [-\varepsilon, 1+\varepsilon]$, so that $\psi_{\varepsilon}(t-s)=t-s$.
Thus, for each $t \in [-\frac{\varepsilon}{2},1+\frac{\varepsilon}{2}]$ we have
$$
\varphi_{\varepsilon}(t) = \int_{-\frac{\varepsilon}{2}}^{\frac{\varepsilon}{2}} \eta_{\frac{\varepsilon}{2}} (s) \psi_{\varepsilon}(t-s) ds =t \int_{-\frac{\varepsilon}{2}}^{\frac{\varepsilon}{2}} \eta_{\frac{\varepsilon}{2}} (s)ds-\int_{-\frac{\varepsilon}{2}}^{\frac{\varepsilon}{2}} s\eta_{\frac{\varepsilon}{2}} (s)ds = t.
$$
Likewise, $\varphi_{\varepsilon}(t) =-\varepsilon\int_{-\frac{\varepsilon}{2}}^{\frac{\varepsilon}{2}} \eta_{\frac{\varepsilon}{2}} (s) ds = -\varepsilon$ for all $t \in (-\infty, -\frac{3\varepsilon}{2}]$ and $\varphi_{\varepsilon}(t) =(1+\varepsilon)\int_{-\frac{\varepsilon}{2}}^{\frac{\varepsilon}{2}} \eta_{\frac{\varepsilon}{2}} (s) ds=1+\varepsilon$ for all $t \in [1+\frac{3\varepsilon}{2}, \infty)$, as desired.
\end{proof}

\begin{prop} \label{auxpropfir}
Let $\varepsilon>0$ and $\varphi_{\varepsilon}$ be a function defined as in Proposition \ref{varphidefn}. Then, the following hold:
\begin{itemize}
\item[(i)]
For each $t \in \mathbb{R}$ and $\varepsilon>0$, $|\varphi_{\varepsilon}(t)| \leq 1+\varepsilon$. Moreover, $\lim_{\varepsilon \rightarrow 0+} \varphi_{\varepsilon}(t)=t^+ \wedge 1$ for each $t \in \mathbb{R}$. 
\item[(ii)]
For each $t \in \mathbb{R}$ and $\varepsilon>0$, $|\varphi'_{\varepsilon}(t)| \leq 1$. Moreover, $\lim_{\varepsilon \rightarrow 0+} \varphi'_{\varepsilon}(t)=1_{[0,1]}(t)$ for each $t \in \mathbb{R}$. 
\end{itemize}
\end{prop}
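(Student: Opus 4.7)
The proof relies entirely on the explicit structural description of $\varphi_\varepsilon$ given by Proposition \ref{varphidefn}, and reduces to a routine case analysis based on the location of $t$ relative to the critical points $0$ and $1$. The plan is to first extract the uniform bounds as immediate consequences of what is already known, and then to compute the pointwise limits by exploiting the fact that, for each $t$, $\varphi_\varepsilon$ eventually coincides with one of three simple expressions on a neighborhood of $t$ when $\varepsilon$ is small enough.

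The uniform bound $|\varphi_\varepsilon(t)| \leq 1+\varepsilon$ in (i) is immediate from $\varphi_\varepsilon(t) \in [-\varepsilon, 1+\varepsilon]$, since $\varepsilon \leq 1+\varepsilon$. Similarly, $|\varphi_\varepsilon'(t)| \leq 1$ in (ii) is immediate from $\varphi_\varepsilon'(t) \in [0,1]$. For the pointwise limit in (i), I would split into five cases. If $t<0$, then for all $\varepsilon$ with $\varepsilon \leq -2t/3$ we have $t \leq -3\varepsilon/2$, hence $\varphi_\varepsilon(t) = -\varepsilon \to 0 = t^+ \wedge 1$. If $0 \leq t \leq 1$, then $t \in [-\varepsilon/2, 1+\varepsilon/2]$ for every $\varepsilon > 0$, so $\varphi_\varepsilon(t) = t = t^+ \wedge 1$. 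If $t>1$, then for all $\varepsilon$ with $\varepsilon \leq 2(t-1)/3$ we have $t \geq 1+3\varepsilon/2$, hence $\varphi_\varepsilon(t) = 1+\varepsilon \to 1 = t^+ \wedge 1$.

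For the limit in (ii), I would use the same trichotomy but observe that $\varphi_\varepsilon$ is not merely equal to a certain value at $t$ but is identically equal to $s \mapsto s$, or to a constant, on a whole neighborhood of $t$ once $\varepsilon$ is small enough; this is what lets the derivative be read off. Precisely, for $t \in [0,1]$ the interval $[-\varepsilon/2, 1+\varepsilon/2]$ is an open neighborhood of $t$ for every $\varepsilon > 0$ (at the endpoints it is a one-sided neighborhood inside an open neighborhood on which $\varphi_\varepsilon$ still equals the identity), so $\varphi_\varepsilon'(t) = 1 = 1_{[0,1]}(t)$. For $t<0$, choosing $\varepsilon$ so small that $t < -3\varepsilon/2$, the function $\varphi_\varepsilon$ is identically $-\varepsilon$ on a neighborhood of $t$, so $\varphi_\varepsilon'(t) = 0 = 1_{[0,1]}(t)$; the case $t>1$ is symmetric. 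There is no genuine obstacle here: the only thing to watch is to make sure that at the boundary points $t=0$ and $t=1$ one invokes the exact equality $\varphi_\varepsilon = \mathrm{id}$ on the closed interval $[-\varepsilon/2, 1+\varepsilon/2]$ from Proposition \ref{varphidefn}, which is strong enough to conclude $\varphi_\varepsilon'(0) = \varphi_\varepsilon'(1) = 1$.
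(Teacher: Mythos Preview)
Your proposal is correct and follows essentially the same approach as the paper's own proof: both extract the uniform bounds directly from Proposition \ref{varphidefn} and then perform the identical three-case analysis ($t<0$, $t\in[0,1]$, $t>1$), choosing $\varepsilon$ small enough that $t$ falls into one of the regions where $\varphi_\varepsilon$ is explicitly known. Your treatment is in fact slightly more careful at the endpoints $t=0,1$ than the paper's, which simply asserts $\varphi_\varepsilon'(t)=1$ there without comment.
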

\begin{proof}
(i) The first statement follows from Proposition \ref{varphidefn}.
Let $t<0$. Choose $\varepsilon_0>0$ so that $t<-\frac{3\varepsilon_0}{2}$. Then, if $\varepsilon \in (0, \varepsilon_0)$, then $\varphi_{\varepsilon}(t)=-\varepsilon$, so that  $\lim_{\varepsilon \rightarrow 0+}\varphi_{\varepsilon}(t)=0$. Next, let $t \in [0,1]$. Then, $\varphi_{\varepsilon}(t)=t$ for any $\varepsilon>0$, so that $\lim_{\varepsilon \rightarrow 0+} \varphi_{\varepsilon}(t)=t$. Finally, let $t>1$. Choose $\varepsilon_1>0$ so that $1+\frac{3\varepsilon_1}{2}<t$. Then, if $\varepsilon \in (0, \varepsilon_1)$, then $\varphi_{\varepsilon}(t)=1+\varepsilon$, so that  $\lim_{\varepsilon \rightarrow 0+}\varphi_{\varepsilon}(t)=1$. Thus, $\lim_{\varepsilon \rightarrow 0+} \varphi_{\varepsilon}(t)=t^+ \wedge 1$ for each $t \in \mathbb{R}$. \\ \\
(ii) The first statement follows from Proposition \ref{varphidefn}. Let $t<0$.  Choose $\varepsilon_0>0$ so that $t<-\frac{3\varepsilon_0}{2}$. Then, if $\varepsilon \in (0, \varepsilon_0)$, then $\varphi'_{\varepsilon}(t)=0$, so that $\lim_{\varepsilon \rightarrow 0+} \varphi'_{\varepsilon}(t)=0$.
Next, let $t \in [0,1]$. Then, $\varphi_{\varepsilon}'(t)=1$ for any $\varepsilon>0$, so that $\lim_{\varepsilon \rightarrow 0+} \varphi_{\varepsilon}'(t)=1$. Finally, let $t>1$. Choose $\varepsilon_1>0$ so that $1+\frac{3\varepsilon_1}{2}<t$. Then, if $\varepsilon \in (0, \varepsilon_1)$, then $\varphi'_{\varepsilon}(t)=0$, so that $\lim_{\varepsilon \rightarrow 0+} \varphi'_{\varepsilon}(t)=0$. In conclusion,
$\lim_{\varepsilon \rightarrow 0+} \varphi'_{\varepsilon}(t)=1_{[0,1]}(t)$ for each $t \in \mathbb{R}$. 
\end{proof}

\begin{prop} \label{auxprophi}
Let $\varepsilon>0$ be given. Then, there exists $\Phi_{\varepsilon} \in C^{\infty}(\mathbb{R})$ such that $\Phi_{\varepsilon}(t) =0$ for all $t \in (-\infty, 1]$, $\Phi'_{\varepsilon}(t)=1$ for all $t \in [1+\varepsilon, \infty)$, and $\Phi'_{\varepsilon}(t) \in [0,1]$ and $\Phi''_{\varepsilon}(t) \geq 0$  for all $t \in \mathbb{R}$. In particular, $\lim_{\varepsilon \rightarrow 0+}\Phi'_{\varepsilon}(t) = 1_{(1, \infty)}(t)$ for each $t \in \mathbb{R}$.
\end{prop}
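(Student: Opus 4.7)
The construction I would use is the standard mollification-of-an-indicator trick: prescribe the derivative $\Phi'_{\varepsilon}$ as a smooth non-decreasing interpolation from $0$ to $1$ across $[1, 1+\varepsilon]$, then integrate once more. Concretely, fix once and for all a non-negative bump $\rho \in C_0^{\infty}(\mathbb{R})$ with $\mathrm{supp}(\rho) \subset [0,1]$ and $\int_{\mathbb{R}} \rho(s)\,ds = 1$, and set
$$
\rho_{\varepsilon}(s) := \tfrac{1}{\varepsilon}\,\rho\!\left(\tfrac{s-1}{\varepsilon}\right), \qquad \psi_{\varepsilon}(t) := \int_{-\infty}^{t} \rho_{\varepsilon}(s)\,ds, \qquad \Phi_{\varepsilon}(t) := \int_{-\infty}^{t} \psi_{\varepsilon}(s)\,ds.
$$
By the choice of support, $\rho_{\varepsilon}\in C_0^{\infty}(\mathbb{R})$ is non-negative, supported in $[1, 1+\varepsilon]$, with total mass $1$, so $\psi_{\varepsilon}$ is smooth, non-decreasing, identically $0$ on $(-\infty,1]$, identically $1$ on $[1+\varepsilon,\infty)$, and takes values in $[0,1]$ throughout.

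The four required properties of $\Phi_{\varepsilon}$ would then fall out directly. Smoothness is inherited from $\psi_{\varepsilon}$. Since the integrand in the definition of $\Phi_{\varepsilon}$ vanishes on $(-\infty,1]$, one gets $\Phi_{\varepsilon} \equiv 0$ there. Differentiation under the integral yields $\Phi'_{\varepsilon}=\psi_{\varepsilon}$, hence $\Phi'_{\varepsilon}(t)\in[0,1]$ everywhere and $\Phi'_{\varepsilon}(t)=1$ for $t \geq 1+\varepsilon$; differentiating once more gives $\Phi''_{\varepsilon} = \rho_{\varepsilon} \geq 0$. For the pointwise limit I would split into cases: for $t \leq 1$ one has $\Phi'_{\varepsilon}(t)=0$ for every $\varepsilon>0$, while for $t>1$, any $\varepsilon \in (0, t-1)$ gives $t \geq 1+\varepsilon$ and hence $\Phi'_{\varepsilon}(t)=1$; in both cases the limit matches $1_{(1,\infty)}(t)$.

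I do not anticipate any real obstacle: the statement is a routine mollification construction. The only point requiring mild care is placing the underlying bump support in $[0,1]$ (rather than the symmetric interval used for $\varphi_{\varepsilon}$ in Proposition \ref{varphidefn}), so that after the affine change of variables the prescribed vanishing on the closed half-line $(-\infty,1]$ and the slope-one regime on $[1+\varepsilon,\infty)$ hold exactly, not merely up to an $O(\varepsilon)$ shift.
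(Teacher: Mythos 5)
Your proposal is correct and follows essentially the same route as the paper: both construct $\Phi_{\varepsilon}$ by integrating twice a non-negative smooth bump of unit mass supported in $[1,1+\varepsilon]$ (the paper realizes this bump as the shifted rescaled mollifier $\eta_{\varepsilon/2}(\cdot-1-\tfrac{\varepsilon}{2})$ from Proposition \ref{varphidefn}, you as an affine rescaling of a bump supported in $[0,1]$), and both verify the pointwise limit by the same case split at $t\le 1$ versus $t>1$. No gaps.
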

\begin{proof}
Let $\varepsilon>0$ and $\eta_{\varepsilon} \in C^{\infty}(\mathbb{R})$ be defined as in Proposition \ref{varphidefn}. Then, $\eta_{\varepsilon}(t)=0$ for all $t \in \mathbb{R} \setminus (-\varepsilon, \varepsilon)$ and $\int_{\mathbb{R}} \eta_{\varepsilon}(s) ds = 1$. Define $\zeta_{\varepsilon}$ given by
$$
\zeta_{\varepsilon}(t):= \int_{-\infty}^{t} \eta_{\frac{\varepsilon}{2}}\left(s-1-\frac{\varepsilon}{2}\right) ds, \quad t \in \mathbb{R}.
$$
Then, $\zeta_{\varepsilon}\in C^{\infty}(\mathbb{R})$ is an increasing function satisfying that $\zeta_{\varepsilon}(t)=0$ for all $t \in (-\infty, 1]$ and $\zeta_{\varepsilon}(t)=1$ for all $t \in [1+\varepsilon, \infty)$. Now, define
$$
\Phi_{\varepsilon}(t):= \int_{-\infty}^{t} \zeta_{\varepsilon}(s) ds, \quad t \in \mathbb{R}.
$$
Then, $\Phi_{\varepsilon}$ is our desired one. Indeed, let $t \in (-\infty, 1]$. Then, $\Phi'_{\varepsilon}(t)=\zeta_{\varepsilon}(t)=0$, so that $\lim_{\varepsilon \rightarrow 0+} \Phi'_{\varepsilon}(t) =0$. Finally, let $t \in (1, \infty)$. Then, choose $\varepsilon_0>0$ so that $1+\varepsilon_0<t$. Then, if $\varepsilon \in (0, \varepsilon_0)$, then $\Phi'_{\varepsilon}(t) = \zeta_{\varepsilon}(t) =1$, so that $\lim_{\varepsilon \rightarrow 0+} \Phi'_{\varepsilon}(t)=1$. Therefore, $\lim_{\varepsilon \rightarrow 0+}\Phi'_{\varepsilon}(t) = 1_{(1, \infty)}(t)$ for each $t \in \mathbb{R}$. Finally, since $\Phi''_{\varepsilon}(t) = \eta_{\frac{\varepsilon}{2}}\left(t-1-\frac{\varepsilon}{2}\right)  \geq 0$ for all $t \in \mathbb{R}$ and $\varepsilon > 0$, the assertion follows.
\end{proof}

\begin{lem} \label{auxresulem}
Let $B$ be an open ball in $\mathbb{R}^d$ with $d \geq 3$. Then, The following statements hold:
\begin{itemize}
\item[(i)]
If $\bar{\rho} \in H^{1,p}(B)$ with $p \in (d, \infty)$ and $\bar{h} \in H_0^{1,2}(B)$, then $\bar{\rho} \bar{h} \in H_0^{1,2}(B)$.

\item[(ii)]
If $\bar{\rho} \in H^{1,p}(B)$ with $p \in (d, \infty)$ and $\bar{h} \in H^{1,q}(B)$ with $q \in [2, \infty)$, then $\bar{\rho} \bar{h} \in H^{1, p \wedge q}(B)$.
\end{itemize}
\end{lem}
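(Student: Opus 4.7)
\textbf{Proof plan for Lemma \ref{auxresulem}.}

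Since $p>d$ and $B$ is a bounded open ball, the Sobolev embedding $H^{1,p}(B)\hookrightarrow C(\overline{B})$ gives $\bar\rho\in L^\infty(B)$, and $\nabla\bar\rho\in L^p(B,\mathbb{R}^d)\subset L^d(B,\mathbb{R}^d)$ since $B$ is bounded. For (i), the Sobolev embedding $H_0^{1,2}(B)\hookrightarrow L^{\frac{2d}{d-2}}(B)$ (available because $d\geq 3$) will be the main tool; for (ii), I will use the Sobolev embedding $H^{1,q}(B)\hookrightarrow L^{q^\ast}(B)$ with $q^\ast=\frac{dq}{d-q}$ when $q<d$, $L^r$ for all $r<\infty$ when $q=d$, and $L^\infty(B)$ when $q>d$.

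\textbf{Step 1 (product rule in the weak sense).} First I would justify that the classical product formula $\nabla(\bar\rho\bar h)=\bar\rho\nabla\bar h+\bar h\nabla\bar\rho$ holds weakly on $B$. Approximating $\bar\rho$ by mollifications $\bar\rho_\varepsilon\in C^\infty(\overline{V})$ (on $\overline{V}\subset B$) converging to $\bar\rho$ in $H^{1,p}(V)$ and uniformly on compacts, the formula holds classically for $\bar\rho_\varepsilon\bar h$. Passing to the limit against a test function $\varphi\in C_0^\infty(B)$, the dominated convergence theorem applies once one verifies the integrability of the limiting integrands by H\"older (see Step~3). This yields the product rule as an identity of locally integrable functions, provided the limiting integrands are locally integrable, which is exactly what the case analysis below establishes.

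\textbf{Step 2 (proof of (i)).} Take $h_n\in C_0^\infty(B)$ with $h_n\to\bar h$ in $H^{1,2}(B)$. Each $\bar\rho h_n$ has compact support contained in $\mathrm{supp}(h_n)\subset B$, lies in $H^{1,2}(B)$ (by Step~1, together with $\bar\rho\in L^\infty$ and $h_n\in C_0^\infty$), and therefore, extending by zero and mollifying, belongs to $H_0^{1,2}(B)$. I then estimate
\[
\|\bar\rho h_n-\bar\rho\bar h\|_{L^2(B)}\leq \|\bar\rho\|_{L^\infty(B)}\|h_n-\bar h\|_{L^2(B)}\longrightarrow 0,
\]
and for the gradient, using Step~1, H\"older, and $\nabla\bar\rho\in L^d(B)$,
\[
\|\nabla(\bar\rho h_n)-\nabla(\bar\rho\bar h)\|_{L^2(B)}\leq \|\bar\rho\|_{L^\infty(B)}\|\nabla(h_n-\bar h)\|_{L^2(B)}+\|h_n-\bar h\|_{L^{\frac{2d}{d-2}}(B)}\|\nabla\bar\rho\|_{L^d(B)},
\]
both of which tend to zero by $H^{1,2}$-convergence and the Sobolev embedding for $H_0^{1,2}(B)$. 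Since $H_0^{1,2}(B)$ is closed in $H^{1,2}(B)$, it follows that $\bar\rho\bar h\in H_0^{1,2}(B)$.

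\textbf{Step 3 (proof of (ii)).} Set $r:=p\wedge q$. I must verify that $\bar\rho\bar h$, $\bar\rho\nabla\bar h$, and $\bar h\nabla\bar\rho$ all lie in $L^r(B)$. Since $\bar\rho\in L^\infty(B)$ and $B$ is bounded, $\bar\rho\bar h\in L^\infty\cdot L^q\subset L^r(B)$ and $\bar\rho\nabla\bar h\in L^\infty\cdot L^q\subset L^r(B)$. For the remaining term, I split on $q$: if $q>d$, then $\bar h\in L^\infty(B)$, giving $\bar h\nabla\bar\rho\in L^p(B)\subset L^r(B)$; if $q=d$, then $\bar h\in L^s(B)$ for every finite $s$, which suffices; if $2\le q<d$, then by H\"older
\[
\tfrac{1}{q^\ast}+\tfrac{1}{p}=\tfrac{d-q}{dq}+\tfrac{1}{p}\leq \tfrac{d-q}{dq}+\tfrac{1}{d}=\tfrac{1}{q}=\tfrac{1}{r},
\]
where the inequality uses $p>d$, so $\bar h\nabla\bar\rho\in L^s(B)\subset L^r(B)$ with $\tfrac{1}{s}=\tfrac{1}{q^\ast}+\tfrac{1}{p}$. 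Combining with Step~1 yields $\bar\rho\bar h\in H^{1,r}(B)=H^{1,p\wedge q}(B)$.

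\textbf{Expected main obstacle.} The only delicate point is the case analysis in Step~3 ensuring that the Hölder-Sobolev exponents work out when $q<d$; everything else reduces to direct integrability estimates and the standard fact that compactly-supported $H^{1,2}$ functions lie in $H_0^{1,2}$.
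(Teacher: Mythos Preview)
Your proposal is correct and follows essentially the same strategy as the paper's proof: both arguments rely on the Sobolev embedding $H^{1,p}(B)\hookrightarrow L^\infty(B)$ together with $\nabla\bar\rho\in L^d(B)$, use an approximating sequence in $C_0^\infty(B)$ for part (i), and carry out the same case split on $q$ (subcritical, critical, supercritical) for part (ii). The only cosmetic difference is that the paper approximates both $\bar\rho$ and $\bar h$ simultaneously in (i), whereas you keep $\bar\rho$ fixed and approximate only $\bar h$; both variants work for the same reason.
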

\noindent
\begin{proof}
(i) Let $\bar{\rho} \in H^{1,p}(B)$ with $p \in (d, \infty)$ and $\bar{h} \in H_0^{1,2}(B)$, Then, by \cite[Theorem 4.3]{EG15}
there exist sequences $(\bar{\rho}_n)_{n \geq 1} \subset C^{\infty}(\overline{B})$ and $(\bar{h}_n)_{n \geq 1} \subset C_0^{\infty}(B)$ such that $\lim_{n \rightarrow \infty} \bar{\rho}_n =\bar{\rho}$ in $H^{1,p}(B)$ and $\lim_{n \rightarrow \infty} \bar{h}_n = \bar{h}$ in $H^{1,2}(B)$.  Observe that $\bar{\rho}_n \bar{h}_n \in C_0^{\infty}(B)$ and
$$
\nabla (\bar{\rho}_n \bar{h}_n) = \bar{h}_n \nabla \bar{\rho}_n + \bar{\rho}_n \nabla \bar{h}_n.
$$
Since $\lim_{n \rightarrow \infty} \bar{h}_n = \bar{h}$ in $L^{\frac{2d}{d-2}}(B)$, $\lim_{n \rightarrow \infty} \nabla \bar{\rho}_n = \nabla \bar{\rho}$ in $L^d(B, \mathbb{R}^d)$, and $\lim_{n \rightarrow \infty} \bar{\rho}_n = \bar{\rho}$ in $L^{\infty}(B)$, $\lim_{n \rightarrow \infty} \nabla \bar{h}_n = \nabla \bar{h}$ in $L^2(B, \mathbb{R}^d)$, we obtain that 
\begin{align*}
\lim_{n \rightarrow \infty} \nabla (\bar{\rho}_n \bar{h}_n)  = \bar{h}  \nabla \bar{\rho} + \bar{\rho} \nabla \bar{h} = \nabla (\bar{\rho} \bar{h}), \quad \text{ in $L^2(B, \mathbb{R}^d)$}.
\end{align*}
Similarly, we have $\lim_{n \rightarrow \infty} \bar{\rho}_n \bar{h}_n  = \bar{\rho} \bar{h}$ in $L^2(B)$. Therefore, $\bar{\rho} \bar{h} \in H^{1,2}_0(B)$. \\ \\
(ii) Let $\bar{\rho} \in H^{1,p}(B)$ with $p \in (d, \infty)$ and $\bar{h} \in H^{1,q}(B)$ with $q \in [2, \infty)$. Then, $\bar{\rho} \bar{h} \in H^{1,1}(B)$ and
$$
\nabla (\bar{\rho} \bar{h}) = \bar{h} \nabla \bar{\rho} + \bar{\rho} \nabla \bar{h}.
$$
Note that $\bar{\rho} \nabla \bar{h} \in L^q(B)$. Now consider the following cases: \\ \\
Case 1): If $q \in [2, d)$, then $\bar{h} \in L^{\frac{qd}{d-q}}(B)$. Since $\nabla \bar{\rho} \in L^d(B, \mathbb{R}^d)$, we get $\bar{h} \nabla \bar{\rho} \in L^q(B, \mathbb{R}^d)$. \\
Case 2): If $q = d$, then $\bar{h} \in H^{1,d}(B) \subset L^{\frac{dp}{p-d}}(B)$. Since $\nabla \bar{\rho} \in L^p(B, \mathbb{R}^d)$, we have $\bar{h} \nabla \bar{\rho} \in L^d(B, \mathbb{R}^d)$. \\
Case 3): If $q \in (d, \infty)$, then $\bar{h} \in H^{1,q}(B) \subset L^{\infty}(B)$. Thus, $\bar{h} \nabla \bar{\rho} \in L^p(B, \mathbb{R}^d)$. \\ \\
From Cases 1--3, we conclude that $\nabla (\bar{\rho} \bar{h}) \in L^{p \wedge q}(B, \mathbb{R}^d)$. Since $\bar{\rho} \bar{h} \in L^q(B) \subset L^{p \wedge q}(B)$, we deduce that $\bar{\rho} \bar{h} \in H^{1, p \wedge q}(B)$.
\end{proof}


\section{Conclusions and discussion} \label{condisc}
This paper addresses the local elliptic regularity results for stationary Fokker-Planck equations with general coefficients, demonstrating that when a solution with low regularity is given a priori, its density belongs to $H^{1,2}_{\text{loc}}(\mathbb{R}^d) \cap C(\mathbb{R}^d)$ and, more precisely, to $H^{1,p}_{\text{loc}}(U) \cap C^{0,1-d/p}_{\text{loc}}(U)$ with $p \in (d, \infty)$ (see Theorem \ref{intromainth}). 
The main results in this manuscript are derived using sectorial Dirichlet forms and their corresponding resolvents.
This main results enable us to conduct a qualitative analysis of the invariant measures of solutions to stochastic differential equations (SDEs) and ensures that the invariant measures can be characterized as a solution to divergence-type partial differential equations (PDEs). Consequently, these findings hold significant potential for diverse applications in numerical analysis.\\
A detailed study and calculations are required to establish the counterpart of the results in this paper for the case \( d = 2 \). Throughout the paper, the $VMO$ condition on $A$ has been imposed, and it is necessary to investigate whether this condition can be removed or relaxed. Further research is required to lower the regularity assumptions on the drift vector fields and the zero-order terms. Specifically, further investigation is needed to explore the generalizability of the condition $c \in L^{p}_{\text{loc}}(U)$ in Corollary \ref{finmaincor}. Moreover, relaxing the regularity assumption on the drifts $\tilde{\mathbf{H}} \in L^p_{loc}(U, \mathbb{R}^d)$ 
in Corollary \ref{finmaincor} presents an interesting direction for future research.\\
The existence of solutions to the stationary Fokker-Planck equation, i.e. the existence of infinitesimally invariant measures, is another critical topic. For instance, in \cite[Theorem 1(i)]{BRS12} (cf. \cite[Chapter 2]{BKRS15} and \cite[Section 2.2]{LST22}), the existence of infinitesimally invariant measures was demonstrated using elliptic PDE theory, particularly the weak maximum principle and Harnack inequality. Developing analytical methods to ensure the existence of solutions under various conditions remains a necessary task. Alternatively, the existence of solutions to the stationary Fokker-Planck equation could be investigated stochastically by examining the invariant probability measure or limiting probability measure of the corresponding diffusion process (cf. \cite{K12}). Further studies are also needed to identify conditions under which the solution to the stationary Fokker-Planck equation represents a probability measure, especially in relation to the stationary probability measure of the diffusion process (cf. \cite[Theorem 2]{BRS12} and \cite[Corollary 2.4.2]{BKRS15}).
\\
Based on the local elliptic regularity results presented in this paper, exploring the possibility of establishing new results on the uniqueness of infinitesimally invariant measures is a promising direction for future research. A research dealing with uniqueness of infinitesimally invariant measures analytically is presented in \cite{BRS02} and \cite[Chapter 4]{BKRS15}. In particular, the result on non-uniqueness is referred to in \cite{Sh08}. Notably, in \cite{LT22}, it was demonstrated that the recurrence of the corresponding diffusion process or semigroup implies the uniqueness of the infinitesimally invariant measure. In that work, the local regularity results in \cite{BKR01} for solutions to stationary Fokker-Planck equations played a crucial role. Although the uniqueness problem could be addressed purely analytically, adopting a stochastic approach provides a compelling and rich framework for further investigation.
\centerline{}
\centerline{}
\centerline{}
\noindent
{\bf Acknowledgment.}\;
The author sincerely appreciates the thoughtful comments and valuable feedback provided by the anonymous referees on this paper.

\centerline{}
\centerline{}
Haesung Lee\\
Department of Mathematics and Big Data Science,  \\
Kumoh National Institute of Technology, \\
Gumi, Gyeongsangbuk-do 39177, Republic of Korea, \\
E-mail: fthslt@kumoh.ac.kr, \; fthslt14@gmail.com
\end{document}